\documentclass[12pt]{amsart}
\usepackage{amsmath,amssymb,amsthm}
\usepackage[a4paper,margin=3cm]{geometry}
\usepackage{enumitem}
\usepackage{xcolor}
\usepackage{hyperref}
\hypersetup{
  colorlinks=true,
  linkcolor=cyan,
  citecolor=cyan,
  urlcolor=cyan,
  hypertexnames=false
}
\newtheorem{theorem}{Theorem}
\newtheorem{lemma}[theorem]{Lemma}
\newtheorem{proposition}[theorem]{Proposition}
\newtheorem{corollary}[theorem]{Corollary}
\theoremstyle{definition}
\newtheorem{definition}[theorem]{Definition}
\newtheorem{remark}[theorem]{Remark}
\numberwithin{theorem}{section}
\numberwithin{equation}{section}
\newcommand{\G}{\mathbf G}
\newcommand{\Q}{\mathbb Q}
\newcommand{\Z}{\mathbb Z}
\newcommand{\R}{\mathbb R}
\newcommand{\CC}{\mathbb C}
\newcommand{\PGL}{\mathrm{PGL}}
\newcommand{\PSL}{\mathrm{PSL}}
\newcommand{\GL}{\mathrm{GL}}
\newcommand{\Span}{\mathrm{Span}}
\newcommand{\SL}{\mathrm{SL}}
\newcommand{\Gal}{\operatorname{Gal}}
\newcommand{\Prob}{\operatorname{Prob}}
\newcommand{\Supp}{\operatorname{Supp}}
\newcommand{\Ad}{\operatorname{Ad}}

\title{The $C^*$--ISR Property for $\PSL_n(\Z)$}
\author[Amrutam]{Tattwamasi Amrutam}
\address{Institute of Mathematics of the Polish Academy of Sciences, ul. Sniadeckich 8, 00-656, Warszawa, Poland}
\email{tattwamasiamrutam@gmail.com}
\thanks{The author is supported by National Science Centre, Poland Sonata, grant number 2025/59/D/ST1/03117.}
\date{\today}
\begin{document}
\begin{abstract}
A discrete group $\Gamma$ has the \emph{$C^*$--ISR property} if every unital
$\Gamma$--invariant $C^*$--subalgebra $A\subseteq C_r^*(\Gamma)$ is of the form
$C_r^*(N)$ for a normal subgroup $N\triangleleft\Gamma$. We show that $\PSL_n(\Z)$ satisfies the $C^*$--ISR property for every $n\geq3$.
\end{abstract}
\maketitle
\section{Introduction}
Let $\Gamma$ be a discrete group and let $L(\Gamma)$ be its group von Neumann
algebra. Inspired by Popa's deformation/rigidity theory, substantial progress
has been made on the structural classification of group von Neumann algebras.
One line of inquiry has been to describe the \emph{invariant} von Neumann
subalgebras, namely those $M\subseteq L(\Gamma)$ with $\lambda(g)\,M\,\lambda(g)^*\subseteq M$ for every $g\in\Gamma$.
A group is said to have the \emph{ISR property} if every such $M$ is of the
form $L(N)$ for a normal subgroup $N\triangleleft\Gamma$. Motivated by the works of Alekseev-Brugger~\cite{AB21}, Chifan-Das~\cite{CD20}, and Kalantar--Panagopoulos~\cite{KalantarPanagopoulos}, this direction has
attracted considerable attention (see, for example
\cite{AJ23,JZ24, DJ24, ADJS25, JL26a, JL26b, jiang2026factors, Amr26, amrutam2026relative} and the references therein).
The corresponding question for the reduced $C^*$--algebra is harder, and the
reason is structural. 

In the von Neumann setting, one has
the canonical trace-preserving conditional expectation onto any (invariant)
subalgebra, the Hilbert space $L^2(\Gamma)$ with its Fourier decomposition, and the bimodule techniques (employed by Jiang in \cite{jiang2021maximal}) that come with them. In the $C^*$--setting, none of
these is available. A unital $\Gamma$--invariant $C^*$--subalgebra $A\subseteq
C_r^*(\Gamma)$ need not be the range of any conditional expectation, and the
Fourier series of an element of $C_r^*(\Gamma)$ need not converge in norm.
Every argument has to be rebuilt so that it produces norm estimates directly.

Recall that a discrete group $\Gamma$ has the \emph{$C^*$--ISR property} if every unital
$\Gamma$--invariant $C^*$--subalgebra $A\subseteq C_r^*(\Gamma)$ is of the form
$C_r^*(N)$ for a normal subgroup $N\triangleleft\Gamma$. In the setup of negatively curved groups, the author and Jiang showed in
\cite{AJ26} that the $\Gamma$--invariant unital $C^*$--subalgebras of $C_r^*(\Gamma)$ for negatively curved groups and their free products, come from normal subgroups. On the other hand, Kalantar and Panagopoulos
\cite{KalantarPanagopoulos} proved that for a lattice $\Gamma$ in a semisimple
Lie group with no compact factors, trivial center, and real rank at least two,
every $\Gamma$--invariant $C^*$--subalgebra $A\subseteq C_r^*(\Gamma)$ is of the
form $C_r^*(N)$ \emph{provided that there is an equivariant conditional expectation}
$E:C_r^*(\Gamma)\to A$.
Our main result removes the expectation hypothesis in the higher-rank case for
$\PSL_n(\Z)$.
\begin{theorem}\label{thm:main}
Let $n\geq3$ and let $\Gamma=\PSL_n(\Z)$. Then every unital
$\Gamma$--invariant $C^*$--subalgebra $A\subseteq C_r^*(\Gamma)$ is of the form
$C_r^*(N)$ for a normal subgroup $N\triangleleft\Gamma$. That is, $\Gamma$ has
the $C^*$--ISR property.
\end{theorem}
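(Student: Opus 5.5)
The strategy is to reduce to showing that $A$ is \emph{spanned by the group elements it contains}. Set $N=\{g\in\Gamma:\lambda(g)\in A\}$. Since $A$ is a unital $\Gamma$--invariant $C^*$--algebra, $N$ is a normal subgroup and $C_r^*(N)\subseteq A$. It therefore suffices to prove the following: for every $a\in A$ and every $g$ with Fourier coefficient $\tau(a\lambda(g)^*)\neq0$, we have $\lambda(g)\in A$. Indeed, once all the support of $a$ lies in $N$, the canonical expectation $E_N:C_r^*(\Gamma)\to C_r^*(N)$ fixes $a$, since $E_N$ is continuous and agrees with $a$ on Fourier coefficients. Hence $a\in C_r^*(N)$.

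By Margulis' normal subgroup theorem, each normal subgroup of $\PSL_n(\Z)$ with $n\ge3$ is either trivial (the center is trivial) or of finite index. So there are two cases to rule in. Either $A=\CC1$, or $A$ contains $\lambda(g)$ for some $g\neq e$; in the latter case the normal closure of $g$ has finite index, and we must upgrade this to $A=C_r^*(N)$ with $N$ exactly the set above.

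The key analytic step is to produce group elements inside $A$ from a general $a\in A$ using only norm limits of averages of conjugates. This replaces the conditional expectation used by Kalantar--Panagopoulos. I would use unipotent and diagonalizable dynamics. Take a hyperbolic element $h\in\Gamma$; for instance, take $h$ in a split Cartan subgroup coming from a totally real order, acting on the boundary $G/P$ with north--south dynamics. The plan is to show that norm-averages
\[
\frac1k\sum_{j=1}^k \lambda(s_j)\,a\,\lambda(s_j)^*
\]
over a well-chosen sequence $s_j$ converge in norm to $\tau(a)1$ plus a part supported on elements commuting with the $s_j$. For this I would use a Powers-type averaging argument. $\PSL_n(\Z)$ is $C^*$-simple with the unique trace property, and Haagerup's property RD for $\SL_n(\Z)$ is not available for $n\ge3$. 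Instead I would rely on the \emph{relative} Powers averaging that is available through boundary actions: the Furstenberg boundary $G/P$ and the Kalantar--Kennedy machinery. Concretely, I would show that the $\Gamma$-equivariant u.c.p.\ maps $C_r^*(\Gamma)\to C(G/P)\rtimes_r\Gamma$ that arise send $A$ into a subalgebra determined by stabilizers of points. The stabilizers are parabolic subgroups $P\cap\Gamma$, and higher rank rigidity (Margulis factor theorem / Nevo--Zimmer intermediate factors) then forces the intermediate algebra to come from a normal subgroup. Applied to $a\lambda(g)^*$ and its conjugates, this isolates the coefficient at $g$ and yields $\lambda(g)\in A$ whenever that coefficient is nonzero.

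The hard step is exactly this norm isolation of a single Fourier coefficient without a conditional expectation onto $A$. The natural averages converge in norm only for Powers-type elements, whereas a general $a$ has coefficients spread along infinite conjugacy-class-like sets. My first attempt would be to approximate $a$ in norm by finitely supported elements. I would then use the fact that, for a finite set $F\subset\Gamma\setminus\{e\}$, one can choose $s\in\Gamma$ such that $sFs^{-1}$ is paired with $F$ in a ping-pong configuration on $G/P$. Iterating this gives the norm estimate $\|\frac1k\sum_j\lambda(s_j)x\lambda(s_j)^*\|\le C k^{-1/2}\|x\|$ uniformly for $x$ supported on $F$. The uniformity in $\|x\|$, rather than in $\ell^2$ norm, is where a genuine $C^*$ estimate is needed, and I expect this to be the main obstacle. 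If the direct estimate fails, the fallback is to pass through Kennedy's intrinsic characterization of $C^*$-simplicity together with the trivial amenable radical.
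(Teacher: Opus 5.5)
There is a genuine gap. Your reduction is fine: $N=\{g:\lambda(g)\in A\}$ is normal, $C_r^*(N)\subseteq A$, and $\Supp(a)\subseteq N$ forces $a=E_N(a)$. But the proposal never gives a mechanism that puts even one nontrivial group unitary into $A$, and that is the whole content of the theorem.

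\begin{itemize}
\item The dichotomy ``either $A=\CC1$ or $\lambda(g)\in A$ for some $g\neq e$'' does not follow from Margulis' normal subgroup theorem. Margulis classifies $N$, but nothing you say rules out $N=\{e\}$ with $A\neq\CC1$. Ruling that out is exactly Step-1 of the paper.
\item The Powers-type estimate $\bigl\|\frac1k\sum_j\lambda(s_j)x\lambda(s_j)^*\bigr\|\le Ck^{-1/2}\|x\|$ for $x$ supported off $e$ only shows that averages of $a\in A$ tend to $\tau_0(a)1$. That scalar is already in $A$, so you learn nothing.
\item Averaging $a\lambda(g)^*$ is circular, since that element is only known to lie in $A$ once $\lambda(g)\in A$.
\item The appeal to the Furstenberg boundary, point stabilizers and the Margulis factor / Nevo--Zimmer theorems is the Kalantar--Panagopoulos argument. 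That argument is precisely the one that needs an equivariant conditional expectation onto $A$, and you do not say how to run it without one.
\end{itemize}

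The relative averaging you mention (a limit supported on elements commuting with the $s_j$) is the right first tool. The paper realizes it as $E_{C_\Gamma(p)}(A)\subseteq A$ for $\Q$--generic, $\R$--regular $p$, using the finite attracting set $K(p)$ of eigenlines in $\mathbb P^{n-1}(\R)$. But $C_\Gamma(p)$ is abelian of rank $n-1$, so this alone still yields no single unitary. Three ideas are missing.
\begin{enumerate}
\item Use the von Neumann ISR theorem of Kalantar--Panagopoulos together with Margulis to get $A''=L(N_0)$ with $[\Gamma:N_0]<\infty$. Then, by Kaplansky, $\lambda(p)$ and $\lambda(s^m)$ are $\|\cdot\|_2$--limits of contractions in $A$.
\item Choose $p,q\in N_0$ that are $\Q$--generic and $\R$--regular with $q=ps^m$, where $s$ is the transvection $I_n+E_{12}$. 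Then $C_\Gamma(p)$, $C_\Gamma(q)$, $C_\Gamma(s)$ intersect pairwise trivially, and $s^{-k}ts^k$ escapes $C_\Gamma(q)$ for $t\notin C_\Gamma(s)$. This gives $E_{C_\Gamma(q)}\bigl(b\lambda(s^{-k})d\lambda(s^k)\bigr)\to E_{C_\Gamma(q)}\bigl(bE_{C_\Gamma(s)}(d)\bigr)$ in norm.
\item Apply the matched Fourier-series estimate. It converts $\|\cdot\|_2$--closeness of $b\in C_r^*(C_\Gamma(p))$ to $\lambda(p)$, and of $E_{C_\Gamma(s)}(d)$ to $\lambda(s^m)$, into operator-norm closeness of that limit to $\lambda(q)$.
\end{enumerate}

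Finally, you do not need to isolate every Fourier coefficient of every $a\in A$. Once a single $\lambda(q)\neq1$ lies in $A$, its normal closure has finite index. The Galois correspondence for twisted crossed products by the finite quotient (Cameron--Smith, B\'edos) then identifies $A$ with some $C_r^*(K)$; this uses simplicity of $C_r^*(N)$ and outerness of the induced action.
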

Since $C_r^*(N)$ is the range of the canonical conditional expectation
$E_N:C_r^*(\Gamma)\to C_r^*(N)$, the hypothesis of
\cite{KalantarPanagopoulos} is not an assumption in our setting, but a
conclusion. The lack
of a conditional expectation means that the von Neumann techniques cannot be
transplanted directly, so, as in \cite{AJ26}, we use averaging to control the behaviour of individual elements of $A$, and, as has been the case since
\cite{BekkaCowlingHarpe, haagerup2015new}, the averaging comes from dynamics --- here, from
the action of $\PSL_n(\Z)$ on the projective space $\mathbb P^{n-1}(\R)$. We discuss the proof strategy in detail in Section~\ref{sec:gen-strategy}.
\section{General strategy}\label{sec:gen-strategy}
In \cite{AJ26}, the first step used the boundary action
$\Gamma\curvearrowright\partial\Gamma$, which for the groups considered there
is $2$--filling in the sense of \cite{Jolissaint}.
Two features of that situation made the argument work. The averaging
$\Ad(\lambda(t^{-k}))$ along a single element $t$ converged to the conditional
expectation $E_{C_r^*(C_\Gamma(t))}$, giving
$E_{C_r^*(C_\Gamma(t))}(A)\subseteq A$ for every primitive $s$. This was followed by a comparison inside a free subgroup (see \cite[Lemma~2.3]{AJ26}), which then isolated a group unitary. Both features
fail for $\PSL_n(\Z)$, and it is worth being precise about how.

First, the action of $\Gamma=\PSL_n(\Z)$ on $\mathbb P^{n-1}(\R)$ is
$n$--filling and not $2$--filling for $n\geq3$. A single pair of points does
not have $\Gamma$--translates covering the space, so the averaging cannot be
driven by two points at a time. Second, and more seriously, not every element of $\Gamma$ has north--south
dynamics on $\mathbb P^{n-1}(\R)$; in the language of Subsection~\S\ref{ssec:proximal},
not every element is proximal.
Third, even when averaging is available, the free-group comparison does not
get off the ground. For an element $p$ that our averaging \emph{can} control,
the centralizer is a maximal torus of rank $n-1$, so that $C_r^*\bigl(C_\Gamma(p)\bigr)\cong C(\mathbb T^{\,n-1})$, a commutative algebra of dimension $n-1$. There is nothing to compare against. As such, the heart of the matter boils down to the following abstract proposition.

We write $E_H$ for the canonical conditional expectation of $C_r^*(\Gamma)$ onto
$C_r^*(H)$. It is characterized on the canonical
unitaries by $E_H(\lambda(t))=\lambda(t)$ for $t\in H$ and
$E_H(\lambda(t))=0$ for $t\notin H$.
\begin{proposition}\label{prop:strategy-main}
Let $p,q,s\in\Gamma$ be such that the following hold true.
\begin{enumerate}
    \item[(A)] $q=ps^m$ for some $m\geq1$.
    \item[(B)] $C_\Gamma(p)\cap C_\Gamma(q)
 =C_\Gamma(q)\cap C_\Gamma(s)
 =C_\Gamma(p)\cap C_\Gamma(s)=\{e\}$.
 \item[(C)] For every $g\in C_\Gamma(p)$ and every
$t\in\Gamma\setminus C_\Gamma(s)$, one has $g\,s^{-k}t\,s^k\in C_\Gamma(q)$
for only finitely many integers $k\geq0$.
\end{enumerate}
Then for every $b\in C_r^*(C_\Gamma(p))$ and every $a\in C_r^*(\Gamma)$, $E_{C_\Gamma(q)}\bigl(b\,\lambda(s^{-k})\,a\,\lambda(s^{k})\bigr)
 \xrightarrow[\ k\to\infty\ ]{\ \|\cdot\|\ }
 E_{C_\Gamma(q)}\bigl(b\,E_{C_\Gamma(s)}(a)\bigr)$.
\end{proposition}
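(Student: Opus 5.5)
The plan is a density argument: prove the convergence on group unitaries, where it is exact after finitely many steps, and then extend by linearity and uniform boundedness. Only hypothesis (C) should be needed; (A) and (B) presumably enter later, when the proposition is applied.

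For $k\ge 0$ define bilinear maps
\[
\Phi_k(b,a)=E_{C_\Gamma(q)}\bigl(b\,\lambda(s^{-k})\,a\,\lambda(s^{k})\bigr),\qquad
\Phi(b,a)=E_{C_\Gamma(q)}\bigl(b\,E_{C_\Gamma(s)}(a)\bigr),
\]
for $b\in C_r^*(C_\Gamma(p))$ and $a\in C_r^*(\Gamma)$. The canonical conditional expectations are contractive and $\Ad(\lambda(s^{-k}))$ is isometric. Hence $\|\Phi_k(b,a)\|\le\|b\|\,\|a\|$ for every $k$, and likewise $\|\Phi(b,a)\|\le\|b\|\,\|a\|$. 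This uniform bound in $k$ is what makes the reduction to generators legitimate.

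\textbf{Step 1: generators.} Take $b=\lambda(g)$ with $g\in C_\Gamma(p)$ and $a=\lambda(t)$ with $t\in\Gamma$, and split into two cases.
\begin{itemize}
\item If $t\in C_\Gamma(s)$, then $s^{-k}ts^k=t$, so $\Phi_k(b,a)=E_{C_\Gamma(q)}(\lambda(gt))=\Phi(b,a)$ for every $k$.
\item If $t\notin C_\Gamma(s)$, then $E_{C_\Gamma(s)}(\lambda(t))=0$, so $\Phi(b,a)=0$. Meanwhile $\Phi_k(b,a)=E_{C_\Gamma(q)}(\lambda(g\,s^{-k}ts^k))$, which is nonzero exactly when $g\,s^{-k}ts^k\in C_\Gamma(q)$. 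By (C) this happens for only finitely many $k$, so $\Phi_k(b,a)=0$ for all large $k$.
\end{itemize}
By bilinearity, $\Phi_k(b,a)\to\Phi(b,a)$ in norm whenever $b\in\CC[C_\Gamma(p)]$ and $a\in\CC[\Gamma]$ are finite linear combinations. Indeed, each of the finitely many pairs of group elements involved stabilizes after finitely many $k$.

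\textbf{Step 2: density.} Let $b$ and $a$ be general and fix $\varepsilon>0$. Choose $b'\in\CC[C_\Gamma(p)]$ and $a'\in\CC[\Gamma]$ with $\|b-b'\|<\varepsilon$, $\|a-a'\|<\varepsilon$ and $\|b'\|\le\|b\|+1$. Using bilinearity and the uniform bound,
\[
\|\Phi_k(b,a)-\Phi_k(b',a')\|\le\|b-b'\|\,\|a\|+\|b'\|\,\|a-a'\|\le\varepsilon\bigl(\|a\|+\|b\|+1\bigr),
\]
and the same estimate holds for $\Phi$. An $\varepsilon/3$ argument combined with Step 1 then gives $\limsup_k\|\Phi_k(b,a)-\Phi(b,a)\|\le 2\varepsilon(\|a\|+\|b\|+1)$. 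Since $\varepsilon$ is arbitrary, the limit is zero.

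\textbf{Main obstacle.} There is no serious obstacle. The only point needing care is the one flagged in the introduction: Fourier series need not converge in norm. For that reason the argument must go through norm-dense finite sums and the uniform bound $\|\Phi_k\|\le 1$, rather than expanding $a$ coefficientwise.
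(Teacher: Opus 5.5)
Your proposal is correct and follows essentially the same route as the paper. You approximate $b$ and $a$ by finite sums of group unitaries, use (C) to make the statement exact for those sums once $k$ is large, and control the approximation error by contractivity of the conditional expectations and the isometry $\Ad(\lambda(s^{-k}))$; your bilinear bound packages the paper's explicit four-term decomposition, and the paper's renormalization of the identity coefficient is not needed. You are also right that (A) and (B) play no role here; they are used only in Lemma~\ref{lem:fourier-matching} and in the main theorem.
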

The proof rests on a Fourier expansion, and it deserves emphasis. In general, the Fourier series of an element
$a\in C_r^*(\Gamma)$ does not converge in norm. We need the approximation property for the Fourier series to converge, and the higher rank lattices do not have that by a fundamental result of Lafforgue and de la Salle~\cite{lafforgue2011noncommutative}. However, if
$b\in C_r^*(C_\Gamma(p))$ and $x\in C_r^*(C_\Gamma(s))$ with
$C_\Gamma(p)\cap C_\Gamma(s)=\{e\}$, and if $q=ps^m$, then the surviving part of the product does converge. We isolate this as the following lemma.
\begin{lemma}\label{lem:fourier-matching}
Let $p,q,s\in\Gamma$ be such that the assumptions of Proposition~\ref{prop:strategy-main} hold, and let
$ \Sigma=\bigl\{(c,h)\in C_\Gamma(p)\times C_\Gamma(s):ch\in C_\Gamma(q)\bigr\}$.
Then the following is true.
\begin{enumerate}
\item[(a)] If $b\in C_r^*(C_\Gamma(p))$ and
$x\in C_r^*(C_\Gamma(s))$, then we can write
$E_{C_\Gamma(q)}(bx)
 =\sum_{(c,h)\in\Sigma}
 \tau_0(b\lambda(c)^*)\tau_0(x\lambda(h)^*)\lambda(ch)$,
and this series converges in norm.
\item[(b)]  Moreover,
\[
\begin{aligned}
&\left\|E_{C_\Gamma(q)}(bx)-
 \tau_0(b\lambda(p)^*)\tau_0(x\lambda(s^m)^*)\lambda(q)\right\|\\
&\quad\leq
 \left\|b-\tau_0(b\lambda(p)^*)\lambda(p)\right\|_2
 \left\|x-\tau_0(x\lambda(s^m)^*)\lambda(s^m)\right\|_2.
\end{aligned}
\]
\end{enumerate}
\end{lemma}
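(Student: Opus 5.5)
The plan is to show that $\Sigma$ is the graph of a partial bijection between $C_\Gamma(p)$ and $C_\Gamma(s)$. Once that is established, every series in sight is dominated termwise by a Cauchy--Schwarz pairing of $\ell^2$ Fourier coefficients. Throughout, write $b_c=\tau_0(b\lambda(c)^*)$ and $x_h=\tau_0(x\lambda(h)^*)$. These are the Fourier coefficients, and $\sum_c|b_c|^2=\|b\|_2^2\le\|b\|^2$, and similarly for $x$.

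\emph{Step 1 (combinatorics of $\Sigma$).} Only hypothesis (B) is needed here.
\begin{itemize}
\item The multiplication map $C_\Gamma(p)\times C_\Gamma(s)\to\Gamma$, $(c,h)\mapsto ch$, is injective. Indeed, $ch=c'h'$ gives $c'^{-1}c=h'h^{-1}\in C_\Gamma(p)\cap C_\Gamma(s)=\{e\}$.
\item For fixed $c$ there is at most one $h$ with $(c,h)\in\Sigma$. If $ch,ch'\in C_\Gamma(q)$, then $h^{-1}h'=(ch)^{-1}(ch')\in C_\Gamma(q)\cap C_\Gamma(s)=\{e\}$.
\item Symmetrically, if $ch,c'h\in C_\Gamma(q)$, then $c c'^{-1}=(ch)(c'h)^{-1}\in C_\Gamma(q)\cap C_\Gamma(p)=\{e\}$.
\end{itemize}
Thus $\Sigma$ is the graph of an injective partial map $\phi$ from $C_\Gamma(p)$ to $C_\Gamma(s)$. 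By (A), $(p,s^m)\in\Sigma$, since $p\in C_\Gamma(p)$, $s^m\in C_\Gamma(s)$ and $ps^m=q\in C_\Gamma(q)$.

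\emph{Step 2 (finitely supported case and the key bound).} Take $b,x$ in the group algebras $\CC[C_\Gamma(p)]$ and $\CC[C_\Gamma(s)]$. Then $bx=\sum_{c,h}b_cx_h\lambda(ch)$, and by injectivity of $(c,h)\mapsto ch$ the elements $ch$ are pairwise distinct. Hence $E_{C_\Gamma(q)}(bx)=\sum_{(c,h)\in\Sigma}b_cx_h\lambda(ch)$ exactly.

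For any coefficient families I would use the estimate
\[
\Bigl\|\sum_{(c,h)\in\Sigma'}b_cx_h\lambda(ch)\Bigr\|\le\sum_{(c,h)\in\Sigma'}|b_c||x_{h}|=\sum_{c}|b_c||x_{\phi(c)}|\le\|b\|_2\|x\|_2
\]
for every $\Sigma'\subseteq\Sigma$. The last inequality is Cauchy--Schwarz, valid because $\phi$ is injective. So the bilinear map $B(b,x)=\sum_{\Sigma}b_cx_h\lambda(ch)$ converges absolutely in norm for arbitrary $b\in C_r^*(C_\Gamma(p))$ and $x\in C_r^*(C_\Gamma(s))$, and it satisfies $\|B(b,x)\|\le\|b\|_2\|x\|_2\le\|b\|\|x\|$.

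\emph{Step 3 (extension by continuity).} Both $(b,x)\mapsto E_{C_\Gamma(q)}(bx)$ and $B$ are jointly norm-continuous bounded bilinear maps. They agree on the dense group algebras, so they agree everywhere, which proves (a). I expect this step to be the main point to handle carefully, since the Fourier series of $b$ or $x$ themselves need not converge in norm. The argument avoids that issue precisely because it only ever uses the absolutely convergent series over $\Sigma$. One only needs that the coefficient functionals $\tau_0(\,\cdot\,\lambda(g)^*)$ are norm-continuous, which they are.

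\emph{Step 4 (part (b)).} Subtract the term indexed by $(p,s^m)$. Every other $(c,h)\in\Sigma$ has $c\neq p$ and $h\neq s^m$, because $\phi$ is a partial bijection. Applying the Cauchy--Schwarz bound of Step 2 to $\Sigma'=\Sigma\setminus\{(p,s^m)\}$ therefore gives
\[
\sum_{c\neq p}|b_c||x_{\phi(c)}|\le\Bigl(\sum_{c\ne p}|b_c|^2\Bigr)^{1/2}\Bigl(\sum_{h\ne s^m}|x_h|^2\Bigr)^{1/2}.
\]
The right-hand side equals $\|b-b_p\lambda(p)\|_2\,\|x-x_{s^m}\lambda(s^m)\|_2$, which is the bound claimed in (b).
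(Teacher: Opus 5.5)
Your proof is correct and follows the paper's argument: the same three injectivity facts from (B), the same Cauchy--Schwarz domination of the matched series over $\Sigma$ by $\|b\|_2\|x\|_2$, and the same removal of the $(p,s^m)$ term for (b). The one difference is how you identify the norm limit in (a). You verify the identity on $\CC[C_\Gamma(p)]\times\CC[C_\Gamma(s)]$ and extend by joint continuity, using the a priori bound $\|B(b,x)\|\le\|b\|\,\|x\|$; the paper instead compares the Fourier coefficients of $E_{C_\Gamma(q)}(bx)$ and of the series via the convolution formula, which implicitly uses faithfulness of $\tau_0$. Both are valid, and yours is slightly more self-contained.
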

\begin{proof}
By our assumption, the three groups $C_\Gamma(p)$,
$C_\Gamma(q)$ and $C_\Gamma(s)$ have pairwise trivial intersections.
Now, suppose that
$(c,h_1),(c,h_2)\in\Sigma$. Then $(ch_1)^{-1}(ch_2)=h_1^{-1}h_2$ belongs to
$C_\Gamma(q)\cap C_\Gamma(s)=\{e\}$. Hence $h_1=h_2$. If $(c_1,h),(c_2,h)\in\Sigma$, then
$(c_1h)(c_2h)^{-1}=c_1c_2^{-1}$ belongs to
$C_\Gamma(q)\cap C_\Gamma(p)=\{e\}$. Hence $c_1=c_2$.
We shall also use the injectivity of multiplication on
$C_\Gamma(p)\times C_\Gamma(s)$. If $c_1h_1=c_2h_2$, then
$c_2^{-1}c_1=h_2h_1^{-1}$ belongs to
$C_\Gamma(p)\cap C_\Gamma(s)=\{e\}$. Thus $c_1=c_2$ and $h_1=h_2$.
For $g\in\Gamma$, the Fourier coefficient of the product is
\[
 \tau_0(bx\lambda(g)^*)
 =\sum_{c\in C_\Gamma(p)}
 \tau_0(b\lambda(c)^*)
 \tau_0(x\lambda(c^{-1}g)^*).
\]
The scalar series is absolutely convergent by Cauchy--Schwarz. A summand
can be nonzero only when $c^{-1}g\in C_\Gamma(s)$, or equivalently when
$g=ch$ with $c\in C_\Gamma(p)$ and $h\in C_\Gamma(s)$. The injectivity of
multiplication shows that there is at most one such pair. After applying
$E_{C_\Gamma(q)}$, only the pairs in $\Sigma$ remain.
The full Fourier series of $b$ and $x$ need not converge in operator norm.
However, injectivity of the two coordinate projections and Cauchy--Schwarz inequality
give
\[
\begin{aligned}
&\sum_{(c,h)\in\Sigma}
 \left|\tau_0(b\lambda(c)^*)\tau_0(x\lambda(h)^*)\right|\\
&\quad\leq
 \left(\sum_{c\in C_\Gamma(p)}|\tau_0(b\lambda(c)^*)|^2\right)^{1/2}
 \left(\sum_{h\in C_\Gamma(s)}|\tau_0(x\lambda(h)^*)|^2\right)^{1/2}
 =\|b\|_2\|x\|_2.
\end{aligned}
\]
Since every $\lambda(ch)$ has norm one, this proves operator-norm
convergence of the matched subseries. The claim that it converges to
$E_{C_\Gamma(q)}(bx)$ follows from the $\|\cdot\|_2$--norm computation
above.
The pair $(p,s^m)$ belongs to $\Sigma$, because $ps^m=q$. Remove this
pair. If a remaining pair had $c=p$, injectivity on the first coordinate would force
$h=s^m$. Similarly, if it had $h=s^m$, injectivity of the second coordinate would force
$c=p$. Thus, every remaining pair satisfies both $c\neq p$ and
$h\neq s^m$. Using the triangle inequality, the two injective coordinate
projections once more, and Cauchy--Schwarz, we see that
\[
\begin{aligned}
&\left\|E_{C_\Gamma(q)}(bx)-
 \tau_0(b\lambda(p)^*)\tau_0(x\lambda(s^m)^*)\lambda(q)\right\|\\
&\quad\leq
 \left(\sum_{c\neq p}|\tau_0(b\lambda(c)^*)|^2\right)^{1/2}
 \left(\sum_{h\neq s^m}|\tau_0(x\lambda(h)^*)|^2\right)^{1/2}\\
&\quad=\|b-\tau_0(b\lambda(p)^*)\lambda(p)\|_2\|x-\tau_0(x\lambda(s^m)^*)\lambda(s^m)\|_2.
\end{aligned}
\]
This proves the estimate.
\end{proof}
\begin{proof}[Proof of Proposition~\ref{prop:strategy-main}]
Let $b\in C_r^*(C_\Gamma(p))$, let $a\in C_r^*(\Gamma)$, and fix
$\varepsilon>0$. Put
$M=\max\{\|a\|+1,\|b\|+1\}$. Choose finite Fourier sums $b_0$ and
$a_0$ such that $\|b-b_0\|<\varepsilon/(8M)$ and
$\|a-a_0\|<\varepsilon/(8M)$. Replace the coefficient of the identity in $b_0$ by $\tau_0(b)$; since
$\tau_0$ has norm one, that coefficient moves by at most $\|b-b_0\|$, so the
new finite Fourier sum is still within $\varepsilon/(4M)$ of $b$. Do the same
for $a_0$. We therefore obtain finite sets
$F_b\subseteq C_\Gamma(p)\setminus\{e\}$ and
$F_a\subseteq\Gamma\setminus\{e\}$, together with scalars
$\beta_g$ and $\alpha_h$, such that
\begin{equation}\label{eq:approx-b}
 \left\|b-\left(\sum_{g\in F_b}\beta_g\lambda(g)+\tau_0(b)1\right)\right\|
 <\frac{\varepsilon}{4M}
\end{equation}
and
\begin{equation}\label{eq:approx-a}
 \left\|a-\left(\sum_{h\in F_a}\alpha_h\lambda(h)+\tau_0(a)1\right)\right\|
 <\frac{\varepsilon}{4M}.
\end{equation}
For every $g\in F_b\cup\{e\}$ and every
$h\in F_a\setminus C_\Gamma(s)$, hypothesis (C) applies, because
$F_b\cup\{e\}\subseteq C_\Gamma(p)$, and yields an integer $k_0(g,h)$ such that
$gs^{-k}hs^k\notin C_\Gamma(q)$ whenever $k\geq k_0(g,h)$. If there are no such pairs, put $k_0=0$. Otherwise, let $k_0$ be
the maximum of these finitely many integers. Then, for every $k\geq k_0$,
\begin{equation}\label{eq:escape}
 gs^{-k}hs^k\notin C_\Gamma(q)
 \quad
 \text{for all }g\in F_b\cup\{e\},\
 h\in F_a\setminus C_\Gamma(s).
\end{equation}
The terms of the finite approximation to $a$ that belong to
$C_\Gamma(s)$ commute with $s$. The terms outside $C_\Gamma(s)$ are
annihilated by $E_{C_\Gamma(q)}$ for $k\geq k_0$, by equation~\eqref{eq:escape}. Hence, for
such $k$, adding and subtracting the finite approximations gives
\begin{equation}\label{eq:decomposition}
\begin{aligned}
&E_{C_\Gamma(q)}\bigl(b\lambda(s^{-k})a\lambda(s^k)\bigr)
 -E_{C_\Gamma(q)}\bigl(bE_{C_\Gamma(s)}(a)\bigr)\\
&=E_{C_\Gamma(q)}\left(
 \left[b-\left(\sum_{g\in F_b}\beta_g\lambda(g)+\tau_0(b)1\right)\right]
 \lambda(s^{-k})a\lambda(s^k)\right)\\
&\quad+E_{C_\Gamma(q)}\left(
 \left(\sum_{g\in F_b}\beta_g\lambda(g)+\tau_0(b)1\right)
 \lambda(s^{-k})
 \left[a-\left(\sum_{h\in F_a}\alpha_h\lambda(h)+\tau_0(a)1\right)\right]
 \lambda(s^k)\right)\\
&\quad+E_{C_\Gamma(q)}\left(
 \left[\left(\sum_{g\in F_b}\beta_g\lambda(g)+\tau_0(b)1\right)-b\right]
 \left(\sum_{h\in F_a\cap C_\Gamma(s)}\alpha_h\lambda(h)+\tau_0(a)1\right)
 \right)\\
&\quad+E_{C_\Gamma(q)}\left(
 b\left[\left(\sum_{h\in F_a\cap C_\Gamma(s)}\alpha_h\lambda(h)+\tau_0(a)1\right)
 -E_{C_\Gamma(s)}(a)\right]\right).
\end{aligned}
\end{equation}
Applying $E_{C_\Gamma(s)}$ to equation~\eqref{eq:approx-a} and using contractivity gives
\[
 \left\|
 \left(\sum_{h\in F_a\cap C_\Gamma(s)}\alpha_h\lambda(h)+\tau_0(a)1\right)
 -E_{C_\Gamma(s)}(a)
 \right\|<\frac{\varepsilon}{4M}.
\]
Consequently,
\[
 \left\|\sum_{h\in F_a\cap C_\Gamma(s)}\alpha_h\lambda(h)+\tau_0(a)1\right\|
 <\|a\|+1.
\]
Similarly, equation~\eqref{eq:approx-b} gives
\[
 \left\|\sum_{g\in F_b}\beta_g\lambda(g)+\tau_0(b)1\right\|
 <\|b\|+1.
\]
The four terms on the right-hand side of equation~\eqref{eq:decomposition} are therefore bounded,
respectively, by
 $\frac{\varepsilon}{4M}\|a\|$,
 $(\|b\|+1)\frac{\varepsilon}{4M}$,
 $\frac{\varepsilon}{4M}(\|a\|+1)$, and
 $\|b\|\frac{\varepsilon}{4M}$.
Since $M=\max\{\|a\|+1,\|b\|+1\}$, each of these four quantities is at
most $\varepsilon/4$. Thus, for every $k\geq k_0$,
\[
 \left\|E_{C_\Gamma(q)}\bigl(b\lambda(s^{-k})a\lambda(s^k)\bigr)
 -E_{C_\Gamma(q)}\bigl(bE_{C_\Gamma(s)}(a)\bigr)\right\|<\varepsilon.
\]
This proves the norm convergence.
\end{proof}
It remains to produce $p$, $q$ and $s$. The unipotent $s$ is fixed once and
for all as the image of $I_n+E_{12}$.
Section~\ref{sec:notandprelim} develops the geometry and produces the elements.
One further difference from \cite{AJ26} is worth recording. There, the proof of
the $C^*$--ISR property produced a new proof of the ISR property as a
by-product. Here the logic runs the other way. Our argument uses at the outset
that $\PSL_n(\Z)$ already satisfies the ISR property, through
\cite{KalantarPanagopoulos}, to know that $A''=L(N)$ for a normal subgroup $N$. We use this to isolate an element
$g\in\Gamma\setminus\{e\}$ with $\lambda(g)\in A$.
Let $N$ be the normal closure
of $g$. By Margulis's normal subgroup theorem, $N$ has finite index, so
$C_r^*(N)\subseteq A\subseteq C_r^*(\Gamma)$ with $\Gamma/N$ finite, and a
Galois correspondence for reduced crossed products by finite groups
\cite{CameronSmithGalois,Bedos1991} identifies the intermediate algebras with
the intermediate subgroups. Hence $A=C_r^*(K)$ for some $N\leq K\leq\Gamma$.
\section{Notation and Preliminaries}
\label{sec:notandprelim}
\subsection{Notation}\label{ssec:notation}
Fix $n\geq3$ and put $\Gamma=\PSL_n(\Z)$. We write
$\operatorname{pr}:\SL_n(\Z)\to\Gamma$ for the quotient map. We also write
$\G=\PGL_{n/\Q}$. Thus, for every field extension $F/\Q$,
$\G(F)=\PGL_n(F)$, and we identify $\Gamma$ with a subgroup of $\G(\Q)$.
Lower-case letters denote elements of $\Gamma$, and the corresponding
upper-case letters denote chosen lifts in $\SL_n(\Z)$. Thus, if
$g\in\Gamma$, then $G\in\SL_n(\Z)$ denotes a lift of $g$.
We write $\lambda:\Gamma\to\mathcal U(\ell^2\Gamma)$ for the left regular
representation and $\tau_0$ for the canonical trace on $C_r^*(\Gamma)$. For
$a\in C_r^*(\Gamma)$, put
$\Supp(a)=\{t\in\Gamma:\tau_0(a\lambda(t)^*)\neq0\}$, and, for a
$C^*$--subalgebra $A$, put $\Supp(A)=\bigcup_{a\in A}\Supp(a)$. For $g\in\Gamma$, write
$C_\Gamma(g)=\{t\in\Gamma:tg=gt\}$. Its algebraic centralizer is denoted by
$C_\G(g)$; thus $C_\G(g)(F)=\{x\in\G(F):xg=gx\}$ for a field extension
$F/\Q$.
\subsection{Proximal endomorphisms}\label{ssec:proximal}
Let $V$ be a finite-dimensional real vector space. Following
\cite[\S4.1, p.~51--52]{BenoistQuint}, a nonzero $f\in\operatorname{End}(V)$
is \emph{proximal} if $f$ has a unique eigenvalue of maximal absolute value
and this eigenvalue is a simple root of the characteristic polynomial. Both
the eigenvalue and its eigenline are then defined over $\R$. We write
$V_f^+\subseteq V$ for that eigenline, the \emph{attracting line}, and
$V_f^<\subseteq V$ for the unique $f$--stable hyperplane with
$V_f^+\not\subseteq V_f^<$; concretely, $V_f^<$ is the sum of the generalized
eigenspaces of the remaining eigenvalues.

Equivalently
\cite[\S4.1]{BenoistQuint}, $f$ is proximal exactly when the induced map of
$\mathbb P(V)$ has an attracting fixed point, which is then $V_f^+$.
Scaling $f$ scales all its eigenvalues by the same factor and changes neither
$V_f^+$ nor $V_f^<$. So it makes sense to call an element of $\PGL(V)$
proximal, and to write $V_a^+$ and $V_a^<$ for it, when one, equivalently
every, lift is proximal.
If $v\in V$ and $\varphi\in V^*$, write $v\otimes\varphi$ for the rank-one
operator $w\mapsto\varphi(w)v$. Its kernel is $\ker\varphi$ and its image is
$\R v$. Its only possible nonzero eigenvalue is $\varphi(v)$. Indeed, if
$(v\otimes\varphi)(w)=\mu w$ with $\mu\neq0$ then $\varphi(w)v=\mu w$, so
$w$ is a nonzero multiple of $v$, and applying $\varphi$ to
$\varphi(w)v=\mu w$ gives $\varphi(w)\varphi(v)=\mu\varphi(w)$ with
$\varphi(w)\neq0$, whence $\mu=\varphi(v)$.
The next two lemmas prepare the only statement of this subsection that is used
later, namely Lemma~\ref{lem:proximal-product}. The first says that a proximal
matrix, suitably renormalized, converges to a rank-one operator; the second is
the form of the continuity of roots that we shall need.
\begin{lemma}\label{lem:rank-one-limit}
Let $A\in\GL(V)$ be proximal, let $\lambda_+$ be its eigenvalue of maximal
absolute value and let $v_+$ span $V_A^+$. Let $\varphi_+\in V^*$ be the
linear functional with $\varphi_+(v_+)=1$ and $\ker\varphi_+=V_A^<$. Then
 $\lambda_+^{-k}A^k\longrightarrow v_+\otimes\varphi_+$
in operator norm as $k\to\infty$.
\end{lemma}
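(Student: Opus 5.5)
The plan is to decompose $V$ into the two $A$--invariant pieces $V=V_A^+\oplus V_A^<$ and treat $A^k$ separately on each. By definition, $V_A^+=\R v_+$ is the eigenline for $\lambda_+$, and $V_A^<$ is the sum of the generalized eigenspaces of the remaining eigenvalues. Both are $A$--invariant and complementary, since $\lambda_+$ is a simple root of the characteristic polynomial. Let $P=v_+\otimes\varphi_+$. Then $P$ is precisely the projection onto $V_A^+$ along $V_A^<$: indeed $Pv_+=v_+$, and $P$ vanishes on $\ker\varphi_+=V_A^<$. Because both summands are $A$--invariant, $A$ commutes with $P$. Hence
\[
\lambda_+^{-k}A^k=\lambda_+^{-k}A^kP+\lambda_+^{-k}A^k(I-P)=P+\lambda_+^{-k}A^k(I-P),
\]
using $A^kP=\lambda_+^kP$, which holds because $Av_+=\lambda_+v_+$. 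It therefore suffices to show that $\lambda_+^{-k}A^k|_{V_A^<}\to0$ in operator norm. Since $I-P$ is a fixed bounded operator, this gives the claim.

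For that remaining step, let $B=A|_{V_A^<}$. Its eigenvalues are the remaining eigenvalues of $A$, counted over $\CC$ after complexifying if needed, and by proximality each has absolute value strictly less than $|\lambda_+|$. So the spectral radius satisfies $\rho(B)<|\lambda_+|$. Choose $r$ with $\rho(B)<r<|\lambda_+|$. By Gelfand's spectral radius formula, $\|B^k\|^{1/k}\to\rho(B)$, so $\|B^k\|\le r^k$ for all large $k$. Consequently $\|\lambda_+^{-k}B^k\|\le (r/|\lambda_+|)^k\to0$. Alternatively, one can avoid Gelfand's formula by putting $B$ in Jordan form over $\CC$ and bounding the entries of $J^k$ by $C k^{d}\rho(B)^k$; the polynomial factor is beaten by the geometric decay $(\rho(B)/|\lambda_+|)^k$.

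The only point needing care, rather than a genuine obstacle, is the invariance and complementarity of $V_A^<$. This is what makes $P$ commute with $A$, and it is exactly where the simplicity of $\lambda_+$ as a root is used: without simplicity, the generalized eigenspace of $\lambda_+$ could contain a Jordan block and the limit would not exist. We also use that $\lambda_+$ is real, so that $v_+$ and $\varphi_+$ live over $\R$. This was noted in the definition. Finally, invertibility of $A$ guarantees $\lambda_+\neq0$, so the normalization is legitimate; nonzero would suffice, and invertibility gives it.
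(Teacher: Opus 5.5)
Your proof is correct and takes essentially the same approach as the paper: split $V=\R v_+\oplus V_A^<$, observe that $v_+\otimes\varphi_+$ agrees with $\lambda_+^{-k}A^k$ on the line $\R v_+$ and vanishes on $V_A^<$, and conclude from Gelfand's formula, using that the spectral radius of $\lambda_+^{-1}A|_{V_A^<}$ is strictly less than $1$. Your projection identity $\lambda_+^{-k}A^k=P+\lambda_+^{-k}A^k(I-P)$ and the Jordan-form alternative are just a more explicit version of the paper's ``vanishes on $\R v_+$ and equals $(\lambda_+^{-1}A|_{V_A^<})^k$ on $V_A^<$'' step.
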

\begin{proof}
The decomposition $V=\R v_+\oplus V_A^<$ is $A$--invariant, and every
eigenvalue of $A|_{V_A^<}$ has absolute value strictly smaller than
$|\lambda_+|$. Hence the spectral radius $\rho$ of $\lambda_+^{-1}A|_{V_A^<}$
satisfies $\rho<1$, and Gelfand's formula
$\|(\lambda_+^{-1}A|_{V_A^<})^k\|^{1/k}\to\rho$ shows that
$\|(\lambda_+^{-1}A|_{V_A^<})^k\|\to0$. On the other hand $\lambda_+^{-1}A$
is the identity on $\R v_+$. Since $v_+\otimes\varphi_+$ is also the identity
on $\R v_+$ and is zero on $V_A^<$, the difference
$\lambda_+^{-k}A^k-v_+\otimes\varphi_+$ vanishes on $\R v_+$ and equals
$(\lambda_+^{-1}A|_{V_A^<})^k$ on $V_A^<$, so its norm tends to $0$.
\end{proof}
The next lemma is the standard fact that the roots of a monic polynomial move
continuously with its coefficients. We include the proof because the
application below needs the precise form.
\begin{lemma}\label{lem:rouche}
Let $P_k$ and $P$ be monic polynomials of degree $n$ over $\CC$, and suppose
that the coefficients of $P_k$ converge to those of $P$. Let $\mu$ be a root
of $P$ of multiplicity $m$, and let $\varepsilon>0$ be such that $\mu$ is the
only root of $P$ in the closed disc of radius $\varepsilon$ about $\mu$. Then
there is $k_0$ such that for every $k\geq k_0$ the polynomial $P_k$ has
exactly $m$ roots, counted with multiplicity, in the open disc of radius
$\varepsilon$ about $\mu$.
\end{lemma}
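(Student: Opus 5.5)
The plan is to use the argument principle (Rouché's theorem) on the circle $C=\{z:|z-\mu|=\varepsilon\}$. Since $\mu$ is the only root of $P$ in the closed disc, $P$ has no zeros on $C$. As $C$ is compact and $P$ is continuous, $\delta:=\min_{z\in C}|P(z)|$ is strictly positive. Moreover, $P$ has exactly $m$ zeros, counted with multiplicity, in the open disc $D=\{|z-\mu|<\varepsilon\}$, namely $\mu$ itself.

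Next I will show that $P_k\to P$ uniformly on $C$. I write $P_k(z)-P(z)=\sum_{j=0}^{n-1}(a_j^{(k)}-a_j)z^j$; the leading terms cancel because both polynomials are monic of degree $n$. Put $R=|\mu|+\varepsilon$. Then for $z\in C$,
\[
|P_k(z)-P(z)|\leq \sum_{j=0}^{n-1}|a_j^{(k)}-a_j|\,R^j .
\]
The right-hand side tends to $0$ because the coefficients converge. So there is $k_0$ such that, for $k\geq k_0$, we have $|P_k(z)-P(z)|<\delta\leq|P(z)|$ for every $z\in C$.

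Rouché's theorem then says that $P_k$ and $P$ have the same number of zeros in $D$, counted with multiplicity. That number is $m$. An alternative is to consider the homotopy $P+t(P_k-P)$ for $t\in[0,1]$, which does not vanish on $C$. The zero count
\[
\frac{1}{2\pi i}\oint_C \frac{\bigl(P+t(P_k-P)\bigr)'}{P+t(P_k-P)}\,dz
\]
is then integer-valued and continuous in $t$, hence constant, which gives the same conclusion.

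None of the steps is a real obstacle. The only point needing care is the strict positivity of $\delta$. This uses the hypothesis that the closed disc contains no root of $P$ other than $\mu$, so that $P$ does not vanish on the boundary circle.
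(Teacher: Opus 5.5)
Your proof is correct and follows the same route as the paper. You get $\min_{C}|P|>0$ by compactness, upgrade coefficientwise convergence to uniform convergence on the circle, and conclude with Rouch\'e's theorem; your explicit bound $\sum_{j<n}|a_j^{(k)}-a_j|R^j$ with $R=|\mu|+\varepsilon$ just spells out the uniform-convergence step that the paper states in one line.
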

\begin{proof}
The circle $\{|t-\mu|=\varepsilon\}$ is compact and $P$ has no zero on it, so
$\delta:=\min_{|t-\mu|=\varepsilon}|P(t)|>0$. The coefficientwise convergence
$P_k\to P$ is uniform convergence on that circle, so there is $k_0$ with
$|P_k(t)-P(t)|<\delta\leq|P(t)|$ for all $k\geq k_0$ and all $t$ on the
circle. By Rouch\'e's theorem $P_k=P+(P_k-P)$ and $P$ have the same number of
zeros, with multiplicity, inside the circle. That number is $m$.
\end{proof}
We can now say when multiplying a large power of a proximal element by a fixed
element preserves proximality.
\begin{lemma}\label{lem:proximal-product}
Let $a\in\PGL(V)$ be proximal and let $c\in\PGL(V)$ satisfy
$c\,V_a^+\not\subseteq V_a^<$. Then $a^kc$ is proximal for all sufficiently
large $k$.
\end{lemma}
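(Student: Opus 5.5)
Lift $a$ and $c$ to $A,C\in\GL(V)$. Let $\lambda_+$, $v_+$, $\varphi_+$ be as in Lemma~\ref{lem:rank-one-limit}. The plan is to show that the renormalized operators $\lambda_+^{-k}A^kC$ converge to a rank-one operator whose nonzero eigenvalue is simple, and then to transfer simplicity and dominance of the top eigenvalue to large $k$ via Lemma~\ref{lem:rouche}. Since scaling does not affect proximality, this is enough.

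First, Lemma~\ref{lem:rank-one-limit} together with continuity of composition gives
\[
\lambda_+^{-k}A^kC\longrightarrow (v_+\otimes\varphi_+)C=v_+\otimes(\varphi_+\circ C)
\]
in operator norm. The limit $L=v_+\otimes(\varphi_+\circ C)$ has rank one, so its characteristic polynomial is $t^{n-1}(t-\mu)$, where $\mu=\varphi_+(Cv_+)$ by the computation preceding Lemma~\ref{lem:rank-one-limit}.

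The hypothesis $c\,V_a^+\not\subseteq V_a^<=\ker\varphi_+$ says exactly that $Cv_+\notin\ker\varphi_+$, so $\mu\neq0$. Hence $\mu$ is a simple root of $P=\det(t-L)$, and $0$ is the only other root, with multiplicity $n-1$.

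Next I apply Lemma~\ref{lem:rouche} twice. Set $P_k=\det(t-\lambda_+^{-k}A^kC)$; its coefficients converge to those of $P$ because the coefficients are polynomial in the matrix entries. Choose $\varepsilon<|\mu|/2$.
\begin{itemize}[leftmargin=*]
\item Around $\mu$: for large $k$, $P_k$ has exactly one root in the disc $D(\mu,\varepsilon)$.
\item Around $0$: for large $k$, $P_k$ has exactly $n-1$ roots in $D(0,\varepsilon)$.
\end{itemize}
Since $\deg P_k=n$, this accounts for all roots. The root near $\mu$ is therefore simple and has absolute value greater than $|\mu|-\varepsilon>\varepsilon$. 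Every other root has absolute value less than $\varepsilon$.

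So for large $k$, $\lambda_+^{-k}A^kC$ has a unique eigenvalue of maximal modulus, and it is simple. Multiplying by the scalar $\lambda_+^k$ preserves this, so $A^kC$ is proximal. Since this holds for any choice of lift, $a^kc$ is proximal in $\PGL(V)$.

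The only delicate point is making sure the dominant eigenvalue is unique in modulus and not just isolated. A complex-conjugate competitor could tie in absolute value, but this is excluded because all the other roots are confined to the small disc $D(0,\varepsilon)$. That is why Lemma~\ref{lem:rouche} is applied at $0$ as well as at $\mu$. The fact that $\lambda_+$ is real, noted in the definition of proximality, ensures the renormalization stays in $\GL(V)$ over $\R$; in any case it is harmless, since only the characteristic polynomial matters.
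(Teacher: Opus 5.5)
Your proposal is correct and follows essentially the same route as the paper: you renormalize to $\lambda_+^{-k}A^kC\to v_+\otimes(\varphi_+\circ C)$, use the hypothesis to get $\mu=\varphi_+(Cv_+)\neq0$, and apply Lemma~\ref{lem:rouche} at both $\mu$ and $0$ with $\varepsilon<|\mu|/2$. This isolates a simple eigenvalue of strictly maximal modulus, and proximality then passes to $A^kC$ because it is a nonzero scalar multiple.
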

\begin{proof}
Choose lifts $A$ and $C$, and keep the notation of
Lemma~\ref{lem:rank-one-limit}. Multiplication on the right by $C$ is
continuous for the operator norm, so that
 $M_k:=\lambda_+^{-k}A^kC$ converges to $v_+\otimes(\varphi_+\circ C)=:\pi$.
The hypothesis $cV_a^+\not\subseteq V_a^<$ says $Cv_+\notin\ker\varphi_+$,
that is, $\mu:=\varphi_+(Cv_+)\neq0$. By the computation preceding
Lemma~\ref{lem:rank-one-limit}, $\pi$ is a rank-one operator whose only
nonzero eigenvalue is $\mu$; hence its characteristic polynomial is
$P(t)=t^{n-1}(t-\mu)$, where $n=\dim V$.
The entries of $M_k$ converge to those of $\pi$, so the coefficients of the
characteristic polynomial $P_k$ of $M_k$, being polynomials in the entries,
converge to those of $P$. Fix $\varepsilon$ with
$0<\varepsilon<|\mu|/2$. Applying Lemma~\ref{lem:rouche} at the root $\mu$,
of multiplicity one, and at the root $0$, of multiplicity $n-1$, we find $k_0$
such that for $k\geq k_0$ the polynomial $P_k$ has exactly one root in the
disc of radius $\varepsilon$ about $\mu$ and exactly $n-1$ roots in the disc
of radius $\varepsilon$ about $0$. These two discs are disjoint and account
for all $n$ roots. The single root near $\mu$ has absolute value at least
$|\mu|-\varepsilon>|\mu|/2$, while each of the others has absolute value at
most $\varepsilon<|\mu|/2$. So $M_k$ has a unique eigenvalue of maximal
absolute value, and it is simple: $M_k$ is proximal. Finally $M_k$ is a
nonzero scalar multiple of $A^kC$, so $A^kC$ is proximal as well, and hence so
is $a^kc$.
\end{proof}
\begin{remark}\label{rem:bq-lemma41}
This is the argument used in the proof of
\cite[Lemma~4.1, p.~52]{BenoistQuint}, where it is phrased in terms of
attracting fixed points on $\mathbb P(V)$, and invoked again in the proofs of
\cite[Lemma~6.25, p.~99]{BenoistQuint} and
\cite[Lemma~6.37, p.~105]{BenoistQuint} under the words ``reasoning as in the
proof of Lemma~4.1''.
\end{remark}
\subsection{Exterior powers}\label{ssec:wedge}
Everything we do with an $\R$--regular element is done one exterior power at
a time, because $\R$--regularity of $g$ is exactly proximality of
$\bigwedge^jg$ for every $j$. 

For $1\leq j\leq n$ let $\bigwedge^j\R^n$ be the vector space with basis the
symbols
\[
 e_I=e_{i_1}\wedge\cdots\wedge e_{i_j},
 \qquad I=\{i_1<\cdots<i_j\}\subseteq\{1,\ldots,n\},
\]
carrying the unique multilinear map
$(v_1,\ldots,v_j)\mapsto v_1\wedge\cdots\wedge v_j$ which vanishes whenever
two of its arguments are equal and which sends $(e_{i_1},\ldots,e_{i_j})$ to
$e_I$ for $i_1<\cdots<i_j$. Expanding
$0=(u+w)\wedge(u+w)\wedge\cdots$ in the first two arguments shows that
interchanging two arguments changes the sign. For $B\in\GL_n(\R)$ let
$\bigwedge^jB$ be the linear map with
$(\bigwedge^jB)(v_1\wedge\cdots\wedge v_j)=Bv_1\wedge\cdots\wedge Bv_j$;
comparing values on the decomposable elements gives
$\bigwedge^j(AB)=(\bigwedge^jA)(\bigwedge^jB)$ and
$\bigwedge^j(I_n)=\mathrm{id}$, so $\bigwedge^jB$ is invertible.
\begin{lemma}\label{lem:wedge-indep}
Let $v_1,\ldots,v_j$ and $w_{j+1},\ldots,w_n$ be vectors of $\R^n$ and denote
$U=\Span\{v_1,\ldots,v_j\}$ and $U'=\Span\{w_{j+1},\ldots,w_n\}$. Vectors $v_1,\ldots,v_j\in\R^n$ are linearly independent if and only if
$v_1\wedge\cdots\wedge v_j\neq0$. Moreover, $$(v_1\wedge\cdots\wedge v_j)\wedge(w_{j+1}\wedge\cdots\wedge w_n)\neq0
 ~\text{in }\textstyle\bigwedge^n\R^n
 ~\Longleftrightarrow~\
 U\oplus U'=\R^n .$$
\end{lemma}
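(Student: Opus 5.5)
The plan is to make both statements reduce to a single fact: the top exterior power of a linear map is multiplication by its determinant. I would prove this fact first and then apply it twice.

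\emph{Step 1: $\bigwedge^n B$ is multiplication by $\det B$.} Since $\bigwedge^n\R^n$ is spanned by $e_{\{1,\dots,n\}}$, the map $\bigwedge^nB$ is multiplication by some scalar $\delta(B)$. Expand $Be_1\wedge\cdots\wedge Be_n$ multilinearly. Terms with a repeated basis vector vanish, and each surviving term, indexed by a permutation $\sigma$, is reordered by transpositions; this contributes the sign of $\sigma$, using the antisymmetry noted before the lemma. The sum is the Leibniz formula, so $\delta(B)=\det B$. The sign is well defined because the coefficient of $e_{\{1,\dots,n\}}$ is read off in a basis, so no separate argument about the parity of $\sigma$ is needed.

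\emph{Step 2: the second claim.} Let $B$ be the matrix with columns $v_1,\dots,v_j,w_{j+1},\dots,w_n$. By the associativity of $\wedge$, the wedge in question equals $(\bigwedge^nB)e_{\{1,\dots,n\}}=\det B\,e_{\{1,\dots,n\}}$. Hence it is nonzero if and only if the $n$ columns form a basis of $\R^n$. That happens exactly when the $v$'s are independent, the $w$'s are independent, and $U\cap U'=0$. By dimension count, this is exactly the condition $U\oplus U'=\R^n$ with $\dim U=j$ and $\dim U'=n-j$, and this is how I read the statement.

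\emph{Step 3: the first claim.} Suppose the $v_i$ are dependent. Then some $v_i$ is a combination of the others, and multilinearity together with vanishing on repeated arguments gives $v_1\wedge\cdots\wedge v_j=0$. Conversely, suppose they are independent. Extend them to a basis by vectors $w_{j+1},\dots,w_n$. By Step 2, the full wedge is nonzero, so $v_1\wedge\cdots\wedge v_j\neq0$.

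The main obstacle I expect is Step 2, specifically identifying the full wedge of an arbitrary family with $\det$. This requires associativity of the wedge product, which the paper's concrete definition leaves implicit. I would handle it by defining the wedge of a $j$-vector and an $(n-j)$-vector bilinearly on basis elements, $e_I\wedge e_J=\pm e_{I\cup J}$, and then checking associativity on decomposables.
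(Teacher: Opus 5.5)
Your proof is correct, but it runs in the opposite direction from the paper's.

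The paper proves the first claim directly in every degree $j$. Dependence kills the wedge by multilinearity, exactly as in your Step~3. For independent vectors, the paper extends to a basis $v_1,\ldots,v_n$ and takes $B$ with $Be_i=v_i$. It then uses only the functoriality $\bigwedge^j(AB)=\bigwedge^jA\,\bigwedge^jB$ to see that $\bigwedge^jB$ is invertible and carries the basis $(e_I)$ onto $(v_I)$. Hence $v_1\wedge\cdots\wedge v_j=v_{\{1,\ldots,j\}}$ is a basis vector. The second claim is then just the case $j=n$ of the first, with the product of the two wedges tacitly identified with the $n$-fold wedge.

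You instead prove the top-degree statement first, identifying $\bigwedge^nB$ with $\det B$ through the Leibniz expansion. You then pull the degree-$j$ nonvanishing down from it by completing to a basis and using bilinearity of the product.

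What each route buys:
\begin{itemize}
\item The paper's route needs no sign bookkeeping. As a by-product it shows that $(v_I)$ is a basis of $\bigwedge^j\R^n$, which is exactly what Lemma~\ref{lem:wedge-attracting} uses immediately afterwards.
\item Your route yields the explicit formula $v_1\wedge\cdots\wedge v_n=\det[v_1\,|\,\cdots\,|\,v_n]\,e_{\{1,\ldots,n\}}$. This is the form in which the condition reappears as the determinants $\Delta_{i,j}$ of Lemma~\ref{lem:transversality-open}.
\end{itemize}

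For nonvanishing alone, the Leibniz computation is more than you need. The relations $\delta(AB)=\delta(A)\delta(B)$ and $\delta(I_n)=1$ already give $\delta(B)\neq0$ for invertible $B$, which is the paper's argument at $j=n$.

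Your explicit treatment of the pairing $\bigwedge^j\R^n\times\bigwedge^{n-j}\R^n\to\bigwedge^n\R^n$ fills a point the paper leaves implicit, both here and in Lemma~\ref{lem:wedge-attracting}. Your patch works: define the pairing on pairs $(e_I,e_J)$ and check agreement with the $n$-fold wedge on standard basis vectors. Both sides are multilinear in the $n$ arguments, and sorting within each block produces the same signs on both sides.

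Finally, your caveat about how to read $U\oplus U'=\R^n$ is unnecessary. That condition forces $\dim U+\dim U'=n$. Since $\dim U\leq j$ and $\dim U'\leq n-j$, both families are automatically linearly independent.
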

\begin{proof}
If they are dependent, then, after renumbering, $v_j=\sum_{i<j}c_iv_i$, and then
expanding by multilinearity in the last argument gives a sum of terms each
having a repeated argument, so the wedge vanishes. If they are independent, extend to a basis $v_1,\ldots,v_n$ and let
$B\in\GL_n(\R)$ be the map with $Be_i=v_i$. Then $\bigwedge^jB$ is invertible
and sends the basis $(e_I)$ to the family $(v_I)$, so $(v_I)$ is a basis of
$\bigwedge^j\R^n$; and $v_1\wedge\cdots\wedge v_j=v_{\{1,\ldots,j\}}$ is one
of its members.
For the second claim, observe that the left-hand
side says that the $n$ vectors $v_1,\ldots,v_j,w_{j+1},\ldots,w_n$ are
independent. As there are exactly $n$ of them, that is the same as saying
that they form a basis, which is the same as $U\oplus U'=\R^n$.
\end{proof}
\subsection{Zariski density}\label{ssec:zariski}
We use the Zariski topology only through the following definition and the
single lemma that follows it, so we state both in the most elementary form.
Identify $M_n(\R)$ with $\R^{n^2}$ by listing the entries of a matrix; a
\emph{polynomial function} on $M_n(\R)$ is then a polynomial in those $n^2$
entries.
\begin{definition}\label{def:zariski-dense}
Let $S\subseteq M_n(\R)$ and let $D\subseteq S$. We say that $D$ is
\emph{Zariski dense} in $S$ if every polynomial function which vanishes at
every point of $D$ vanishes at every point of $S$.
\end{definition}
\begin{lemma}\label{lem:dense-avoids}
Let $D$ be Zariski dense in $S$ and let $f_1,\ldots,f_m$ be polynomial
functions. Suppose that there is a point $y\in S$ with $f_i(y)\neq0$ for every
$i$. Then there is a point $d\in D$ with $f_i(d)\neq0$ for every $i$.
\end{lemma}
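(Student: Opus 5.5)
The natural move is to replace the $m$ polynomials by their single product
\[
F=f_1f_2\cdots f_m ,
\]
which is again a polynomial function on $M_n(\R)$, being a product of polynomials in the $n^2$ entries. The statement then reduces directly to Definition~\ref{def:zariski-dense}, and we argue by contradiction.

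Suppose no point $d\in D$ has $f_i(d)\neq0$ for every $i$. Then at each $d\in D$ some factor $f_i(d)$ vanishes, so $F(d)=0$. Thus $F$ vanishes at every point of $D$. Since $D$ is Zariski dense in $S$, Definition~\ref{def:zariski-dense} forces $F$ to vanish at every point of $S$, and in particular $F(y)=0$. But $F(y)=\prod_i f_i(y)$ is a product of nonzero real numbers, so $F(y)\neq0$. This contradiction produces the required $d\in D$, and since $F(d)\neq0$ and $\R$ has no zero divisors, every $f_i(d)\neq0$.

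There is no real obstacle. The only point to state explicitly is that $\R$ is a field, which is used twice: to conclude $F(y)\neq0$, and to pass from $F(d)\neq0$ back to $f_i(d)\neq0$ for each $i$. The case $m=0$ is vacuous once $D$ is nonempty, and $D$ is nonempty because $S\ni y$ and the constant polynomial $1$ would otherwise violate density.
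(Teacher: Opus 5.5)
Your proof is correct and is the paper's argument: replace the $f_i$ by the single polynomial $f_1\cdots f_m$, use Zariski density to find $d\in D$ where this product is nonzero, and read off each factor; the only difference is that you argue by contradiction while the paper argues directly. A small nitpick: absence of zero divisors is needed only for $F(y)\neq0$, since $F(d)\neq0\Rightarrow f_i(d)\neq0$ holds in any ring.
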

\begin{proof}
Put $f=f_1f_2\cdots f_m$, again a polynomial function. By hypothesis
$f(y)=f_1(y)\cdots f_m(y)\neq0$, so $f$ does not vanish at every point of $S$.
By Definition~\ref{def:zariski-dense}, $f$ therefore does not vanish at every
point of $D$: there is $d\in D$ with $f(d)\neq0$. A product of real numbers is
nonzero only if every factor is, so $f_i(d)\neq0$ for every $i$.
\end{proof}
The above is the only property of Zariski density that we
use. In topological language, it says that a dense set meets a nonempty open
set, and the passage from finitely many conditions to one is the reason no
irreducibility hypothesis is needed.
\subsection{Loxodromic elements}
\label{ssec:loxodromic}
\begin{definition}[{\cite[Definition~6.10, p.~93]{BenoistQuint}}]
\label{def:loxodromic}
An element $B\in\SL_n(\R)$ is \emph{loxodromic} if the absolute values of its
eigenvalues are pairwise distinct. An element $g\in\PGL_n(\R)$ is
\emph{$\R$--regular} if one, equivalently every, lift of $g$ to $\GL_n(\R)$
has pairwise distinct absolute values of eigenvalues; for $g\in\Gamma$ this
says that a lift in $\SL_n(\Z)$ is loxodromic.
\end{definition}
Scaling a lift multiplies all eigenvalues by the same factor, so the condition
does not depend on the lift. If $g$ is $\R$--regular then all eigenvalues of a
lift are real, since a nonreal eigenvalue occurs together with its complex
conjugate and the two have the same absolute value.
We quote the two results from \cite{BenoistQuint} that we use.
\begin{lemma}[{\cite[Lemma~6.27, p.~100]{BenoistQuint}}]
\label{lem:bq-lox-wedge}
An element $B\in\SL_n(\R)$ is loxodromic if and only if $\bigwedge^jB$ is
proximal in $\bigwedge^j\R^n$ for every $1\leq j\leq n-1$.
\end{lemma}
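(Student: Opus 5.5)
The statement is \cite[Lemma~6.27]{BenoistQuint}: $B\in\SL_n(\R)$ is loxodromic if and only if $\bigwedge^jB$ is proximal for every $1\le j\le n-1$. The plan is to read off the eigenvalues of $\bigwedge^jB$ from those of $B$ and compare absolute values directly.

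\textbf{Spectral step.} Triangularize $B$ over $\CC$: choose a basis $u_1,\dots,u_n$ of $\CC^n$ with $Bu_i=\lambda_iu_i+(\text{combination of }u_1,\dots,u_{i-1})$. We order the eigenvalues (with multiplicity) so that $|\lambda_1|\ge|\lambda_2|\ge\cdots\ge|\lambda_n|$; any ordering is allowed here, since the spectral identity below does not depend on the order. Order the basis $u_I=u_{i_1}\wedge\cdots\wedge u_{i_j}$ of $\bigwedge^j\CC^n$ compatibly with a refinement of the partial order on $j$-subsets. Then $\bigwedge^jB$ is upper triangular with diagonal entries $\lambda_I=\prod_{i\in I}\lambda_i$. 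Lemma~\ref{lem:wedge-indep}, complexified, guarantees that the $u_I$ form a basis. Hence the characteristic polynomial of $\bigwedge^jB$ is $\prod_{|I|=j}(t-\lambda_I)$.

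\textbf{($\Rightarrow$).} Suppose $B$ is loxodromic, so $|\lambda_1|>\cdots>|\lambda_n|$. For any $I\neq\{1,\dots,j\}$ with $|I|=j$, some index of $\{1,\dots,j\}$ is replaced by a larger index. This gives $|\lambda_I|<|\lambda_1\cdots\lambda_j|$, strictly, since all $\lambda_i\neq0$ because $\det B=1$. So $\lambda_{\{1,\dots,j\}}$ is the unique eigenvalue of $\bigwedge^jB$ of maximal modulus. It appears only once in the list $(\lambda_I)$, so it is a simple root of the characteristic polynomial. This is exactly proximality.

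\textbf{($\Leftarrow$).} Suppose $\bigwedge^jB$ is proximal for every $1\le j\le n-1$. The maximal modulus among the $\lambda_I$ is $|\lambda_1\cdots\lambda_j|$, attained at $I_0=\{1,\dots,j\}$. Suppose $|\lambda_j|=|\lambda_{j+1}|$ for some $j\le n-1$. Then $I_1=\{1,\dots,j-1,j+1\}$ also attains the maximal modulus. Proximality demands a unique eigenvalue of maximal absolute value that is a simple root, so we must exclude this case. There are two possibilities.
\begin{itemize}
\item If $\lambda_{I_1}\neq\lambda_{I_0}$, there are two distinct eigenvalues of maximal modulus, contradicting uniqueness.
\item If $\lambda_{I_1}=\lambda_{I_0}$, this value appears twice in the list $(\lambda_I)$, so it is not a simple root, again a contradiction.
\end{itemize}
Hence $|\lambda_j|>|\lambda_{j+1}|$ for all $j$, and $B$ is loxodromic.

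\textbf{Main obstacle.} The only delicate point is the spectral identity for $\bigwedge^jB$, that is, the triangularity of $\bigwedge^jB$ in the wedge basis. This is a routine check that $Bu_{i_1}\wedge\cdots\wedge Bu_{i_j}$ equals $\lambda_Iu_I$ plus terms $u_{I'}$ whose indices are coordinatewise no larger. Everything else is elementary comparison of products of moduli.
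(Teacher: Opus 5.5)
Your proof is correct and complete. The paper gives no proof of this statement; it quotes it from Benoist--Quint. So the useful comparison is with the surrounding material.

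Your forward direction is the same computation as the paper's Lemma~\ref{lem:wedge-attracting}, done in a triangularizing basis instead of an eigenbasis. That makes no difference there, because a loxodromic $B$ has $n$ distinct eigenvalues and is diagonalizable.

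The triangular basis is what makes your converse work, and the converse is exactly the direction the paper uses. In the proof of Lemma~\ref{lem:simultaneous-right} one knows only that every $\bigwedge^j(R^kC_i)$ is proximal, and must conclude that $R^kC_i$ is loxodromic. There is no prior knowledge that $R^kC_i$ is diagonalizable, so the eigenbasis computation of Lemma~\ref{lem:wedge-attracting} is unavailable. What is needed is your identity that the characteristic polynomial of $\bigwedge^jB$ is $\prod_{|I|=j}(t-\lambda_I)$ for arbitrary $B$.

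Your two-case analysis handles coincidences such as complex-conjugate pairs correctly: $\lambda_{I_1}\neq\lambda_{I_0}$ contradicts uniqueness, and $\lambda_{I_1}=\lambda_{I_0}$ contradicts simplicity. The triangularity check you flag also goes through as stated. If the indices $k_1,\dots,k_j$ of a term in the expansion are distinct with $k_r\leq i_r$ for all $r$, then after sorting the $r$-th smallest of them is still at most $i_r$. Ordering the $j$-subsets by $\sum_{i\in I}i$ therefore gives an admissible total order.

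In Lie-theoretic terms, the weights of $\bigwedge^j\R^n$ are the $\epsilon_I=\sum_{i\in I}\epsilon_i$, and the highest is $\epsilon_1+\cdots+\epsilon_j$. The weight $\epsilon_{I_1}$ lies just below it, differing by the simple root $\epsilon_j-\epsilon_{j+1}$. So proximality in the $j$-th fundamental representation detects exactly the gap $|\lambda_j|>|\lambda_{j+1}|$.

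Arguments through the representations attached to simple roots give the analogous statement for general reductive groups. Your version gives up that generality in exchange for a short, self-contained proof, which is all the paper requires.
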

\begin{proposition}[{\cite[Proposition~6.11, p.~93]{BenoistQuint}}]
\label{prop:bq-lox-dense}
Let $\Delta$ be a Zariski-dense subsemigroup of $\SL_n(\R)$. Then the set of
loxodromic elements of $\Delta$ is Zariski dense in $\SL_n(\R)$. In
particular it is nonempty.
\end{proposition}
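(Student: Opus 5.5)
Since Proposition~\ref{prop:bq-lox-dense} is quoted from \cite{BenoistQuint}, I only outline how I would prove it, using Lemmas~\ref{lem:proximal-product}, \ref{lem:dense-avoids} and \ref{lem:bq-lox-wedge}. There are two parts. First I produce one loxodromic element of $\Delta$. Then I upgrade its existence to Zariski density of all loxodromic elements of $\Delta$.

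\emph{Existence.} By Lemma~\ref{lem:bq-lox-wedge} I need $g\in\Delta$ with $\bigwedge^jg$ proximal for every $1\leq j\leq n-1$.
\begin{enumerate}
\item Since $\SL_n(\R)$ is Zariski connected and acts irreducibly on each $\bigwedge^j\R^n$, the Zariski density of $\Delta$ makes each $\bigwedge^j\Delta$ strongly irreducible.
\item For each fixed $j$ I want some $h_j\in\Delta$ with $\bigwedge^jh_j$ proximal. This is the Goldsheid--Margulis step, and I expect it to be the main obstacle. My plan is to show that the closure of $\R\cdot\bigwedge^j\Delta$ in $\operatorname{End}(\bigwedge^j\R^n)$ contains a rank-one endomorphism $\pi$.
\begin{enumerate}
\item I would try to build elements of $\Delta$ whose $j$-th and $(j+1)$-st singular values separate without bound.
\item Zariski density forbids the semigroup from being relatively compact modulo scalars on every such gap.
\item Normalizing and passing to a limit gives a nonzero endomorphism of minimal rank in the closure.
\item Strong irreducibility then lets one compose with elements of $\Delta$ to cut the rank further, down to rank one.
\end{enumerate}
\item Once $\pi=v\otimes\varphi$ is in the closure, strong irreducibility gives $c\in\Delta$ with $\varphi(\bigwedge^jc\,v)\neq0$.
\item The Rouch\'e argument of Lemma~\ref{lem:proximal-product} then shows that the element of $\Delta$ approximating $\pi$, composed with $c$, is proximal on $\bigwedge^j\R^n$.
\end{enumerate}

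\emph{Combining the exterior powers.} I combine the $h_j$ one exterior power at a time. Suppose $g\in\Delta$ is already proximal on $\bigwedge^i$ for $i<j$, and $h=h_j$ is proximal on $\bigwedge^j$.
\begin{enumerate}
\item The transversality conditions $\bigwedge^ic\,V^+\not\subseteq V^<$ are polynomial non-vanishing conditions in the entries of $c$. I need them for the relevant attracting lines and repelling hyperplanes of $g$ and $h$, in each wedge, in both orders of composition.
\item Each such condition holds at some point of $\SL_n(\R)$. So Lemma~\ref{lem:dense-avoids}, applied to $\Delta$, gives one $c\in\Delta$ satisfying all of them simultaneously.
\item Applying Lemma~\ref{lem:proximal-product} repeatedly, I take $g^kch^l$ with first $l$ and then $k$ large. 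The power $h^l$ produces proximality on $\bigwedge^j$, while the large power $g^k$ keeps proximality on the lower wedges.
\item Controlling the attracting and repelling data of the intermediate products needs care, since those data are limits of the data of $h^l$ and $g^k$. This is again a Rouch\'e-type continuity argument.
\end{enumerate}
This yields a loxodromic $g\in\Delta$.

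\emph{Density.} Let $Z$ be the Zariski closure of the set of loxodromic elements of $\Delta$, and fix a loxodromic $g\in\Delta$.
\begin{enumerate}
\item Let $U\subseteq\SL_n(\R)$ be the nonempty Zariski open set of $h$ satisfying $\bigwedge^jh\,V^+_{\bigwedge^jg}\not\subseteq V^<_{\bigwedge^jg}$ for all $j$.
\item For $h\in\Delta\cap U$, Lemma~\ref{lem:proximal-product}, applied in each $\bigwedge^j$, makes $g^kh$ loxodromic for $k$ large. Hence $g^k\in Z':=\{x:xh\in Z\}$ for all large $k$.
\item $Z'$ is Zariski closed, and the Zariski closure of $\{g^k:k\geq k_0\}$ is a group, so it contains $e$. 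Thus $h\in Z$.
\item So $Z\supseteq\Delta\cap U$. Because $\SL_n(\R)$ is irreducible, $\Delta\cap U$ is Zariski dense, and therefore $Z=\SL_n(\R)$.
\end{enumerate}
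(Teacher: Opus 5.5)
The paper gives no argument here; it imports the statement from Benoist--Quint, so your sketch has to stand on its own. Your final ``Density'' step is correct. Since $e\in U$, the set $U$ is a nonempty Zariski open set. For $h\in\Delta\cap U$, the element $g^kh\in\Delta$ is loxodromic for large $k$ by Lemmas~\ref{lem:proximal-product} and~\ref{lem:bq-lox-wedge}. The Zariski closure of the semigroup $\{g^k:k\geq k_0\}$ is a group, and $\Delta\cap U$ is Zariski dense because $\SL_n(\R)$ is irreducible.

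\textbf{The gap: step 2(d).} The problem is in producing the first loxodromic element. Steps 2(a)--(b) only restate the goal, and step 2(d) is false as stated: strong irreducibility does not let you lower the minimal rank in $\overline{\R\bigwedge^j\Delta}$ to one. For example, $\SL_2(\CC)$ acting on $\CC^2\cong\R^4$ is Zariski connected, $\R$-irreducible (hence strongly irreducible) and unbounded modulo scalars. Yet every limit of rescaled elements is $\CC$-linear, hence of even real rank, and no element is proximal. Likewise, unboundedness modulo scalars only makes \emph{some} singular-value gap of $\bigwedge^jg$ unbounded. Unboundedness of the top gap $\sigma_j(g)/\sigma_{j+1}(g)$ along $\Delta$ is equivalent to what you want to prove. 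Your argument uses nothing about the Zariski closure beyond connectedness and irreducibility, so it cannot succeed as it stands.

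\textbf{The missing idea.} You need the Goldsheid--Margulis transfer from the Zariski closure, which uses that $\SL_n(\R)$ itself contains elements proximal on $\bigwedge^j\R^n$ (for instance diagonal ones with $|a_1|>\cdots>|a_n|$). Take $\pi\in\overline{\R\bigwedge^j\Delta}$ of minimal rank $r$ and put $W=\operatorname{Im}\pi$.
\begin{enumerate}
\item Since $\pi x\pi$ stays in this closed semigroup, minimality of $r$ makes each compression $\pi x|_W$ either $0$ or invertible.
\item If these compressions, normalized by $|\det|^{1/r}$, were unbounded, a limit would give a nonzero element of rank $<r$. So they lie in a compact group, and $\pi\bigwedge^jg|_W\in\R\cdot O(W)$ for one inner product on $W$ and all $g\in\Delta$.
\item This is a polynomial condition on $g$, hence it holds on all of $\SL_n(\R)$.
\item Apply it to $xh^kx^{-1}$, with $h$ proximal on $\bigwedge^j$ and $x$ chosen (by irreducibility) so that $\pi(\bigwedge^jx)v_+\neq0$ and $W\not\subseteq(\bigwedge^jx)V^<_h$. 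The rescaled compressions converge to a rank-one element of the closed set $\R\cdot O(W)$, which is impossible if $r\geq2$.
\end{enumerate}
With $r=1$, your steps 3--4 then work.

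\textbf{The combination step also fails as written.} With $l$ fixed and $k\to\infty$, the rescaled $\bigwedge^j(g^kch^l)$ tends to $\rho\,\bigwedge^j(ch^l)$, where $\rho$ is a limit of rescaled $\bigwedge^jg^k$. This $\rho$ need not have rank one, so proximality on $\bigwedge^j$ is not preserved. Moreover, $h$ has no attracting line on $\bigwedge^i$ for $i\neq j$, nor does $g$ on $\bigwedge^j$, so your transversality conditions are undefined. To fix this:
\begin{enumerate}
\item let $k,l\to\infty$ jointly along subsequences on which all rescaled powers converge;
\item impose general position with respect to those limit maps, inserting a second auxiliary element as in $g^kch^lc'$, since an attracting vector may lie in the kernel of a limit;
\item then apply the Rouch\'e argument.
\end{enumerate}
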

The next lemma computes, once and for all, what the attracting line and the
invariant hyperplane of $\bigwedge^jR$ are in terms of the eigenvectors of
$R$. It is the bridge between $\R$--regularity and the flags of
\S\ref{ssec:flags}.
\begin{lemma}\label{lem:wedge-attracting}
Let $r\in\Gamma$ be $\R$--regular, let $R$ be a lift with eigenvalues
$|\mu_1|>\cdots>|\mu_n|$ and corresponding eigenvectors $v_1,\ldots,v_n$, and
let $1\leq j\leq n-1$. Then $\bigwedge^jR$ is proximal, with
\[V^+_{\bigwedge^jR}=\R\,(v_1\wedge\cdots\wedge v_j),
 \text{ and }
 V^<_{\bigwedge^jR}
 =\bigl\{\xi:\ \xi\wedge(v_{j+1}\wedge\cdots\wedge v_n)=0\bigr\}.
\]
\end{lemma}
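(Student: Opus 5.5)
The plan is to work directly in the wedge basis built from the eigenvectors of $R$. Since $r$ is $\R$--regular, all eigenvalues $\mu_1,\ldots,\mu_n$ of $R$ are real with pairwise distinct absolute values. Hence they are distinct, and the real eigenvectors $v_1,\ldots,v_n$ form a basis of $\R^n$. As in the proof of Lemma~\ref{lem:wedge-indep}, let $B$ be the matrix with $Be_i=v_i$. Then $\bigwedge^jB$ sends the standard basis $(e_I)$ to the family $(v_I)$, so $(v_I)_{|I|=j}$ is a basis of $\bigwedge^j\R^n$. By definition of $\bigwedge^jR$,
\[(\textstyle\bigwedge^jR)\,v_I=\mu_{i_1}\cdots\mu_{i_j}\,v_I,\]
so $\bigwedge^jR$ is diagonalizable in this basis, with eigenvalues $\mu_I=\prod_{i\in I}\mu_i$.

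Proximality comes from comparing absolute values. For $I\neq\{1,\ldots,j\}$, sort the elements of $I$ increasingly as $i_1<\cdots<i_j$. Then $i_l\geq l$ for every $l$, with strict inequality for at least one $l$. Since $|\mu_1|>\cdots>|\mu_n|>0$ (nonzero because $\det R=1$), we get $|\mu_{i_l}|\leq|\mu_l|$ for each $l$, with at least one strict inequality. Therefore $|\mu_I|<|\mu_1\cdots\mu_j|$. So $\mu_{\{1,\ldots,j\}}$ is the unique eigenvalue of maximal absolute value, and it is simple because $\bigwedge^jR$ is diagonal in the basis $(v_I)$. This shows $\bigwedge^jR$ is proximal, with $V^+_{\bigwedge^jR}=\R\,(v_1\wedge\cdots\wedge v_j)$. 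It is also consistent with Lemma~\ref{lem:bq-lox-wedge}. Moreover, $V^<$ is the sum of the remaining eigenspaces, namely $\Span\{v_I: I\neq\{1,\ldots,j\}\}$.

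The last step, which is the only part needing care, identifies this span with $W:=\{\xi:\xi\wedge(v_{j+1}\wedge\cdots\wedge v_n)=0\}$. The map $\xi\mapsto\xi\wedge v_{\{j+1,\ldots,n\}}$ is linear from $\bigwedge^j\R^n$ to the one-dimensional space $\bigwedge^n\R^n$, so it suffices to evaluate it on the basis vectors $v_I$. If $I\neq\{1,\ldots,j\}$, then $I$ meets $\{j+1,\ldots,n\}$, so the wedge has a repeated argument and vanishes; hence every such $v_I$ lies in $W$. If $I=\{1,\ldots,j\}$, the wedge is $v_1\wedge\cdots\wedge v_n\neq0$ by Lemma~\ref{lem:wedge-indep}. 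So the functional is nonzero, $W$ is its kernel and is a hyperplane, and $W$ contains the hyperplane $\Span\{v_I:I\neq\{1,\ldots,j\}\}$. Two hyperplanes with one contained in the other are equal, so $W=V^<_{\bigwedge^jR}$.

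The argument needs two facts about the exterior algebra that the paper has not spelled out. First, the wedge product extends bilinearly from $\bigwedge^j\R^n\times\bigwedge^{n-j}\R^n$ to $\bigwedge^n\R^n$; this is implicitly used in Lemma~\ref{lem:wedge-indep}. Second, a wedge with a repeated vector vanishes, which holds by construction.
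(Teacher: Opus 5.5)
Your proposal is correct and follows essentially the same route as the paper: you diagonalize $\bigwedge^jR$ in the basis $(v_I)$, compare the products $\prod_{i\in I}|\mu_i|$, and identify $V^<_{\bigwedge^jR}$ by evaluating $\xi\mapsto\xi\wedge(v_{j+1}\wedge\cdots\wedge v_n)$ on the basis vectors $v_I$. The differences are cosmetic: you spell out the comparison $i_l\geq l$, which the paper only asserts, and you finish by noting that one hyperplane contains the other, whereas the paper reads off the coefficient $c_{\{1,\ldots,j\}}$ and checks both inclusions.
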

\begin{proof}
Since $Rv_i=\mu_iv_i$, the definition of $\bigwedge^jR$ gives
$(\bigwedge^jR)v_I=\bigl(\prod_{i\in I}\mu_i\bigr)v_I$, so $\bigwedge^jR$ is
diagonal in the basis $(v_I)$ of Lemma~\ref{lem:wedge-indep} with those
eigenvalues. Their absolute values are $\prod_{i\in I}|\mu_i|$, and since
$|\mu_1|>\cdots>|\mu_n|$ the product is strictly largest for the unique choice
$I=\{1,\ldots,j\}$. So $\bigwedge^jR$ has a unique eigenvalue of maximal
absolute value, it is simple, and the attracting line is
$\R\,v_{\{1,\ldots,j\}}$; the invariant complementary hyperplane is the span
of the remaining $v_I$. It remains to check that the span of the $v_I$ with
$I\neq\{1,\ldots,j\}$ is the set described in the statement. We verify the
two inclusions separately, after one computation. Write a general element of $\bigwedge^j\R^n$ as $\xi=\sum_Ic_Iv_I$. If
$I\neq\{1,\ldots,j\}$, then $I$ contains at least one index larger than $j$,
so the wedge $v_I\wedge(v_{j+1}\wedge\cdots\wedge v_n)$ repeats one of its
arguments and therefore vanishes. Only the term $I=\{1,\ldots,j\}$ survives,
so that
$\xi\wedge(v_{j+1}\wedge\cdots\wedge v_n)
 =c_{\{1,\ldots,j\}}\,(v_1\wedge\cdots\wedge v_n)$,
and $v_1\wedge\cdots\wedge v_n\neq0$ by Lemma~\ref{lem:wedge-indep}. Let $\xi\in\Span\{v_I:I\neq\{1,\ldots,j\}\}$. Then
$c_{\{1,\ldots,j\}}=0$, so the display gives
$\xi\wedge(v_{j+1}\wedge\cdots\wedge v_n)=0$. Now, if $\xi$ satisfies
$\xi\wedge(v_{j+1}\wedge\cdots\wedge v_n)=0$. The display then reads
$c_{\{1,\ldots,j\}}(v_1\wedge\cdots\wedge v_n)=0$, and since
$v_1\wedge\cdots\wedge v_n\neq0$ we get $c_{\{1,\ldots,j\}}=0$; that is,
$\xi$ is supported on the $v_I$ with $I\neq\{1,\ldots,j\}$.
\end{proof}
\subsection{Flags and transversality}\label{ssec:flags}
\begin{definition}\label{def:flag}
A \emph{complete flag} $F$ in $\R^n$ is a chain of subspaces
\[ 0=V_0\subset V_1\subset\cdots\subset V_{n-1}\subset V_n=\R^n,
~ \dim V_j=j .
\]
Two complete flags $F=(V_j)$ and $F'=(V'_j)$ are \emph{transverse}, written
$F\pitchfork F'$, if $V_j\oplus V'_{n-j}=\R^n$ for every $j$. The group
$\GL_n(\R)$ acts on flags by $gF=(gV_j)$, and the action factors through
$\PGL_n(\R)$ because scalars preserve every subspace.
\end{definition}
\begin{lemma}\label{lem:transverse-basic}
Transversality is symmetric, and for $g\in\GL_n(\R)$ one has
$F\pitchfork F'$ if and only if $gF\pitchfork gF'$.
\end{lemma}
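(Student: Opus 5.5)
The plan is to verify both claims of Lemma~\ref{lem:transverse-basic} directly from Definition~\ref{def:flag}. Nothing beyond the elementary fact that a linear isomorphism carries direct-sum decompositions to direct-sum decompositions should be needed.

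For symmetry, suppose $F=(V_j)$ and $F'=(V'_j)$ satisfy $F\pitchfork F'$, so that $V_j\oplus V'_{n-j}=\R^n$ for every $0\leq j\leq n$. To show $F'\pitchfork F$ we must check $V'_j\oplus V_{n-j}=\R^n$ for every $j$. Put $i=n-j$. The required condition is $V'_{n-i}\oplus V_i=\R^n$, which is the hypothesis at index $i$. Since a direct sum of two subspaces does not depend on the order in which they are written, symmetry follows by reindexing.

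For invariance, fix $g\in\GL_n(\R)$. For subspaces $U,W$ we have $g(U+W)=gU+gW$, and because $g$ is injective, $g(U\cap W)=gU\cap gW$. Hence $U\oplus W=\R^n$ (that is, $U+W=\R^n$ and $U\cap W=0$) holds if and only if $gU\oplus gW=g\R^n=\R^n$. Apply this with $U=V_j$ and $W=V'_{n-j}$ for each $j$. This shows that $F\pitchfork F'$ implies $gF\pitchfork gF'$. The converse follows by applying the same implication to $g^{-1}$ and the flags $gF, gF'$. One should also note that $gF$ is again a complete flag, since $g$ preserves inclusions and dimensions. Because scalars fix every subspace, the statement holds equally for $\PGL_n(\R)$.

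There is no real obstacle here. The only point requiring a moment's care is the index bookkeeping $j\leftrightarrow n-j$ in the symmetry claim. The endpoint cases $j=0$ and $j=n$ are automatically satisfied.
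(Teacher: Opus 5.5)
Your proof is correct and follows essentially the same route as the paper: symmetry comes from reindexing $j\leftrightarrow n-j$, and invariance holds because the invertible map $g$ carries $\R^n=V_j\oplus V'_{n-j}$ to $\R^n=gV_j\oplus gV'_{n-j}$, with $g^{-1}$ giving the converse. Your extra remarks, that $gF$ is again a complete flag and that the statement descends to $\PGL_n(\R)$, are harmless additions the paper leaves implicit.
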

\begin{proof}
Replacing $j$ by $n-j$ in the defining condition interchanges the roles of $F$
and $F'$, which gives symmetry. An invertible $g$ carries the decomposition
$\R^n=V_j\oplus V'_{n-j}$ to $\R^n=gV_j\oplus gV'_{n-j}$, and $g^{-1}$
carries it back.
\end{proof}
\begin{lemma}\label{lem:not-union}
A real vector space is not the union of finitely many proper subspaces.
\end{lemma}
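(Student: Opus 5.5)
The statement is Lemma~\ref{lem:not-union}: a real vector space $W$ is not the union of finitely many proper subspaces $W_1,\ldots,W_m$. I would argue by induction on $m$, and separately give a short direct argument along a line, which is cleaner. The case $m=0$ is trivial, since $W\neq\emptyset$ and $0\in W$. For $m=1$ the claim is just that $W_1\neq W$.

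The line argument goes as follows. Suppose, for contradiction, that $W=W_1\cup\cdots\cup W_m$.
\begin{enumerate}
\item First remove redundancy. If some $W_i$ is contained in the union of the others, discard it. So we may assume $W_1\not\subseteq W_2\cup\cdots\cup W_m$, and pick $x\in W_1$ lying in no other $W_i$.
\item Since $W_1$ is proper, pick $y\notin W_1$.
\item Consider the infinitely many vectors $y+tx$ with $t\in\R$. None lies in $W_1$: if $y+tx\in W_1$, then $y=(y+tx)-tx\in W_1$, which is false.
\item So each $y+tx$ lies in some $W_i$ with $i\geq2$. By pigeonhole, because $\R$ is infinite, two distinct parameters $t\neq t'$ give vectors in the same $W_i$.
\item Subtracting gives $(t-t')x\in W_i$, hence $x\in W_i$, contradicting the choice of $x$.
\end{enumerate}

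The only point requiring care is the redundancy step. If $W_1$ is contained in the union of the others, we simply drop it and apply the same reasoning to a smaller family; formally this is an induction on $m$. Alternatively, choose a minimal subfamily whose union is still $W$. In that subfamily, each member contains a point not covered by the rest.

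The only other ingredient is that the field is infinite, which holds over $\R$. No real obstacle is expected. If the paper later uses the lemma for subsets cut out by nonvanishing conditions, this is exactly the statement needed, and it also follows from Lemma~\ref{lem:dense-avoids} by taking $m$ nonzero linear functionals whose kernels contain the $W_i$.
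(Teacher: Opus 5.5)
Your argument is correct and is essentially the paper's proof: pass to a minimal cover, pick $x$ lying only in $W_1$ and $y\notin W_1$, and apply pigeonhole to the line $y+tx$, $t\in\R$, which misses $W_1$. Drop the closing aside, because deriving the statement from Lemma~\ref{lem:dense-avoids} is circular: that lemma assumes a point at which all the $f_i$ are nonzero, which is exactly what is being proved here (in the paper the dependence runs the other way, via Lemma~\ref{lem:common-transverse}).
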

\begin{proof}
Suppose $\R^n=U_1\cup\cdots\cup U_m$ with all $U_i$ proper, and choose such a
covering with $m$ smallest possible. By minimality, $U_2\cup\cdots\cup U_m\neq\R^n$ because otherwise deleting $U_1$ would
give a covering with $m-1$ pieces. Choose a point outside
$U_2\cup\cdots\cup U_m$; since the $U_i$ cover $\R^n$ it lies in $U_1$. Call
it $v$. Thus $v\in U_1$ and $v\notin U_i$ for every $i\geq2$; in particular
$v\neq0$. Since $U_1$ is proper, choose $w\in\R^n$ with $w\notin U_1$.
Consider the infinitely many vectors $w+tv$, $t\in\R$. None lies in $U_1$: if
$w+tv\in U_1$ then, as $v\in U_1$ and $U_1$ is a subspace,
$w=(w+tv)-tv\in U_1$, contrary to the choice of $w$. So each $w+tv$ lies in
some $U_i$ with $i\geq2$. There are finitely many such $i$ and infinitely many
$t$, so by the pigeonhole principle there are $t\neq t'$ and an index
$i\geq2$ with $w+tv\in U_i$ and $w+t'v\in U_i$. Subtracting,
$(t-t')v\in U_i$, and $t-t'\neq0$, so $v\in U_i$ with $i\geq2$. This
contradicts the choice of $v$.
\end{proof}
The following says that we can produce a flag in general position with
respect to finitely many given ones.
\begin{lemma}\label{lem:common-transverse}
Let $F^{(1)},\ldots,F^{(m)}$ be finitely many complete flags in $\R^n$. There
is a complete flag transverse to all of them.
\end{lemma}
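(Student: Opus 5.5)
We build the transverse flag $W_1\subset W_2\subset\cdots\subset W_{n-1}$ one step at a time. At each step we pick a single vector outside a finite union of proper subspaces, which Lemma~\ref{lem:not-union} allows. Write $F^{(i)}=(V^{(i)}_j)_j$. Transversality $W\pitchfork F^{(i)}$ means $W_j\oplus V^{(i)}_{n-j}=\R^n$ for each $1\leq j\leq n-1$. Since the dimensions add up to $n$, this is the same as $W_j\cap V^{(i)}_{n-j}=0$. The cases $j=0$ and $j=n$ hold automatically.

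We maintain the following inductive hypothesis.

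\emph{$H_j$: $\dim W_j=j$ and $W_j\cap V^{(i)}_{n-j}=0$ for every $i$.}

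The base case $W_0=0$ satisfies $H_0$. Suppose $W_{j-1}$ satisfies $H_{j-1}$ with $j\leq n-1$. We want a vector $w$ such that $W_j=W_{j-1}+\R w$ satisfies $H_j$.

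For each $i$, put $U_i=W_{j-1}+V^{(i)}_{n-j}$. We have $V^{(i)}_{n-j}\subseteq V^{(i)}_{n-j+1}$, and $H_{j-1}$ gives $W_{j-1}\cap V^{(i)}_{n-j+1}=0$. Hence $W_{j-1}\cap V^{(i)}_{n-j}=0$, and so
\[\dim U_i=(j-1)+(n-j)=n-1.\]
Each $U_i$ is therefore a proper subspace. By Lemma~\ref{lem:not-union}, there is a vector $w\notin U_1\cup\cdots\cup U_m$. In particular $w\notin W_{j-1}$, so $\dim W_j=j$.

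Now suppose $x\in W_j\cap V^{(i)}_{n-j}$. Write $x=u+cw$ with $u\in W_{j-1}$. Then $cw=x-u\in U_i$. Since $w\notin U_i$, this forces $c=0$. So $x=u$ lies in $W_{j-1}\cap V^{(i)}_{n-j}$, which is $0$ by the remark above. This establishes $H_j$.

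After reaching $W_{n-1}$, set $W_n=\R^n$. The resulting chain is a complete flag transverse to every $F^{(i)}$.

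The only point needing care is the dimension count showing each $U_i$ is proper. That count relies on the nesting $V^{(i)}_{n-j}\subseteq V^{(i)}_{n-j+1}$, which lets the hypothesis at step $j-1$ control the intersection needed at step $j$. Everything else is routine linear algebra.
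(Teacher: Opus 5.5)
Your proposal is correct and follows the paper's proof essentially verbatim. It uses the same greedy induction maintaining $W_j\cap V^{(i)}_{n-j}=0$, the same hyperplanes $U_i=W_{j-1}\oplus V^{(i)}_{n-j}$ of dimension $n-1$ (obtained from the nesting $V^{(i)}_{n-j}\subseteq V^{(i)}_{n-j+1}$), and the same appeal to Lemma~\ref{lem:not-union}; the only cosmetic difference is that you stop at $W_{n-1}$ and set $W_n=\R^n$, whereas the paper runs the induction through $j=n$.
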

\begin{proof}
Write $F^{(i)}=(V^{(i)}_j)$. We construct vectors $w_1,\ldots,w_n$ such that,
with $W_j=\Span\{w_1,\ldots,w_j\}$,
\begin{equation}\label{eq:greedy}
 W_j\cap V^{(i)}_{n-j}=0
 \qquad\text{for all } i\leq m \text{ and all } j\leq n,
\end{equation}
and such that $\dim W_j=j$. Granting this, $\dim W_j+\dim V^{(i)}_{n-j}=n$
together with trivial intersection gives $W_j\oplus V^{(i)}_{n-j}=\R^n$, so
$(W_j)$ is a complete flag transverse to every $F^{(i)}$. The construction is by induction on $j$. For $j=0$, there is nothing to do. Suppose $w_1,\ldots,w_{j-1}$ have been found, with $\dim W_{j-1}=j-1$ and with
\eqref{eq:greedy} valid up to index $j-1$. Fix $i$. The sum $W_{j-1}+V^{(i)}_{n-j}$ is direct. Indeed, since
$V^{(i)}_{n-j}\subseteq V^{(i)}_{n-j+1}$ we have
 $W_{j-1}\cap V^{(i)}_{n-j}\subseteq W_{j-1}\cap V^{(i)}_{n-j+1}=0$
by equation~\eqref{eq:greedy} at index $j-1$. Hence
$U_i:=W_{j-1}\oplus V^{(i)}_{n-j}$ has dimension $(j-1)+(n-j)=n-1$ and is a
proper subspace of $\R^n$. By Lemma~\ref{lem:not-union}, choose $w_j\notin U_1\cup\cdots\cup U_m$. In
particular $w_j\notin W_{j-1}$, so $\dim W_j=j$. Let us now verify equation~\eqref{eq:greedy} at index $j$. Let $x\in W_j\cap V^{(i)}_{n-j}$ and
write $x=u+tw_j$ with $u\in W_{j-1}$ and $t\in\R$. If $t\neq0$, solve for $w_j$ to get
$ w_j=t^{-1}x-t^{-1}u$. Here $x\in V^{(i)}_{n-j}\subseteq U_i$ and $u\in W_{j-1}\subseteq U_i$, and
$U_i$ is a subspace, so $w_j\in U_i$ --- contradicting the choice of $w_j$. Hence $t=0$, so $x=u\in W_{j-1}\cap V^{(i)}_{n-j}=0$. Thus
$W_j\cap V^{(i)}_{n-j}=0$, as required.
\end{proof}
The following says that a transverse pair of flags is unique up to the action
of the group, so that one such configuration can be moved to any other.
\begin{lemma}\label{lem:transitive-transverse}
$\SL_n(\R)$ acts transitively on the set of transverse pairs of complete flags
in $\R^n$.
\end{lemma}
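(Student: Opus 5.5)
We reduce the statement to a single standard pair. Let $F_0=(E_j)$ with $E_j=\Span\{e_1,\ldots,e_j\}$ and $F_0'=(E'_j)$ with $E'_j=\Span\{e_{n-j+1},\ldots,e_n\}$. These are transverse, because $E_j\oplus E'_{n-j}=\Span\{e_1,\ldots,e_j\}\oplus\Span\{e_{j+1},\ldots,e_n\}=\R^n$. It therefore suffices to show that for every transverse pair $(F,F')$ there is $g\in\SL_n(\R)$ with $gF_0=F$ and $gF_0'=F'$. Given two transverse pairs, composing one such $g$ with the inverse of the other then gives transitivity; Lemma~\ref{lem:transverse-basic} guarantees that the action preserves transversality, so the action on transverse pairs is well defined.

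Write $F=(V_j)$ and $F'=(V'_j)$ with $F\pitchfork F'$. For $1\le j\le n$ define the line
\[
L_j=V_j\cap V'_{n-j+1}.
\]
Its dimension is exactly one. Indeed, $\dim V_j+\dim V'_{n-j+1}=n+1$ forces $\dim L_j\ge1$. Conversely, $L_j\cap V'_{n-j}\subseteq V_j\cap V'_{n-j}=0$ by transversality, and $V'_{n-j}$ is a hyperplane in $V'_{n-j+1}$, so $\dim L_j\le1$.

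Choose nonzero $w_j\in L_j$. The key step is to prove that for every $j$,
\[
V_j=\Span\{w_1,\ldots,w_j\}\quad\text{and}\quad V'_{n-j}=\Span\{w_{j+1},\ldots,w_n\}.
\]
We argue by induction on $j$. The vectors $w_1,\ldots,w_j$ lie in $V_j$, since $L_i\subseteq V_i\subseteq V_j$ for $i\le j$. We claim they are independent. By induction, $w_1,\ldots,w_{j-1}$ span $V_{j-1}$. If $w_j$ were in $V_{j-1}$, then $w_j\in V_{j-1}\cap V'_{n-j+1}$, and this intersection is $0$ by transversality at index $j-1$, which is a contradiction. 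A dimension count then gives $V_j=\Span\{w_1,\ldots,w_j\}$. The symmetric argument shows that $w_{j+1},\ldots,w_n$ lie in $V'_{n-j}$, since $L_i\subseteq V'_{n-i+1}\subseteq V'_{n-j}$ for $i\ge j+1$, and that they are independent; this gives the second equality. Applying the first equality with $j=n$ shows that $w_1,\ldots,w_n$ form a basis of $\R^n$.

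Let $B\in\GL_n(\R)$ be defined by $Be_j=w_j$. Then $BE_j=V_j$ and $BE'_{j}=\Span\{w_{n-j+1},\ldots,w_n\}=V'_j$, so $BF_0=F$ and $BF_0'=F'$. The lines $\R e_j$ are preserved by diagonal matrices, and every diagonal matrix fixes both $F_0$ and $F_0'$. We may therefore replace $w_1$ by $w_1/\det B$, which multiplies the determinant by $(\det B)^{-1}$ and gives determinant $1$ while leaving all the spans unchanged. The resulting $g$ lies in $\SL_n(\R)$ and satisfies $gF_0=F$ and $gF_0'=F'$.

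The only real work is the adapted-basis claim of the third paragraph, namely that the lines $L_j$ are well defined and span each $V_j$ and $V'_{n-j}$ consistently. The final rescaling into $\SL_n(\R)$ is immediate because the stabilizer of the standard pair contains the diagonal torus.
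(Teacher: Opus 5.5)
Your proof is correct and follows essentially the same route as the paper: the lines $L_j=V_j\cap V'_{n-j+1}$, the adapted basis spanning each $V_j$ and $V'_{n-j}$, and rescaling one basis vector to land in $\SL_n(\R)$. The differences are cosmetic. You obtain $\dim L_j=1$ by a dimension count, where the paper uses the direct-sum fact $V_i=V_{i-1}\oplus L_i$, and you send the standard pair to $(F,F')$ rather than the reverse.
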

\begin{proof}
It suffices to carry an arbitrary transverse pair to the fixed pair
$(F_{\mathrm{st}},F_{\mathrm{op}})$, where
$(F_{\mathrm{st}})_j=\Span\{e_1,\ldots,e_j\}$ and
$(F_{\mathrm{op}})_j=\Span\{e_n,\ldots,e_{n-j+1}\}$; two transverse pairs then
lie in a common orbit.
Let $(F,F')=\bigl((V_j),(V'_j)\bigr)$ be transverse and set
$L_i=V_i\cap V'_{n-i+1}$ for $1\leq i\leq n$.
We first claim that $V_i=V_{i-1}\oplus L_i$; in particular $\dim L_i=1$. We use the
following elementary fact. If $\R^n=X\oplus Y$ and $X\subseteq Z$ for a
subspace $Z$, then $Z=X\oplus(Z\cap Y)$. Indeed, given $z\in Z$ write
$z=x+y$ with $x\in X$ and $y\in Y$; then $y=z-x\in Z$ because $x\in X\subseteq Z$,
so $y\in Z\cap Y$ and $z\in X+(Z\cap Y)$; and
$X\cap(Z\cap Y)\subseteq X\cap Y=0$.
Transversality at index $i-1$ gives $\R^n=V_{i-1}\oplus V'_{n-i+1}$. Apply the
fact with $X=V_{i-1}$, $Y=V'_{n-i+1}$ and $Z=V_i$, which is legitimate as
$V_{i-1}\subseteq V_i$ to get
$ V_i=V_{i-1}\oplus\bigl(V_i\cap V'_{n-i+1}\bigr)=V_{i-1}\oplus L_i$.
Comparing dimensions, $i=(i-1)+\dim L_i$, so $\dim L_i=1$, proving the claim. Choose $0\neq u_i\in L_i$ for each $i$. From $V_0=0$ and
$V_i=V_{i-1}\oplus L_i$ we get, by induction on $j$,
$ V_j=\Span\{u_1,\ldots,u_j\}$ for $0\leq j\leq n$,
so in particular $u_1,\ldots,u_n$ is a basis of $\R^n$. Next, $u_i\in L_i\subseteq V'_{n-i+1}$. For $i\geq n-j+1$ we have
$n-i+1\leq j$, so $V'_{n-i+1}\subseteq V'_j$ and hence
$u_n,u_{n-1},\ldots,u_{n-j+1}\in V'_j$. These are $j$ independent vectors in
the $j$--dimensional space $V'_j$, so
$V'_j=\Span\{u_n,\ldots,u_{n-j+1}\}$.
Let $g\in\GL_n(\R)$ be the linear map determined by $gu_i=e_i$. By the two
displayed descriptions of $V_j$ and of $V'_j$ we have $gF=F_{\mathrm{st}}$
and $gF'=F_{\mathrm{op}}$. It remains to arrange that $\det g=1$, so that the element carrying the two
flags lies in $\SL_n(\R)$ and not merely in $\GL_n(\R)$.
The vectors $u_i$ were chosen only up to nonzero scalars. Fix
$\lambda\in\R^\times$ and replace $u_1$ by $\lambda u_1$, keeping
$u_2,\ldots,u_n$ unchanged. Let $g_\lambda$ be the linear map determined by
$g_\lambda(\lambda u_1)=e_1$ and $g_\lambda(u_i)=e_i$ for $i\geq2$. If
$d_\lambda$ is the linear map which sends $u_1$ to $\lambda^{-1}u_1$ and
fixes $u_2,\ldots,u_n$, then $g_\lambda=g\circ d_\lambda$. Consequently,
 $\det g_\lambda=\det g\,\det d_\lambda=\lambda^{-1}\det g$.
The line $L_1=\R u_1$ is unchanged by the rescaling, and so are all the spans
$\Span\{u_1,\ldots,u_j\}$ and
$\Span\{u_n,\ldots,u_{n-j+1}\}$. Therefore
$g_\lambda F=F_{\mathrm{st}}$ and $g_\lambda F'=F_{\mathrm{op}}$ still hold.
Choosing $\lambda=\det g$ gives $\det g_\lambda=1$, so $g_\lambda$ is the
required element of $\SL_n(\R)$.
\end{proof}
An $\R$--regular element $r$ carries two flags. The attracting flag
$F^+(r)$, whose $j$-th member is spanned by the $j$ eigenvectors belonging to
the $j$ largest eigenvalues in absolute value, and the repelling flag
$F^-(r)$, built in the same way from the other end. What has to be arranged,
separately in each exterior power, is that $\bigwedge^jC$ does not send the
attracting line of $\bigwedge^jR$ into its invariant hyperplane.
We show that these $n-1$ conditions, taken
together, say exactly that
 $cV_j\oplus W_{n-j}=\R^n$ for every $j$. This is to say that the flags $cF^+(r)$ and $F^-(r)$ are transverse in the sense of
Definition~\ref{def:flag}.
We use the phrase \emph{$c$ moves $F^+(r)$ into
general position with respect to $F^-(r)$} as a synonym for this
transversality.
\begin{lemma}\label{lem:flag-dictionary}
Let $r\in\Gamma$ be $\R$--regular with lift $R$, eigenvalues
$|\mu_1|>\cdots>|\mu_n|$ and eigenvectors $v_1,\ldots,v_n$. Let
$ F^+(r)$ denote the space $V_j=\Span\{v_1,\ldots,v_j\}$ and
 $F^-(r)$, the space  $W_j=\Span\{v_{n-j+1},\ldots,v_n\}$ which are transverse. Let $c\in\Gamma$ with lift $C$ and let
$1\leq j\leq n-1$. Then
\[\bigl(\textstyle\bigwedge^jC\bigr)
 \bigl(V^+_{\bigwedge^jR}\bigr)\not\subseteq V^<_{\bigwedge^jR}
 ~\Longleftrightarrow~
 cV_j\oplus W_{n-j}=\R^n .
\]
Consequently these conditions hold for every $j$ if and only if
$c\,F^+(r)\pitchfork F^-(r)$.
\end{lemma}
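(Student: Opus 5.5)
The plan is to reduce everything to Lemma~\ref{lem:wedge-attracting} and Lemma~\ref{lem:wedge-indep}. First, $F^+(r)$ and $F^-(r)$ are transverse: $V_j\oplus W_{n-j}$ is spanned by $v_1,\ldots,v_j,v_{j+1},\ldots,v_n$, which is a basis because the eigenvalues are distinct.

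Next, fix $1\leq j\leq n-1$. By Lemma~\ref{lem:wedge-attracting}, $V^+_{\bigwedge^jR}=\R\,(v_1\wedge\cdots\wedge v_j)$. The definition of $\bigwedge^jC$ then gives
\[
\bigl(\textstyle\bigwedge^jC\bigr)\bigl(V^+_{\bigwedge^jR}\bigr)=\R\,(Cv_1\wedge\cdots\wedge Cv_j).
\]
Because $V^<_{\bigwedge^jR}$ is a subspace, the line $\R\,(Cv_1\wedge\cdots\wedge Cv_j)$ fails to lie in it exactly when its spanning vector is not in it. The description of $V^<$ in Lemma~\ref{lem:wedge-attracting} therefore turns the left-hand condition into
\[
(Cv_1\wedge\cdots\wedge Cv_j)\wedge(v_{j+1}\wedge\cdots\wedge v_n)\neq0 .
\]
Now apply the second part of Lemma~\ref{lem:wedge-indep}, with $U=\Span\{Cv_1,\ldots,Cv_j\}=cV_j$ and $U'=\Span\{v_{j+1},\ldots,v_n\}=W_{n-j}$. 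It says this wedge is nonzero if and only if $cV_j\oplus W_{n-j}=\R^n$, which is the claimed equivalence. Here $cV_j$ means $CV_j$, which is independent of the lift since scalars preserve subspaces.

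For the consequence, compare with Definition~\ref{def:flag}: $cF^+(r)\pitchfork F^-(r)$ means $cV_j\oplus W_{n-j}=\R^n$ for every $0\leq j\leq n$. The cases $j=0$ and $j=n$ hold trivially, since $V_0=W_0=0$ and $V_n=W_n=\R^n$. So transversality is exactly the conjunction of the conditions for $1\leq j\leq n-1$.

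I expect no real obstacle. The only care needed is to keep straight that $W_{n-j}$ is spanned by the last $n-j$ eigenvectors $v_{j+1},\ldots,v_n$, so that it matches the hyperplane description in Lemma~\ref{lem:wedge-attracting}.
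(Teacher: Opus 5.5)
Your proposal is correct and follows essentially the same route as the paper. Lemma~\ref{lem:wedge-attracting} identifies the attracting line and turns membership in $V^<_{\bigwedge^jR}$ into the vanishing of $(Cv_1\wedge\cdots\wedge Cv_j)\wedge(v_{j+1}\wedge\cdots\wedge v_n)$; Lemma~\ref{lem:wedge-indep} converts non-vanishing of that wedge into $cV_j\oplus W_{n-j}=\R^n$; and the cases $j=0,n$ of Definition~\ref{def:flag} are automatic. Your extra remarks are harmless variations: you justify the transversality of $F^\pm(r)$ by linear independence of the eigenvectors, and lift-independence because scalars preserve subspaces, where the paper notes instead that the wedge scales by $t^j$.
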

\begin{proof}
By Lemma~\ref{lem:wedge-attracting}, $V^+_{\bigwedge^jR}$ is spanned by
$v_1\wedge\cdots\wedge v_j$, so
$(\bigwedge^jC)(V^+_{\bigwedge^jR})$ is spanned by
$Cv_1\wedge\cdots\wedge Cv_j$; and the same lemma says that this vector lies
in $V^<_{\bigwedge^jR}$ exactly when $(Cv_1\wedge\cdots\wedge Cv_j)\wedge(v_{j+1}\wedge\cdots\wedge v_n)=0$. By Lemma~\ref{lem:wedge-indep}, the non-vanishing of that wedge is
equivalent to $cV_j\oplus W_{n-j}=\R^n$ (the condition does not depend on the
lift $C$, since replacing $C$ by $tC$ multiplies the wedge by $t^j$). Finally, $cF^+(r)$ has $j$-th member $cV_j$ and $F^-(r)$ has $(n-j)$-th member
$W_{n-j}$, so the conjunction over $j$ is Definition~\ref{def:flag} of
$cF^+(r)\pitchfork F^-(r)$. The cases $j=0$ and $j=n$ of that definition are
automatic.
\end{proof}
\begin{lemma}\label{lem:conjugate-flags}
Let $r\in\Gamma$ be $\R$--regular and let $\gamma\in\Gamma$. Then
$\gamma r\gamma^{-1}$ is $\R$--regular and
$F^\pm(\gamma r\gamma^{-1})=\gamma F^\pm(r)$.
\end{lemma}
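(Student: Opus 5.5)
Choose lifts: let $R\in\SL_n(\Z)$ be a lift of $r$ and $G\in\SL_n(\Z)$ a lift of $\gamma$. Then $GRG^{-1}$ is a lift of $\gamma r\gamma^{-1}$. It has the same characteristic polynomial as $R$, hence the same eigenvalues $\mu_1,\ldots,\mu_n$, so their absolute values are still pairwise distinct. By Definition~\ref{def:loxodromic} this gives $\R$--regularity, and the statement does not depend on which lifts we chose.

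For the flags, take eigenvectors $v_1,\ldots,v_n$ of $R$ with $Rv_i=\mu_iv_i$ and $|\mu_1|>\cdots>|\mu_n|$. Then
\[
(GRG^{-1})(Gv_i)=GRv_i=\mu_i\,Gv_i,
\]
so $Gv_1,\ldots,Gv_n$ are eigenvectors of $GRG^{-1}$, listed in the same order of decreasing absolute value of eigenvalue. Each eigenvalue is real and simple, so each eigenline is uniquely determined. The flags $F^\pm$ therefore depend only on the ordered eigenlines, not on the choice of $v_i$.

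By definition, the $j$-th member of $F^+(\gamma r\gamma^{-1})$ is $\Span\{Gv_1,\ldots,Gv_j\}=G\,\Span\{v_1,\ldots,v_j\}$, which is the $j$-th member of $\gamma F^+(r)$. Likewise the $j$-th member of $F^-(\gamma r\gamma^{-1})$ is $\Span\{Gv_{n-j+1},\ldots,Gv_n\}=G\,W_j$. Scaling $G$ changes none of these subspaces, so the action of $\gamma$ on flags is the one from Definition~\ref{def:flag}, and the identities hold for $\PGL$ elements.

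No step is a real obstacle. The only point needing care is that $F^\pm$ is well defined independently of the choice of eigenvectors and of lifts, and simplicity of the eigenvalues settles this.
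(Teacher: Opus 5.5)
Your proof is correct and follows the paper's argument essentially verbatim: conjugating the lift preserves the spectrum and carries the eigenbasis $v_1,\ldots,v_n$ to $Gv_1,\ldots,Gv_n$ in the same order, and both flags are spans of initial or final segments of that ordered basis. Your extra remark, that the eigenvalues are real and simple so $F^\pm$ does not depend on the choice of eigenvectors or lifts, is a welcome addition that the paper leaves implicit.
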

\begin{proof}
Let $R$ and $Y$ be lifts of $r$ and $\gamma$. If $Rv=\mu v$ then
$(YRY^{-1})(Yv)=\mu\,Yv$, so $YRY^{-1}$ has the same eigenvalues as $R$, with
eigenvectors $Yv$. In particular the absolute values are still pairwise
distinct, so $\gamma r\gamma^{-1}$ is $\R$--regular, and the eigenbasis
ordered by decreasing absolute value of the eigenvalue is
$Yv_1,\ldots,Yv_n$. Both flags are the spans of initial, respectively final,
segments of that ordered basis, so both are the images under $\gamma$ of the
corresponding flags of $r$.
\end{proof}
The transversality condition, read as a condition on
the conjugating element, is the non-vanishing of finitely many polynomials.
\begin{lemma}\label{lem:transversality-open}
Let $v_1,\ldots,v_n$ be a basis of $\R^n$, let $C_1,\ldots,C_\ell\in\GL_n(\R)$
and, for $1\leq i\leq\ell$ and $1\leq j\leq n-1$, define
\[
 \Delta_{i,j}(Y)=\det\bigl[\,
 C_iYv_1\ \big|\ \cdots\ \big|\ C_iYv_j\ \big|\
 Yv_{j+1}\ \big|\ \cdots\ \big|\ Yv_n\,\bigr],
 \qquad Y\in M_n(\R).
\]
Then:
\begin{enumerate}
\item each $\Delta_{i,j}$ is a polynomial function of the entries of $Y$,
homogeneous of degree $n$;
\item for $Y\in\GL_n(\R)$ one has $\Delta_{i,j}(Y)\neq0$ if and only if
$c_iYV_j\oplus YW_{n-j}=\R^n$, where $V_j=\Span\{v_1,\ldots,v_j\}$ and
$W_{n-j}=\Span\{v_{j+1},\ldots,v_n\}$;
\item there exists $Y_0\in\SL_n(\R)$ with $\Delta_{i,j}(Y_0)\neq0$ for every
$i$ and every $j$.
\end{enumerate}
\end{lemma}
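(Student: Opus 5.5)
Part (1) is read off from the definition of $\Delta_{i,j}$. Each column of the matrix is either $C_iYv_k$ or $Yv_k$, and both are linear in the entries of $Y$, since $C_i$ and $v_k$ are fixed. The determinant is multilinear in its $n$ columns. Hence $\Delta_{i,j}(Y)$ is a sum of products of $n$ linear forms in the entries of $Y$, that is, a homogeneous polynomial of degree $n$.

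For (2), fix $Y\in\GL_n(\R)$. The vectors $Yv_{j+1},\ldots,Yv_n$ form a basis of $YW_{n-j}$, and $C_iYv_1,\ldots,C_iYv_j$ form a basis of $c_iYV_j$, because $C_iY$ is invertible and $v_1,\ldots,v_j$ are independent. A determinant is nonzero exactly when its $n$ columns are linearly independent, hence form a basis of $\R^n$. Since the two groups of columns span the two subspaces, this says precisely that $c_iYV_j+YW_{n-j}=\R^n$, with the sum direct because the dimensions are $j$ and $n-j$. This is the same bookkeeping as the second half of Lemma~\ref{lem:wedge-indep}, and I would simply cite it.

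For (3), I would use the flag lemmas. Let $F$ be the flag $V_j=\Span\{v_1,\ldots,v_j\}$ and $F'$ the flag with $(n-j)$-th member $W_{n-j}=\Span\{v_{j+1},\ldots,v_n\}$, that is, $F'_k=\Span\{v_{n-k+1},\ldots,v_n\}$. These are transverse. By (2), the goal is to find $Y\in\SL_n(\R)$ with $c_iYF\pitchfork YF'$ for all $i$. By Lemma~\ref{lem:transverse-basic}, this is equivalent to $Y^{-1}c_iYF\pitchfork F'$.

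The plan is as follows. First apply Lemma~\ref{lem:common-transverse} to the finitely many flags $c_i^{-1}F'$ ($1\le i\le\ell$) together with $F'$ itself, obtaining a flag $G$ transverse to all of them. Then $G\pitchfork F'$ and $c_iG\pitchfork F'$ for all $i$, the latter by Lemma~\ref{lem:transverse-basic} applied with $c_i$. Since both $(G,F')$ and $(F,F')$ are transverse pairs, Lemma~\ref{lem:transitive-transverse} gives $Z\in\SL_n(\R)$ with $ZF=G$ and $ZF'=F'$.

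Setting $Y_0=Z$, we get $c_iY_0F=c_iG$ and $Y_0F'=F'$, so $c_iY_0F\pitchfork Y_0F'$. By (2), $\Delta_{i,j}(Y_0)\neq0$ for every $i$ and $j$.

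The only delicate point is step (3). There the conjugating element must be arranged to fix the repelling flag $F'$ while moving $F$ to the good flag $G$. Transitivity on transverse pairs, rather than on single flags, is exactly what makes this possible, and the determinant-one normalization is already built into Lemma~\ref{lem:transitive-transverse}.
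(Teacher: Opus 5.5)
Your proof is correct and follows the paper's argument. Parts (1) and (2) are the same as in the paper, and part (3) uses the same three flag lemmas (common transversal, symmetry and equivariance of $\pitchfork$, transitivity on transverse pairs). The only difference is that your version is the mirror image of the paper's. The paper holds the attracting flag $F^+$ fixed and moves the repelling flag to a flag transverse to $F^+$ and to all $c_iF^+$. You hold $F'$ fixed and move $F$ to a flag $G$ transverse to $F'$ and to all $c_i^{-1}F'$, which costs one extra use of equivariance to get $c_iG\pitchfork F'$.
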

\begin{proof}
(1) Each column of the displayed matrix is a fixed matrix applied to $Y$
applied to a fixed vector, so each entry of the matrix is a linear form in the
entries of $Y$. The determinant of an $n\times n$ matrix whose entries are
linear forms is a homogeneous polynomial of degree $n$ in those variables.
(2) The columns are $c_iYv_1,\ldots,c_iYv_j$, which span $c_iYV_j$. Also, the columns $Yv_{j+1},\ldots,Yv_n$ span $YW_{n-j}$. The determinant is nonzero
exactly when the $n$ columns are independent, and by
Lemma~\ref{lem:wedge-indep} that is exactly the stated direct sum
decomposition.
(3) Let $F^+$ be the flag with $j$-th member $V_j$ and $F^-$ the flag with
$j$-th member $\Span\{v_{n-j+1},\ldots,v_n\}$; they are transverse. Apply
Lemma~\ref{lem:common-transverse} to the $\ell+1$ flags
$F^+,c_1F^+,\ldots,c_\ell F^+$ and obtain a flag $F''$ transverse to all of
them. Note that $F''\pitchfork F^+$ and $F''\pitchfork c_iF^+$, hence also
$F^+\pitchfork F''$ and $c_iF^+\pitchfork F''$ by the symmetry of
Lemma~\ref{lem:transverse-basic}. Both $(F^+,F^-)$ and $(F^+,F'')$ are
transverse pairs, so by
Lemma~\ref{lem:transitive-transverse} there is $y\in\SL_n(\R)$ with
$yF^+=F^+$ and $yF^-=F''$. Then for every $i$ and $j$,
\[
 c_i\,yV_j=c_iV_j
 \quad\text{and}\quad
 yW_{n-j}=(yF^-)_{n-j}=F''_{n-j},
\]
and $c_iF^+\pitchfork F''$ gives $c_iV_j\oplus F''_{n-j}=\R^n$. By part (2),
any lift $Y_0\in\SL_n(\R)$ of $y$ satisfies $\Delta_{i,j}(Y_0)\neq0$ for all
$i,j$.
\end{proof}
\subsection{\texorpdfstring{$\Q$}{Q}--generic elements}\label{ssec:qgeneric}
$\R$--regularity constrains the absolute values of the eigenvalues. We need a condition that constrains the Galois group of the characteristic
polynomial.
Let $f\in\Q[t]$ be monic of degree $n$ with distinct roots
$\alpha_1,\ldots,\alpha_n$ in a splitting field $L$. Every
$\sigma\in\Gal(L/\Q)$ permutes the roots, and we write $\pi_\sigma\in S_n$
for the permutation determined by $\sigma(\alpha_i)=\alpha_{\pi_\sigma(i)}$.
The homomorphism $\sigma\mapsto\pi_\sigma$ is injective, because $L$ is
generated over $\Q$ by the roots. For the Galois theory used here, see
Lang~\cite[Chapter~VI, \S2]{lang2012algebra}.
An element $g\in\Gamma$ is \emph{semisimple} if a lift $G\in\SL_n(\Z)$ is
diagonalizable over $\CC$, and \emph{regular semisimple} if moreover the $n$
eigenvalues of $G$ are pairwise distinct. Replacing a lift by its negative
multiplies every eigenvalue by $-1$, so neither condition depends on the lift.
\begin{definition}\label{def:qgeneric}
An element $g\in\Gamma$ is \emph{$\Q$--generic} if a lift $G\in\SL_n(\Z)$
has the following two properties:
\begin{enumerate}
\item the characteristic polynomial $\chi_G$ is irreducible over $\Q$;
\item if $L$ is the splitting field of $\chi_G$, then the homomorphism
$\Gal(L/\Q)\to S_n$, $\sigma\mapsto\pi_\sigma$, is surjective.
\end{enumerate}
\end{definition}
Two lifts of $g$ differ by a scalar matrix in the centre of $\SL_n(\Z)$, that
is, by a sign; replacing a lift by its negative replaces every root of the
characteristic polynomial by its negative and changes neither of the two
properties. So the definition does not depend on the lift. The next lemma explains what the two conditions say about the action of the
Galois group; both parts are used in Subsection~\S\ref{ssec:centralizers}.
\begin{lemma}\label{lem:galois}
Let $f$, $L$ and $\alpha_1,\ldots,\alpha_n$ be as above.
\begin{enumerate}
\item $f$ is irreducible over $\Q$ if and only if $\Gal(L/\Q)$ acts
transitively on $\{\alpha_1,\ldots,\alpha_n\}$.
\item Suppose the image of $\Gal(L/\Q)$ in $S_n$ is all of $S_n$, and let
$n\geq3$. Then for any $i\neq j$ and any $a\neq b$ there is
$\sigma\in\Gal(L/\Q)$ with $\pi_\sigma(i)=a$ and $\pi_\sigma(j)=b$; and if
$i\neq j$ then the stabilizers of $i$ and of $j$ in $S_n$ are distinct
subgroups.
\end{enumerate}
\end{lemma}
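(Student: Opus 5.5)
Both parts reduce to elementary facts about the Galois action on roots and about permutation groups, so the plan is short.

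For part (1), I would use the standard description of orbits: the $\Gal(L/\Q)$--orbit of a root $\alpha_i$ consists exactly of the roots of its minimal polynomial over $\Q$. This is the usual fact that, for the normal extension $L/\Q$, any $\Q$--embedding $\Q(\alpha_i)\to L$ extends to an automorphism of $L$; see Lang~\cite[Chapter~VI]{lang2012algebra}.

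If $f$ is irreducible, then $f$ is, up to normalisation, the minimal polynomial of every $\alpha_i$. Hence the orbit of $\alpha_1$ is all of $\{\alpha_1,\ldots,\alpha_n\}$, and the action is transitive.

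Conversely, suppose $f=gh$ with $g,h\in\Q[t]$ monic and nonconstant. Each $\sigma\in\Gal(L/\Q)$ fixes the coefficients of $g$, so it maps the roots of $g$ to roots of $g$. Since $f$ has distinct roots, the roots of $g$ form a proper nonempty subset of $\{\alpha_1,\ldots,\alpha_n\}$ that is invariant under the Galois group. The action is therefore not transitive.

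For part (2), the homomorphism $\sigma\mapsto\pi_\sigma$ is surjective by hypothesis, so it suffices to work inside $S_n$. Given $i\neq j$ and $a\neq b$, I would construct a permutation $\pi\in S_n$ with $\pi(i)=a$ and $\pi(j)=b$. Such a $\pi$ exists: send $i\mapsto a$ and $j\mapsto b$, and then match the remaining $n-2$ indices bijectively with the remaining $n-2$ values in any way. Choosing $\sigma$ with $\pi_\sigma=\pi$ gives the first claim.

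For the stabilizers, take $i\neq j$. Since $n\geq3$, there is an index $k\notin\{i,j\}$. The transposition $(j\,k)$ fixes $i$ but moves $j$. It therefore lies in the stabilizer of $i$ and not in the stabilizer of $j$, so the two stabilizers are distinct subgroups.

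There is no real obstacle in this lemma. The only point requiring care is the extension of embeddings used in part (1), and that is a textbook citation. The hypothesis $n\geq3$ is needed only in the stabilizer statement: for $n=2$ both stabilizers are trivial and hence equal.
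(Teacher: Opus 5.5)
Your proposal is correct and follows the paper's proof: part (2) is identical (extend $i\mapsto a$, $j\mapsto b$ to a permutation and use surjectivity, then use the transposition $(j\ k)$ with $k\notin\{i,j\}$), and the forward direction of (1) is the same extension-of-embeddings argument. The only difference is cosmetic and lies in the converse of (1). You show that the roots of a nontrivial factor $g$ form a proper Galois-invariant subset, whereas the paper shows that transitivity forces the minimal polynomial of $\alpha_1$ to have all $n$ roots and hence to equal $f$; these are the same fact seen from opposite sides.
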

\begin{proof}
(1) If $f$ is irreducible then, for any $i$ and $j$, both $\Q(\alpha_i)$ and
$\Q(\alpha_j)$ are isomorphic to $\Q[t]/(f)$ over $\Q$, so there is an
isomorphism $\Q(\alpha_i)\to\Q(\alpha_j)$ sending $\alpha_i$ to $\alpha_j$,
and it extends to an automorphism of the splitting field $L$. Conversely, if
the action is transitive, let $h$ be the minimal polynomial of $\alpha_1$
over $\Q$. For each $i$ choose $\sigma$ with $\sigma\alpha_1=\alpha_i$;
applying $\sigma$ to $h(\alpha_1)=0$ and using that $h$ has rational
coefficients gives $h(\alpha_i)=0$. So $h$ has at least $n$ distinct roots,
whence $\deg h\geq n$ and $h=f$.
(2) For the first claim, extend $i\mapsto a$ and $j\mapsto b$ to a bijection
of $\{1,\ldots,n\}$, which is possible because the two pairs have the same
cardinality; the resulting permutation is $\pi_\sigma$ for some $\sigma$ by
surjectivity. For the second, choose $\ell\notin\{i,j\}$, which is possible
because $n\geq3$; the transposition $(j\ \ell)$ fixes $i$ and moves $j$, so
it lies in the stabilizer of $i$ but not in that of $j$.
\end{proof}
\begin{remark}\label{rem:qgeneric-pr}
We point out that Definition~\ref{def:qgeneric} is the usual notion of
a generic element for $\G=\PGL_{n/\Q}$ in the sense of Prasad--Rapinchuk.
Let $g\in\G(\Q)$ be regular semisimple, let $G$ be a lift, let
$T=C_\G(g)^\circ$, and let $L$ be the splitting field of $\chi_G$.  Over $L$
we may choose an eigenbasis of $G$, and in this basis $T_L$ is the diagonal
projective torus.  Hence
\[
 X^*(T_L)\cong \Z_0^n:=\{(a_1,\ldots,a_n)\in\Z^n:\textstyle\sum_i a_i=0\}.
\]
If $\sigma\in\Gal(L/\Q)$ satisfies
$\sigma(\alpha_i)=\alpha_{\pi_\sigma(i)}$, then $\sigma$ carries the
$\alpha_i$--eigenline of $G$ onto the $\alpha_{\pi_\sigma(i)}$--eigenline.
Consequently, under the above identification, the Galois action on $X^*(T_L)$
is precisely the permutation action of $\pi_\sigma$ on $\Z_0^n$.  The Weyl
group $W(\G,T)$ is $S_n$ with this same action, and this action is faithful.
Prasad--Rapinchuk call $g$ generic when the Galois image on $X^*(T)$ contains
$W(\G,T)$; see \cite[Definition~9.4 and the discussion following it]{PrasadRapinchukSurvey}.
Since in the present split type-$A$ situation the Galois image is already a
subgroup of $S_n$, this condition is equivalent to
$\pi(\Gal(L/\Q))=S_n$.  By Lemma~\ref{lem:galois}(1), surjectivity also gives
irreducibility of $\chi_G$.  Thus Prasad--Rapinchuk genericity is exactly the
condition in Definition~\ref{def:qgeneric}.  In particular, the distinction
made in \cite[Definition~9.4 and the discussion following it]{PrasadRapinchukSurvey}
between the splitting field of the associated torus and the field generated
by eigenvalues causes no ambiguity here. Since $L$ is generated by the eigenvalues and the permutation action of
$S_n$ on $\Z_0^n$ is faithful, the induced action of $\Gal(L/\Q)$ on
$X^*(T_L)$ is faithful. Thus $L$ is the splitting field of the torus $T$.
\end{remark}

Our Definition~\ref{def:loxodromic} agrees with $\R$--regularity in the sense of \cite[p.~21]{PrasadRapinchukRRegular}.
Indeed, for every $P\in\SL_n(\R)$ the eigenspace of $\Ad(P)$ corresponding to the eigenvalue $1$ is the matrix centralizer of $P$ inside $\mathfrak{sl}_n(\CC)$. The centralizer of any matrix in $M_n(\CC)$ has dimension at least $n$ (this follows immediately from its Jordan normal form), and hence its intersection with $\mathfrak{sl}_n(\CC)$ has dimension at least $n-1$. Thus the number of eigenvalues of $\Ad(P)$ of modulus one, counted with multiplicity, is always at least $n-1$. If $P$ has eigenvalues $\alpha_1,\ldots,\alpha_n$ with pairwise distinct absolute values, then $P$ is diagonalizable over $\CC$. In an eigenbasis, $\Ad(P)$ is the identity on the $(n-1)$--dimensional diagonal Cartan subalgebra, while on the root space $\CC E_{ij}$ it acts by the eigenvalue $\alpha_i/\alpha_j$. Since $|\alpha_i/\alpha_j|\neq1$ for $i\neq j$, the only eigenvalues of modulus one are the $n-1$ copies of $1$ coming from the Cartan subalgebra. Hence the minimum possible number is exactly $n-1$, so $P$ is $\R$--regular in the sense of Prasad--Rapinchuk. Conversely, an $\R$--regular element is semisimple by \cite[p.~21]{PrasadRapinchukRRegular}; after diagonalizing a lift, if $|\alpha_i|=|\alpha_j|$ for some $i\neq j$, then the root eigenvalue $\alpha_i/\alpha_j$ has modulus one in addition to the $n-1$ Cartan eigenvalues, contradicting minimality. Thus the absolute values $|\alpha_1|,\ldots,|\alpha_n|$ are pairwise distinct.

A $\Q$--generic element is regular semisimple. Indeed, let $G\in\SL_n(\Z)$ be a lift of a $\Q$--generic element. By
Definition~\ref{def:qgeneric}(1), the characteristic polynomial $\chi_G$ is
irreducible over $\Q$. Its derivative is nonzero and of smaller
degree, so the greatest common divisor of the polynomial and its derivative is
a constant, and a polynomial with a repeated root shares that root with its
derivative. An irreducible polynomial over a field of
characteristic zero is separable.  Hence $\chi_G$ has $n$ distinct roots in $\CC$. A matrix whose
characteristic polynomial has $n$ distinct roots is diagonalizable, so $G$ is
semisimple with pairwise distinct eigenvalues.
The following rigidity statement is what makes a $\Q$--generic element behave
in $\PGL_n$ as if we were in $\GL_n$.
\begin{lemma}\label{lem:scalar-rigidity}
Let $p\in\Gamma$ be $\Q$--generic and $\R$--regular, and let
$P\in\SL_n(\Z)$ be a lift.
\begin{enumerate}
\item If $U\in\GL_n(\CC)$ and $UPU^{-1}=\zeta P$ for some
$\zeta\in\CC^\times$, then $\zeta=1$.
\item If $u\in\G(\CC)$ commutes with $p$, then every representative
$U\in\GL_n(\CC)$ of $u$ commutes with $P$ as a matrix.
\item One has $C_{M_n(\Q)}(P)=\Q[P]$.
\end{enumerate}
\end{lemma}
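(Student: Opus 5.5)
For (1) the plan is to compare characteristic polynomials. The relation $UPU^{-1}=\zeta P$ says that $P$ and $\zeta P$ are similar, so they have the same eigenvalues. Hence multiplication by $\zeta$ permutes the eigenvalues $\alpha_1,\ldots,\alpha_n$ of $P$. Since $p$ is $\R$--regular, the $|\alpha_i|$ are pairwise distinct and all eigenvalues are real. Multiplication by $\zeta$ scales every absolute value by $|\zeta|$, so it maps the eigenvalue of largest modulus to some $\alpha_i$ with $|\alpha_i|=|\zeta|\,|\alpha_1|$, and likewise for the smallest. Taking the product of all eigenvalues gives $\zeta^n\det P=\det P$, so $|\zeta|=1$. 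A permutation that preserves the pairwise distinct absolute values must then be the identity, which gives $\zeta\alpha_i=\alpha_i$ for every $i$. The eigenvalues are nonzero because $\det P=1$, so $\zeta=1$. This part uses only $\R$--regularity, which avoids any Galois argument here.

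For (2), if $u$ commutes with $p$ in $\G(\CC)=\PGL_n(\CC)$, then for any representative $U$ the matrix $UPU^{-1}$ is a scalar multiple $\zeta P$ of $P$. Part (1) forces $\zeta=1$, so $U$ commutes with $P$ as a matrix.

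For (3), I will use the regular semisimplicity established just before the lemma: a $\Q$--generic element has $\chi_P$ with $n$ distinct roots. Then $P$ is cyclic, since its minimal polynomial equals $\chi_P$ and has degree $n$. The standard fact that the commutant of a cyclic matrix consists of polynomials in it then gives $C_{M_n(\Q)}(P)=\Q[P]$. To keep this self-contained, I would argue by dimensions. The inclusion $\Q[P]\subseteq C_{M_n(\Q)}(P)$ is clear, and $\dim_\Q\Q[P]=n$ because the minimal polynomial has degree $n$. Over $\CC$, diagonalize $P$ with distinct eigenvalues; any matrix commuting with it must be diagonal in that basis, so $\dim_\CC C_{M_n(\CC)}(P)=n$. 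The commutant is the solution space of a linear system with rational coefficients, so its $\Q$--dimension equals its $\CC$--dimension, namely $n$. Hence the inclusion is an equality. Alternatively, irreducibility of $\chi_P$ makes $\Q^n$ a one-dimensional $\Q[P]\cong\Q[t]/(\chi_P)$--vector space, whose endomorphisms are exactly multiplications by elements of that field.

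I do not expect a serious obstacle. The only delicate point is in (1): I must make sure the permutation argument really uses the distinctness of the absolute values, and that the determinant condition excludes $|\zeta|\neq1$. Note that $\zeta=-1$ with $P$ similar to $-P$ is not excluded by Galois genericity alone, so $\R$--regularity is essential here.
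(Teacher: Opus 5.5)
Your proof is correct. Parts (1) and (2) follow the paper's argument. The only difference in (1) is that the paper gets $|\zeta|=1$ by comparing the largest moduli of the spectra of $P$ and $\zeta P$, not from $\zeta^n\det P=\det(UPU^{-1})=\det P$. It then uses only that $\alpha_1$ is the unique eigenvalue of its modulus, whereas you use distinctness of all moduli.

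In (3) your main argument is a different route. You count dimensions, $\dim_\Q\Q[P]=n=\dim_\CC C_{M_n(\CC)}(P)$, and descend to $\Q$ because $DP=PD$ is a linear system with rational coefficients. The paper stays over $\Q$: it uses irreducibility of $\chi_P$ to show that every nonzero $v\in\Q^n$ is a cyclic vector, then matches $D$ with $f(P)$ on the basis $v,Pv,\ldots,P^{n-1}v$. Your count needs only $n$ distinct eigenvalues, so it applies to any regular semisimple rational matrix, but it relies on invariance of rank under field extension. The paper's argument is self-contained over $\Q$, and your alternative via the field $\Q[t]/(\chi_P)$ is essentially the same as it.

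One correction to your closing aside, which does not affect the proof: for $n\geq3$, genericity alone does exclude $\zeta=-1$. For odd $n$ this already follows from $\zeta^n=1$. For even $n\geq4$, the relation $P\sim-P$ would make $\alpha\mapsto-\alpha$ a fixed-point-free involution of the roots commuting with the Galois action. The Galois image would then lie in its centralizer $C_2\wr S_{n/2}$, a proper subgroup of $S_n$. Your remark is accurate only for $n=2$ (e.g.\ $\chi_P=t^2+1$), so $\R$--regularity is convenient here but not essential.
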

\begin{proof}
Let $\alpha_1,\ldots,\alpha_n$ be the eigenvalues of $P$, labelled so that
$|\alpha_1|>\cdots>|\alpha_n|$. If $UPU^{-1}=\zeta P$, then $P$ and
$\zeta P$ have the same spectrum. Comparing the largest absolute values gives
$|\zeta|=1$. The number $\zeta\alpha_1$ is then an eigenvalue of $P$ of
absolute value $|\alpha_1|$. By uniqueness, $\zeta\alpha_1=\alpha_1$, and
hence $\zeta=1$. This proves the first assertion. The second follows from
the definition of projective commutation.
We prove the third assertion directly. Since $\chi_P$ is irreducible of
degree $n$, it is the minimal polynomial of $P$. Fix $0\neq v\in\Q^n$.
If $f(P)v=0$ for a polynomial $f\in\Q[t]$ of degree less than $n$, then
$f=0$. Indeed, if $f\neq0$, the polynomials $f$ and $\chi_P$ are relatively
prime, so there are $a,b\in\Q[t]$ such that $af+b\chi_P=1$. Applying the
resulting matrix identity to $v$ gives $v=0$, a contradiction. Therefore
$v,Pv,\ldots,P^{n-1}v$ is a basis of $\Q^n$.
Let $D\in M_n(\Q)$ commute with $P$. There is a unique polynomial
$f\in\Q[t]$ of degree less than $n$ such that $Dv=f(P)v$. For every
$0\leq j<n$, one has $D(P^jv)=P^jDv=f(P)P^jv$. Thus $D$ and $f(P)$ agree
on a basis, and $D=f(P)$.
\end{proof}
\subsection{Simultaneous generic selection}\label{ssec:selection}
The aim of this subsection is to produce, inside a prescribed finite-index
subgroup, a single element $r$ which makes all the products $r^kc_i$
$\R$--regular simultaneously. Everything needed is already in place. We look
for $r$ in the form $\gamma b\gamma^{-1}$ with $b$ a fixed $\R$--regular
element supplied by Proposition~\ref{prop:bq-lox-dense}; the requirement on
$\gamma$ is that $\gamma$ move the attracting flag of $b$ into general
position with respect to its repelling flag, which by
Lemma~\ref{lem:transversality-open} is the non-vanishing of finitely many
polynomials, so Lemma~\ref{lem:dense-avoids} finds such a $\gamma$ inside
$\Lambda$. Lemma~\ref{lem:flag-dictionary} then translates transversality
into the incidence conditions in every exterior power, and
Lemma~\ref{lem:proximal-product} converts those into proximality of the
products.
\begin{lemma}\label{lem:simultaneous-right}
Let $\Lambda\leq\Gamma$ have finite index, and let
$c_1,\ldots,c_\ell\in\Gamma$. There is an $\R$--regular element
$r\in\Lambda$ such that, for every $i$, the element $r^kc_i$ is
$\R$--regular for all sufficiently large $k$.
\end{lemma}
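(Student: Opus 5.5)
We follow the route announced at the start of this subsection. First, $\Lambda$ has finite index in $\Gamma$, so its preimage $\tilde\Lambda\leq\SL_n(\Z)$ has finite index in $\SL_n(\Z)$. By Borel density, $\SL_n(\Z)$ is Zariski dense in $\SL_n(\R)$; a finite-index subgroup of a Zariski-dense subgroup of the connected group $\SL_n(\R)$ is again Zariski dense, so $\tilde\Lambda$ is Zariski dense in $\SL_n(\R)$. Proposition~\ref{prop:bq-lox-dense} then supplies a loxodromic $B\in\tilde\Lambda$, whose image $b\in\Lambda$ is $\R$--regular. Let $v_1,\ldots,v_n$ be eigenvectors of $B$ ordered by decreasing absolute value of eigenvalues. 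We will take $r=\gamma b\gamma^{-1}$ with $\gamma\in\Lambda$. Then $r\in\Lambda$, and by Lemma~\ref{lem:conjugate-flags} $r$ is $\R$--regular with $F^\pm(r)=\gamma F^\pm(b)$, that is, with eigenvectors $\gamma v_1,\ldots,\gamma v_n$.

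Next we impose a condition on $\gamma$. Apply Lemma~\ref{lem:transversality-open} to the basis $v_1,\ldots,v_n$ and the lifts $C_1,\ldots,C_\ell$ of the $c_i$, obtaining polynomials $\Delta_{i,j}$. Part (3) gives $Y_0\in\SL_n(\R)$ with all $\Delta_{i,j}(Y_0)\neq0$. Since $\tilde\Lambda$ is Zariski dense in $S=\SL_n(\R)$ in the sense of Definition~\ref{def:zariski-dense}, Lemma~\ref{lem:dense-avoids} yields $Y\in\tilde\Lambda$ with $\Delta_{i,j}(Y)\neq0$ for all $i,j$. Let $\gamma$ be its image. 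By part (2), $c_i\gamma V_j\oplus\gamma W_{n-j}=\R^n$ for all $i$ and $j$, where $\gamma V_j$ and $\gamma W_{n-j}$ are exactly the $j$-th member of $F^+(r)$ and the $(n-j)$-th member of $F^-(r)$. Hence $c_iF^+(r)\pitchfork F^-(r)$ for every $i$.

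Finally we pass to proximality. Fix $i$ and $1\leq j\leq n-1$, and let $R$ be a lift of $r$. Lemma~\ref{lem:flag-dictionary}, applied to $r$ and $c_i$, gives $(\bigwedge^jC_i)V^+_{\bigwedge^jR}\not\subseteq V^<_{\bigwedge^jR}$. Moreover $\bigwedge^jR$ is proximal by Lemma~\ref{lem:wedge-attracting}. Lemma~\ref{lem:proximal-product} in $\PGL(\bigwedge^j\R^n)$ then shows that $(\bigwedge^jR)^k\bigwedge^jC_i=\bigwedge^j(R^kC_i)$ is proximal for all $k\geq k_{i,j}$. Set $k_i=\max_j k_{i,j}$. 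For $k\geq k_i$ every exterior power of $R^kC_i$ is proximal, so Lemma~\ref{lem:bq-lox-wedge} makes $R^kC_i$ loxodromic, i.e.\ $r^kc_i$ is $\R$--regular. To apply Lemma~\ref{lem:bq-lox-wedge}, which is stated for $\SL_n(\R)$, we use that $R^kC_i\in\SL_n(\Z)$, or else note that scaling changes neither condition.

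The only step that is not bookkeeping is the Zariski density of the finite-index subgroup $\tilde\Lambda$. It is needed both to obtain $B$ inside $\Lambda$ and to choose $\gamma$ inside $\Lambda$. We would quote Borel density for $\SL_n(\Z)$ and argue as follows. The Zariski closure $H$ of $\tilde\Lambda$ has finite index in the Zariski closure of $\SL_n(\Z)$, which is $\SL_n(\R)$, because finitely many cosets of $\tilde\Lambda$ cover $\SL_n(\Z)$. Since $\SL_n(\R)$ is Zariski connected, this forces $H=\SL_n(\R)$.
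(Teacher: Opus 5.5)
Your proof is correct and follows the paper's argument step for step. You take a loxodromic $B$ in the preimage $\widetilde\Lambda$ from Proposition~\ref{prop:bq-lox-dense}, choose a conjugator $Y\in\widetilde\Lambda$ via Lemmas~\ref{lem:transversality-open} and~\ref{lem:dense-avoids}, and then pass through Lemmas~\ref{lem:flag-dictionary}, \ref{lem:proximal-product} and~\ref{lem:bq-lox-wedge}, exactly as the paper does. The only difference is cosmetic: the paper applies Borel density directly to the lattice $\widetilde\Lambda$, whereas you deduce its Zariski density from that of $\SL_n(\Z)$ by the finite-index and Zariski-connectedness argument, which is equally valid.
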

\begin{proof}
Let $\widetilde\Lambda\leq\SL_n(\Z)$ be the preimage of $\Lambda$. The group $\SL_n(\Z)$ is a lattice in $\SL_n(\R)$, and $\widetilde\Lambda$ has
finite index in it, hence has finite covolume as well and is again a lattice.
The group $\SL_n(\R)$ is connected, semisimple and has no compact factor, so that the Borel density
theorem \cite{borel1960density} (see also \cite[Theorem~3.2.5]{zimmer2013ergodic})
applies and therefore, it is Zariski dense in $\SL_n(\R)$ in the
sense of Definition~\ref{def:zariski-dense}. Fix lifts
$C_1,\ldots,C_\ell\in\SL_n(\Z)$ of $c_1,\ldots,c_\ell$. By
Proposition~\ref{prop:bq-lox-dense} applied to the Zariski-dense subgroup
$\widetilde\Lambda$ of $\SL_n(\R)$, there is a loxodromic
$B\in\widetilde\Lambda$; its image $b\in\Lambda$ is $\R$--regular. Let
$v_1,\ldots,v_n$ be an eigenbasis of $B$ ordered so that the eigenvalues
satisfy $|\mu_1|>\cdots>|\mu_n|$, and let $F^\pm=F^\pm(b)$ be the two flags of
Lemma~\ref{lem:flag-dictionary}. Let $\Delta_{i,j}$ be
the polynomials of Lemma~\ref{lem:transversality-open}, built from this basis
$v_1,\ldots,v_n$ and from $C_1,\ldots,C_\ell$. Using Lemma~\ref{lem:transversality-open}-(3),
there is an element of $\SL_n(\R)$ at which all of them are nonzero. Since
$\widetilde\Lambda$ is Zariski dense in $\SL_n(\R)$,
Lemma~\ref{lem:dense-avoids} provides $Y\in\widetilde\Lambda$ with
 $\Delta_{i,j}(Y)\neq0$ for all $1\leq i\leq\ell$ and  $1\leq j\leq n-1$. Put $R=YBY^{-1}\in\widetilde\Lambda$ and $r=\gamma b\gamma^{-1}\in\Lambda$,
where $\gamma\in\Lambda$ is the image of $Y$. By
Lemma~\ref{lem:conjugate-flags}, the element $r$ is $\R$--regular, its
eigenbasis ordered by decreasing absolute value of the eigenvalues is
$Yv_1,\ldots,Yv_n$, and $F^\pm(r)=\gamma F^\pm$. By part (2) of
Lemma~\ref{lem:transversality-open}, the non-vanishing of $\Delta_{i,j}(Y)$
says that
 $c_i\bigl(F^+(r)\bigr)_j\oplus\bigl(F^-(r)\bigr)_{n-j}=\R^n$ for all $i$ and $j$. That is to say that $c_iF^+(r)\pitchfork F^-(r)$ for every $i$.
Fix
$i$ and $j$. By Lemma~\ref{lem:wedge-attracting} the endomorphism
$\bigwedge^jR$ is proximal, and by Lemma~\ref{lem:flag-dictionary} the
transversality just established gives
\[
 \bigl(\textstyle\bigwedge^jC_i\bigr)
 \bigl(V^+_{\bigwedge^jR}\bigr)\not\subseteq V^<_{\bigwedge^jR}.
\]
These are exactly the hypotheses of Lemma~\ref{lem:proximal-product}, applied
in the vector space $\bigwedge^j\R^n$ with $a=\bigwedge^jR$ and
$c=\bigwedge^jC_i$. Hence
\[
 \textstyle\bigwedge^j(R^kC_i)
 =\bigl(\bigwedge^jR\bigr)^k\bigl(\bigwedge^jC_i\bigr)
\]
is proximal for all sufficiently large $k$. There are only finitely many
pairs $(i,j)$, so a single $k$ works for all of them, and for such $k$,
Lemma~\ref{lem:bq-lox-wedge} says that $R^kC_i$ is loxodromic, that is, that
$r^kc_i$ is $\R$--regular.
\end{proof}
The profinite topology on $\Gamma$ is the group topology in which the
finite-index normal subgroups form a neighborhood basis of the identity.
Thus, if a subset $\Delta\subseteq\Gamma$ is profinite open and
$g\in\Delta$, there is a finite-index normal subgroup
$\Lambda_0\triangleleft\Gamma$ such that $g\Lambda_0\subseteq\Delta$. We need $p$
and $q$ that are simultaneously $\Q$--generic and $\R$--regular and that differ
by a power of the transvection $s$. Genericity is arranged by keeping them both
inside a single coset of a finite-index subgroup, on which it is an
open condition, and $\R$--regularity comes from
Lemma~\ref{lem:simultaneous-right}. We set $s$ to the image of $I_n+E_{12}$.
\begin{proposition}\label{prop:generic-pair}
Let $N\triangleleft\Gamma$ be a finite-index normal subgroup. There are an
integer $m\geq1$ and $\Q$--generic, $\R$--regular elements $p,q\in N$ such
that $q=ps^m$.
\end{proposition}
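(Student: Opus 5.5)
The plan is to make $\Q$--genericity a congruence condition, so that it holds on a whole coset $g_0\Lambda_0$ of a finite-index normal subgroup $\Lambda_0\leq N$. We then choose $m$ with $s^m\in\Lambda_0$, so that any $p$ in the coset gives $q=ps^m$ in the same coset. Finally, Lemma~\ref{lem:simultaneous-right} supplies $\R$--regularity for $p$ and $q$ at once.

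\emph{Step 1: a congruence criterion for genericity.} Let $G\in\SL_n(\Z)$ and fix three distinct primes $\ell_1,\ell_2,\ell_3$. Suppose that $\chi_G \bmod \ell_1$ is irreducible of degree $n$. Suppose that $\chi_G\bmod \ell_2$ is separable and factors as an irreducible of degree $n-1$ times a linear factor. Suppose that $\chi_G\bmod\ell_3$ is separable and factors as an irreducible quadratic times $n-2$ distinct linear factors. Then $\chi_G$ is irreducible over $\Q$, because it is already irreducible mod $\ell_1$. By Dedekind's theorem on Frobenius elements at unramified primes, the image of $\Gal(L/\Q)$ in $S_n$ contains an $n$--cycle, an $(n-1)$--cycle and a transposition. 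A transitive subgroup of $S_n$ containing an $(n-1)$--cycle is doubly transitive; together with a transposition it contains every transposition, hence equals $S_n$. Since these conditions depend only on $G \bmod M:=\ell_1\ell_2\ell_3$, and are insensitive to replacing $G$ by $-G$ (the cycle types are unchanged), every element of $\Gamma$ whose lift reduces mod $M$ into a prescribed set $\Omega\subseteq\SL_n(\Z/M)$ is $\Q$--generic. The set $\Omega$ is nonempty: for each $\ell_i$ take the companion matrix of a monic polynomial over $\mathbb F_{\ell_i}$ with the required factorization and constant term $(-1)^n$, so that its determinant is $1$. Such polynomials exist by counting irreducibles over finite fields, after enlarging the $\ell_i$ if needed. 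Then glue the three matrices by the Chinese remainder theorem.

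\emph{Step 2: realizing $\Omega$ inside $N$.} Let $\widetilde N\leq\SL_n(\Z)$ be the preimage of $N$. By the congruence subgroup property for $\SL_n(\Z)$, $n\geq3$, we have $\widetilde N\supseteq\Gamma(K)$ for some $K$. We choose the $\ell_i$ not dividing $K$. Then the reduction map $\Gamma(K)\to\SL_n(\Z/M)$ is onto, by strong approximation, i.e.\ surjectivity of $\SL_n(\Z)\to\SL_n(\Z/KM)$ together with the Chinese remainder theorem. Hence there is $G_0\in\widetilde N$ with $G_0 \bmod M\in\Omega$. Let $g_0$ be its image and put $\Lambda_0:=N\cap\operatorname{pr}(\Gamma(M))$, a finite-index normal subgroup of $\Gamma$. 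By Step~1, every element of $g_0\Lambda_0=\Lambda_0g_0\subseteq N$ is $\Q$--generic. Let $m\geq1$ be the order of $s\Lambda_0$ in the finite group $\Gamma/\Lambda_0$, so that $s^m\in\Lambda_0$.

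\emph{Step 3: $\R$--regularity.} Apply Lemma~\ref{lem:simultaneous-right} with $\Lambda=\Lambda_0$, $c_1=g_0$ and $c_2=g_0s^m$. This gives an $\R$--regular $r\in\Lambda_0$ and some $k$ such that both $r^kg_0$ and $r^kg_0s^m$ are $\R$--regular. Set $p=r^kg_0$ and $q=ps^m$. Both lie in $\Lambda_0g_0$, the first because $r\in\Lambda_0$ and the second because $\Lambda_0$ is normal and $s^m\in\Lambda_0$. By Step~2 they are therefore $\Q$--generic elements of $N$.

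The main obstacle is Step~2, namely hitting the open set $\Omega$ from inside an arbitrary finite-index $N$. I would use the congruence subgroup property as above. If one prefers to avoid it, one can instead invoke strong approximation for the Zariski-dense subgroup $\widetilde N$ (Nori, Weisfeiler, Matthews--Vaserstein--Weisfeiler), which gives surjectivity onto $\SL_n(\mathbb F_\ell)$ for all but finitely many $\ell$. Some minor care is needed in Step~1 to choose the polynomials with the correct constant term and to check that the Frobenius cycle types survive passing to $\PSL_n$; both points are routine.
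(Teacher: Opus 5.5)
Your proposal is correct, and Step~3 is the same as the end of the paper's proof. There too one takes $m$ to be the order of $s$ modulo a finite-index normal subgroup $\Lambda\subseteq N$, applies Lemma~\ref{lem:simultaneous-right} to $g$ and $gs^m$, and uses normality to keep $p$ and $q$ in the generic coset.

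The difference is in how that coset is produced. The paper quotes two theorems of Prasad--Rapinchuk. The first gives the existence of a $\Q$--generic element $g$ in the finitely generated Zariski-dense subgroup $N$. The second gives openness of the set of $\Q$--generic elements in the profinite topology, which yields $g\Lambda$ inside that set.

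You prove a type-$A$ version of both facts at once. The Frobenius cycle-type argument (an $n$--cycle, an $(n-1)$--cycle and a transposition force $S_n$) turns genericity into an explicit congruence condition modulo $\ell_1\ell_2\ell_3$. The congruence subgroup property, together with surjectivity of $\SL_n(\Z)\to\SL_n(\Z/KM)$, then realizes that condition inside $N$.

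Your route is self-contained up to classical facts (Dedekind's theorem and the congruence subgroup property) and exhibits the generic coset explicitly as a congruence class. The paper's route is shorter and rests on results valid for Zariski-dense subgroups of general semisimple groups.

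Step~2 does not actually need the congruence subgroup property. Put $d=[\SL_n(\Z):\widetilde N]$. The image of $\widetilde N$ in $\prod_i\SL_n(\mathbb F_{\ell_i})$ has index at most $d$, so its normal core has index at most $d!$. For $n\geq3$, every proper normal subgroup of $\SL_n(\mathbb F_\ell)$ is central, hence of index at least $|\PSL_n(\mathbb F_\ell)|$. So once the $\ell_i$ are large, the core contains every factor and the image is the whole product.
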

\begin{proof}
The group $\Gamma$ is finitely generated, so the finite-index subgroup
$N$ is finitely generated. It is Zariski dense by the Borel density
theorem. \cite[Theorem~9.6]{PrasadRapinchukSurvey}
therefore gives a $\Q$--generic element $g\in N$. Let $\Delta\subseteq\Gamma$ be the set of $\Q$--generic elements, so that
$g\in\Delta$. Since every element of
$\Delta$ is regular semisimple, $\Delta$ is exactly the set of regular
semisimple $\Q$--generic elements of $\Gamma$, which is the set denoted
$\Delta(\Gamma,\Q)$ in \cite{PrasadRapinchukOpen}. By
\cite[Theorem~1]{PrasadRapinchukOpen}, $\Delta$ is open in the profinite
topology of $\Gamma$.
Hence, there is a
finite-index subgroup $\Lambda_0\leq\Gamma$ such that
$g\Lambda_0\subseteq\Delta$. Replace $\Lambda_0$ by its
normal core and then intersect it with $N$. We obtain a finite-index normal
subgroup $\Lambda\triangleleft\Gamma$ such that
$\Lambda\subseteq N$ and $g\Lambda\subseteq\Delta$.
The element $s\Lambda$ has finite order in the finite quotient
$\Gamma/\Lambda$. Let $m\geq1$ be this order. Then $s^m\in\Lambda$.
Apply Lemma~\ref{lem:simultaneous-right} to $\Lambda$ and to the two
elements $g$ and $gs^m$. We obtain $r\in\Lambda$ such that both $r^kg$
and $r^kgs^m$ are $\R$--regular for every sufficiently large $k$. Fix
such a $k$, and put $p=r^kg$ and $q=r^kgs^m$.
Since $\Lambda$ is normal, $g^{-1}r^kg\in\Lambda$. Therefore
$p=g(g^{-1}r^kg)\in g\Lambda$. Since $s^m\in\Lambda$, one also has
$q=g(g^{-1}r^kg)s^m\in g\Lambda$. Thus $p,q\in\Delta$, so they are
$\Q$--generic. They are $\R$--regular by the choice of $k$. Finally,
$r,g,s^m\in N$, so $p,q\in N$, and $q=ps^m$.
\end{proof}
\subsection{Centralizers}\label{ssec:centralizers} Let $s\in\Gamma$ be the image of $I_n+E_{12}$. A complex matrix is \emph{unipotent} if
all its eigenvalues are equal to $1$. The matrix $I_n+E_{12}$ is a nontrivial
unipotent matrix since its minimal polynomial is $(t-1)^2$, and hence, it is not
diagonalizable. If $\omega$ is a root of unity, the \emph{order} of $\omega$
is the least positive integer $d$ such that $\omega^d=1$.
\begin{lemma}\label{lem:nontrivial-centralizer-point}
Let $p\in\Gamma$ be $\Q$--generic and $\R$--regular. If
$e\neq z\in C_\Gamma(p)$, then $z$ is regular semisimple. Moreover,
$C_\Gamma(p)\cap C_\Gamma(s)=\{e\}$.
\end{lemma}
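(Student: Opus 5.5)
Let $P\in\SL_n(\Z)$ be a lift of $p$ and $Z$ a lift of $z\neq e$. By Lemma~\ref{lem:scalar-rigidity}(2), $Z$ commutes with $P$ as a matrix. Then Lemma~\ref{lem:scalar-rigidity}(3) gives $Z=f(P)$ for a polynomial $f\in\Q[t]$ of degree less than $n$. Since $P$ is diagonalizable with distinct eigenvalues $\alpha_1,\ldots,\alpha_n$, the matrix $Z$ is diagonalizable in the same eigenbasis, with eigenvalues $f(\alpha_1),\ldots,f(\alpha_n)$. So $z$ is semisimple, and the task is to show these eigenvalues are pairwise distinct.

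Suppose $f(\alpha_i)=f(\alpha_j)$ for some $i\neq j$. Since $f$ has rational coefficients, applying $\sigma\in\Gal(L/\Q)$ gives $f(\alpha_{\pi_\sigma(i)})=f(\alpha_{\pi_\sigma(j)})$. By Lemma~\ref{lem:galois}(2), the set of pairs $\{\pi_\sigma(i),\pi_\sigma(j)\}$ runs over all two-element subsets of $\{1,\ldots,n\}$. Hence $f(\alpha_a)=f(\alpha_b)$ for all $a,b$, so $Z$ is a scalar matrix $\zeta I_n$. In $\SL_n(\Z)$ this forces $\zeta=\pm1$, and then $z=e$ in $\Gamma$, a contradiction. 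This proves regular semisimplicity. I expect this Galois double-transitivity step to be the main point; it is where $n\geq 3$ and the full $S_n$ image are used. Double transitivity already holds for $n=2$, so the only care needed is tracking lifts correctly.

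For the second claim, suppose $z\in C_\Gamma(p)\cap C_\Gamma(s)$ with $z\neq e$. Let $S=I_n+E_{12}$ and let $Z$ be a lift of $z$; we have just shown $Z$ is regular semisimple. Since $z$ commutes with $s$, we get $ZSZ^{-1}=\pm S$. The sign must be $+$: the spectrum of $-S$ is $\{-1\}$, which differs from that of $S$. So $Z$ commutes with $S$ as a matrix. But then $Z$ preserves $\ker(S-I)$, and $S$ preserves each eigenspace of $Z$. Each eigenspace of $Z$ is a line, because $Z$ is regular semisimple, so every eigenvector of $Z$ is an eigenvector of $S$. That makes $S$ diagonalizable in an eigenbasis of $Z$. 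This contradicts the fact that $S$ is a nontrivial unipotent matrix, and hence not diagonalizable. Therefore $C_\Gamma(p)\cap C_\Gamma(s)=\{e\}$.
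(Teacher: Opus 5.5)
Your proof is correct and follows the paper's argument. You write $Z=f(P)$ via Lemma~\ref{lem:scalar-rigidity}, use Galois double transitivity to force distinct eigenvalues, and reach a contradiction because a nontrivial unipotent cannot commute with a regular semisimple matrix. The only deviation is in the second part: you remove the projective scalar directly by comparing spectra of $ZSZ^{-1}$ and $\pm S$ (as in Lemma~\ref{lem:transvection-powers}), so $S$ itself commutes with $Z$; the paper instead writes $SZS^{-1}=\omega Z$, shows $\omega^n=1$, and passes to $S^d=I_n+dE_{12}$, so your version is marginally shorter.
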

\begin{proof}
Choose lifts $P,Z\in\SL_n(\Z)$. By Lemma~\ref{lem:scalar-rigidity}, the
matrices $P$ and $Z$ commute, and the same lemma gives
$C_{M_n(\Q)}(P)=\Q[P]$, so $Z=f(P)$ for some $f\in\Q[t]$.
Let $\alpha_1,\ldots,\alpha_n$ be the eigenvalues of $P$. In an eigenbasis of
$P$, the eigenvalues of $Z$ are $f(\alpha_1),\ldots,f(\alpha_n)$. Suppose
that $f(\alpha_i)=f(\alpha_j)$ for some $i\neq j$. By
Lemma~\ref{lem:galois}(2), for every ordered pair $(a,b)$ with $a\neq b$
there is a Galois automorphism $\sigma$ with $\pi_\sigma(i)=a$ and
$\pi_\sigma(j)=b$. Applying $\sigma$ to $f(\alpha_i)=f(\alpha_j)$, and using
that $f$ has rational coefficients, gives $f(\alpha_a)=f(\alpha_b)$. Hence
all the eigenvalues of $Z$ are equal. The matrix $Z=f(P)$ is diagonal in an
eigenbasis of $P$, so it is diagonalizable, and a diagonalizable matrix with
one eigenvalue is scalar, contrary to $z\neq e$. Therefore the eigenvalues of
$Z$ are pairwise distinct and $z$ is regular semisimple.
It remains to prove that $C_\Gamma(p)\cap C_\Gamma(s)=\{e\}$. Suppose that
$e\neq z$ belongs to this intersection, and let $S=I_n+E_{12}$. Since $z$
commutes projectively with $s$, there is $\omega\in\CC^\times$ with
$SZS^{-1}=\omega Z$. Taking determinants gives
$1=\det(SZS^{-1})=\det(\omega Z)=\omega^n$, because $\det Z=1$; so $\omega$
is a root of unity. Let $d$ be its order. Iterating the relation gives
$S^dZS^{-d}=\omega^dZ=Z$, so $S^d$ commutes with $Z$. The eigenvalues of $Z$
are pairwise distinct, so every matrix commuting with $Z$ is diagonal in an
eigenbasis of $Z$ and is therefore semisimple. On the other hand
$E_{12}^2=0$, so the binomial formula gives $S^d=I_n+dE_{12}$, a nontrivial
unipotent matrix, which is not semisimple. This contradiction proves the
claim.
\end{proof}
\begin{lemma}\label{lem:transvection-powers}
For every nonzero integer $m$, one has $C_\Gamma(s^m)=C_\Gamma(s)$.
\end{lemma}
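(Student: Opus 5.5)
The inclusion $C_\Gamma(s)\subseteq C_\Gamma(s^m)$ is immediate, since anything commuting with $s$ commutes with every power of $s$. The real content is the reverse inclusion. I would prove it by lifting to $\SL_n(\Z)$, as in the proof of Lemma~\ref{lem:nontrivial-centralizer-point}. Put $S=I_n+E_{12}$ and let $t\in C_\Gamma(s^m)$ with lift $T\in\SL_n(\Z)$. Since $E_{12}^2=0$, the binomial formula gives $S^m=I_n+mE_{12}$. Projective commutation means $TS^mT^{-1}=\omega S^m$ for some scalar $\omega$.

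First I will show $\omega=1$. The left side $TS^mT^{-1}$ is unipotent, so all its eigenvalues are $1$, while all eigenvalues of $\omega S^m$ equal $\omega$. Hence $\omega=1$, and $T$ commutes with $S^m$ as a matrix.

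Next, from $T(I_n+mE_{12})T^{-1}=I_n+mE_{12}$ we get $T(mE_{12})T^{-1}=mE_{12}$. Since $m\neq0$, this gives $TE_{12}T^{-1}=E_{12}$. Therefore $TST^{-1}=I_n+E_{12}=S$, and $t\in C_\Gamma(s)$.

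The argument works equally for negative $m$, since $S^m=I_n+mE_{12}$ holds for all integers $m$. No step is a real obstacle; the only point needing care is excluding a nontrivial scalar $\omega$, which is handled by comparing eigenvalues of unipotent matrices. Alternatively, one can take determinants to see $\omega^n=1$ and then compare traces: $\operatorname{tr}(TS^mT^{-1})=n=\omega n$.
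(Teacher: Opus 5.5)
Your proof is correct and follows essentially the same route as the paper: lift to $\SL_n(\Z)$, use $S^m=I_n+mE_{12}$, rule out a nontrivial scalar by comparing spectra (unipotent versus $\{\omega\}$), and then cancel $m$ to get $TE_{12}T^{-1}=E_{12}$. Your trace remark ($n=\omega n$) is a valid one-line alternative for excluding $\omega\neq1$.
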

\begin{proof}
Since $E_{12}^2=0$, $s^m=I_n+mE_{12}$.
Suppose that a lift $X$ commutes projectively with $s^m$. There is a scalar
$\zeta$ such that $X(I_n+mE_{12})X^{-1}=\zeta(I_n+mE_{12})$. The spectrum of the
left-hand side is $\{1\}$, whereas the spectrum of the right-hand side is
$\{\zeta\}$. Thus $\zeta=1$, and then $XE_{12}X^{-1}=E_{12}$. Hence $X$ commutes with
$I_n+E_{12}$, so the corresponding element belongs to $C_\Gamma(s)$. The reverse
inclusion is immediate.
\end{proof}
\begin{corollary}\label{cor:three-centralizers}
Suppose that $p,q\in\Gamma$ are $\Q$--generic and $\R$--regular and that
$q=ps^m$ for some nonzero integer $m$. Then
\[
 C_\Gamma(p)\cap C_\Gamma(s)
 =C_\Gamma(p)\cap C_\Gamma(q)
 =C_\Gamma(s)\cap C_\Gamma(q)
 =\{e\}.
\]
\end{corollary}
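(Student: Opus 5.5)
The three equalities will be proved one at a time. Two of them come from Lemma~\ref{lem:nontrivial-centralizer-point}, and the third is a short centralizer argument using Lemma~\ref{lem:scalar-rigidity}.

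\emph{First and third equalities.} The equality $C_\Gamma(p)\cap C_\Gamma(s)=\{e\}$ is exactly the second assertion of Lemma~\ref{lem:nontrivial-centralizer-point} applied to $p$. Since $q$ is also $\Q$--generic and $\R$--regular, the same lemma applied to $q$ gives $C_\Gamma(q)\cap C_\Gamma(s)=\{e\}$.

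\emph{Second equality.} Let $z\in C_\Gamma(p)\cap C_\Gamma(q)$. Because $q=ps^m$, we have $s^m=p^{-1}q$, and $z$ commutes with both $p^{-1}$ and $q$. Hence $z$ commutes with $s^m$, so $z\in C_\Gamma(s^m)$. Since $m\neq0$, Lemma~\ref{lem:transvection-powers} gives $C_\Gamma(s^m)=C_\Gamma(s)$, so $z\in C_\Gamma(p)\cap C_\Gamma(s)$. By the first equality this intersection is trivial, so $z=e$.

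\emph{Where care is needed.} There is essentially no obstacle here. The only point to watch is that commutation is taken in $\Gamma$, i.e.\ projectively, rather than for matrices. The argument above uses only group identities in $\Gamma$, so no lifting issue arises. Moreover, Lemma~\ref{lem:transvection-powers} already handles the projective scalar for $s^m$.
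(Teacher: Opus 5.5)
Your proof is correct and essentially the paper's: the first two equalities are argued exactly as the paper does. For the third, the paper notes that $z\in C_\Gamma(s)\cap C_\Gamma(q)$ centralizes $q s^{-m}=p$ and reduces to the first equality, whereas you apply Lemma~\ref{lem:nontrivial-centralizer-point} directly to $q$. Both are valid; the paper's route shows in addition that the genericity and regularity hypotheses on $q$ are not actually needed for this corollary.
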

\begin{proof}
The first equality is Lemma~\ref{lem:nontrivial-centralizer-point}. If
$z\in C_\Gamma(p)\cap C_\Gamma(q)$, then $z$ centralizes $p^{-1}q=s^m$, so
Lemma~\ref{lem:transvection-powers} gives
$z\in C_\Gamma(p)\cap C_\Gamma(s)$, and hence $z=e$. If
$z\in C_\Gamma(s)\cap C_\Gamma(q)$, then it centralizes both $s^m$ and $q$,
hence also $q(s^m)^{-1}=p$; thus $z\in C_\Gamma(s)\cap C_\Gamma(p)$, and
again $z=e$.
\end{proof}
\subsection{Boundary averaging for generic elements}
This is the last preliminary, and it is the source of every element we shall
extract in Section~\ref{sec:main}. More precisely, for a $\Q$--generic,
$\R$--regular element $p$, the element $E_{C_\Gamma(p)}(a)$ lies in the
norm-closed convex hull of the conjugates
$\lambda(p^k)a\lambda(p^{-k})$. Since a $\Gamma$--invariant subalgebra is
stable under conjugation and norm closed, the expectation therefore maps it
into itself, and that is what lets us move an arbitrary element of $A$ onto a
centralizer without leaving $A$.
Let $X_n=\mathbb P^{n-1}(\R)$. Let $p\in\Gamma$ be $\Q$--generic and
$\R$--regular, and fix a lift $P$. Label its eigenvalues so that
$|\alpha_1|>\cdots>|\alpha_n|$. Choose an eigenvector $v_i$ for
$\alpha_i$, put $x_i=[v_i]\in X_n$, and set
$K(p)=\{x_1,\ldots,x_n\}$.
\begin{lemma}\label{lem:attracting-set}
For every $\mu\in\Prob(X_n)$, the sequence $(p^k)_*\mu$ converges weak-* to
a probability measure supported on $K(p)$.
\end{lemma}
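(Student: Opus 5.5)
Since $p$ is $\R$--regular, a lift $P$ is diagonalizable over $\R$ in the real eigenbasis $v_1,\ldots,v_n$, with $|\alpha_1|>\cdots>|\alpha_n|$. The plan is to describe the dynamics of $P$ on $X_n$ pointwise. We then pass to measures by dominated convergence.

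\emph{Pointwise limit.} For $x=[w]\in X_n$, write $w=\sum_i c_iv_i$ and let $i(x)=\min\{i:c_i\neq0\}$. Then
\[
P^kw=\alpha_{i(x)}^k\Bigl(c_{i(x)}v_{i(x)}+\sum_{i>i(x)}c_i(\alpha_i/\alpha_{i(x)})^kv_i\Bigr),
\]
and since $|\alpha_i/\alpha_{i(x)}|<1$ for $i>i(x)$, the class $[P^kw]$ converges in $X_n$ to $x_{i(x)}$. Define $\Phi:X_n\to K(p)$ by $\Phi(x)=x_{i(x)}$; this map is Borel. The level sets $\{i(x)\geq j\}$ are the projectivizations of the subspaces $\Span\{v_j,\ldots,v_n\}$, which are closed, so the sets $\{i(x)=j\}$ are differences of closed sets.

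\emph{Passage to measures.} Let $f\in C(X_n)$. Then $\int f\,d(p^k)_*\mu=\int f(p^kx)\,d\mu(x)$. The integrand converges pointwise to $f(\Phi(x))$ and is bounded by $\|f\|_\infty$. By dominated convergence, the integrals converge to $\int f\circ\Phi\,d\mu=\int f\,d(\Phi_*\mu)$. Hence $(p^k)_*\mu\to\Phi_*\mu$ weak-*. The limit $\Phi_*\mu$ is a probability measure supported on $K(p)$, with mass $\mu\bigl(\{i(x)=j\}\bigr)$ at $x_j$.

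The only point needing care is that the convergence $p^kx\to\Phi(x)$ is not uniform near the lower-dimensional strata. This causes no difficulty, because dominated convergence needs only pointwise convergence. Note also that $\Q$--genericity is not used here; only $\R$--regularity, which gives a real eigenbasis with strictly ordered moduli, is needed.
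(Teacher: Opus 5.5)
Your proposal is correct and follows essentially the same route as the paper. The paper partitions $X_n$ into the strata $Y_i=\mathbb P(V_i)\setminus\mathbb P(V_{i+1})$ with $V_i=\Span\{v_i,\ldots,v_n\}$; these are exactly your level sets $\{i(x)=i\}$. It then proves the pointwise convergence $p^kx\to x_i$ on $Y_i$ and uses dominated convergence to identify the limit as $\sum_i\mu(Y_i)\delta_{x_i}$, which is your $\Phi_*\mu$.
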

\begin{proof}
For $1\leq i\leq n$, put
$V_i=\operatorname{span}_{\R}\{v_i,\ldots,v_n\}$, with $V_{n+1}=\{0\}$,
and put $Y_i=\mathbb P(V_i)\setminus\mathbb P(V_{i+1})$. These sets form a
Borel partition of $X_n$. If
$x=[a_iv_i+\cdots+a_nv_n]\in Y_i$, where $a_i\neq0$, then
\[
 p^kx=
 \left[v_i+\sum_{j>i}\frac{a_j}{a_i}
 \left(\frac{\alpha_j}{\alpha_i}\right)^kv_j\right]
 \longrightarrow x_i.
\]
For $f\in C(X_n)$, dominated convergence gives
 $\int_{X_n}f\,d((p^k)_*\mu)
 \longrightarrow
 \sum_{i=1}^n\mu(Y_i)f(x_i)$.
Thus $(p^k)_*\mu$ converges to
$\sum_{i=1}^n\mu(Y_i)\delta_{x_i}$.
\end{proof}
\begin{lemma}
\label{lem:centralizer-boundary}
For $t\in\Gamma$, one has $tK(p)\cap K(p)\neq\varnothing$ if and only if
$t\in C_\Gamma(p)$.
\end{lemma}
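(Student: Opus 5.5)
The easy direction comes first. If $t\in C_\Gamma(p)$, lift to $T$ and $P$; by Lemma~\ref{lem:scalar-rigidity}(2) the matrices $T$ and $P$ commute. Hence $T$ preserves each eigenspace of $P$, and since the eigenvalues are distinct these eigenspaces are the lines $\CC v_i$. As $T$ is real and $v_i$ is real (because $p$ is $\R$--regular, all $\alpha_i$ are real), we get $Tv_i\in\R v_i$. So $t$ fixes every $x_i$, and in particular $tK(p)\cap K(p)=K(p)\neq\varnothing$.

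For the converse, suppose $tx_i=x_j$ for some $i,j$, so $Tv_i=c\,v_j$ with $c\neq0$. The plan is to use Galois symmetry to spread this single incidence to all eigenlines. First, normalise the eigenvectors algebraically. Since $\chi_P$ is irreducible, we may take $v_i=w(\alpha_i)$, where $w(t)\in\Q(t)^n$ is a fixed vector such that $w(\alpha)$ spans $\ker(P-\alpha I)$ over $\Q(\alpha)$; concretely, take a column of the adjugate of $P-tI$, or the vector $\bigl(f_k(t)\bigr)$ built via the cyclic-vector basis of Lemma~\ref{lem:scalar-rigidity}(3). With this normalisation, $\sigma(v_i)=v_{\pi_\sigma(i)}$ for every $\sigma\in\Gal(L/\Q)$, coordinatewise. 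The relation $Tv_i=c\,v_j$ gives $c\in L$, and $T$ has integer entries, so applying $\sigma$ yields $Tv_{\pi_\sigma(i)}=\sigma(c)\,v_{\pi_\sigma(j)}$.

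Now split into two cases. If $i=j$, then $T$ fixes $x_{\pi_\sigma(i)}$ for every $\sigma$. By transitivity (Lemma~\ref{lem:galois}(1)), $T$ then preserves every eigenline, so $T$ is diagonal in the eigenbasis and commutes with $P$, giving $t\in C_\Gamma(p)$. If $i\neq j$, Lemma~\ref{lem:galois}(2) together with surjectivity onto $S_n$ gives more than $T$ can satisfy. Choose $\sigma$ with $\pi_\sigma(i)=i$ and $\pi_\sigma(j)=k$ for some $k\notin\{i,j\}$; this is possible because $n\geq3$. Then $Tv_i$ is proportional both to $v_j$ and to $v_k$, which contradicts the linear independence of $v_j$ and $v_k$. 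Hence only the case $i=j$ occurs.

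The main obstacle is the Galois-equivariant choice of eigenvectors. One must make sure the vector $w(t)$ does not vanish at the roots. Using the cyclic basis: fix $0\neq u\in\Q^n$; the vectors $u,Pu,\dots,P^{n-1}u$ form a basis, and in these coordinates the $\alpha$--eigenvector can be written with entries that are polynomials in $\alpha$ with rational coefficients and leading entry $1$. This handles it uniformly, because all the $\alpha_i$ are roots of the same irreducible polynomial, so one formula serves for every $i$. Everything else is bookkeeping with projective versus linear commutation, which Lemma~\ref{lem:scalar-rigidity} already handles.
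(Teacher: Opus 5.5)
Your proof is correct and follows the paper's argument. Galois conjugation commutes with the rational matrix $T$; an automorphism fixing $i$ but moving $j$ (available since $n\geq3$ and the Galois image is $S_n$) rules out $i\neq j$; transitivity then forces $T$ to fix every eigenline, hence to commute with $P$. The differences are cosmetic: the paper works projectively with $\sigma x_i=x_{\pi_\sigma(i)}$, so your Galois-equivariant normalisation $v_i=w(\alpha_i)$ is harmless but unnecessary (any eigenvector in $L^n$ is sent by $\sigma$ to an eigenvector for $\alpha_{\pi_\sigma(i)}$), and the paper handles $i\neq j$ via $\mathcal G_i\leq\mathcal G_j$ and equal orders rather than exhibiting the automorphism directly.
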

\begin{proof}
Suppose first that $t\in C_\Gamma(p)$. Choose lifts $T,P\in\SL_n(\Z)$.
By Lemma~\ref{lem:scalar-rigidity}, the matrices $T$ and $P$ commute. Since
$P$ has distinct eigenvalues, $T$ preserves every eigenline of $P$. Hence
$tK(p)=K(p)$.
Conversely, suppose that $tx_i=x_j$ for some $i,j$. Work over the splitting
field $L$ of $\chi_P$. Since a lift of $t$ has rational entries, it commutes
with the coordinatewise action of $\Gal(L/\Q)$. Let
$\mathcal G_i\leq\Gal(L/\Q)\cong S_n$ be the stabilizer of $i$. If
$\sigma\in\mathcal G_i$, then
$\sigma x_j=\sigma(tx_i)=t(\sigma x_i)=tx_i=x_j$. Thus
$\mathcal G_i\leq\mathcal G_j$. Both groups have order $(n-1)!$, so they
are equal. By Lemma~\ref{lem:galois}(2) this forces $i=j$.
Fix $a\in\{1,\ldots,n\}$ and choose $\sigma\in\Gal(L/\Q)$ with
$\pi_\sigma(i)=a$. Then $tx_a=t(\sigma x_i)=\sigma(tx_i)=\sigma x_i=x_a$.
Thus $t$ fixes every eigenline of $P$. A lift of $t$ is diagonal in an
eigenbasis of $P$, so it commutes with $P$. Hence $t\in C_\Gamma(p)$.
\end{proof}
\begin{proposition}
\label{prop:generic-centralizer}
Let $p\in\Gamma$ be $\Q$--generic and $\R$--regular. Then, for every
$a\in C_r^*(\Gamma)$,
 $E_{C_\Gamma(p)}(a)\in
 \overline{\operatorname{conv}}^{\,\|\cdot\|}
 \bigl\{\lambda(p^k)a\lambda(p^{-k}):k\in\mathbb N\bigr\}$.
Consequently, if $A\subseteq C_r^*(\Gamma)$ is a $\Gamma$--invariant
$C^*$--subalgebra, then $E_{C_\Gamma(p)}(A)\subseteq A$.
\end{proposition}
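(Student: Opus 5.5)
Following the standard route of Haagerup and of Kalantar--Panagopoulos, we use boundary measures to build states and then a Hahn--Banach separation argument. Suppose the conclusion fails for some $a\in C_r^*(\Gamma)$. Write $\mathcal C=\overline{\operatorname{conv}}\{\lambda(p^k)a\lambda(p^{-k}):k\in\mathbb N\}$. If $E_{C_\Gamma(p)}(a)\notin\mathcal C$, Hahn--Banach gives a bounded functional $\phi$ on $C_r^*(\Gamma)$ with
\[
\operatorname{Re}\phi(E_{C_\Gamma(p)}(a))>\sup_{y\in\mathcal C}\operatorname{Re}\phi(y).
\]
We aim for a contradiction by producing, for each such $\phi$, a limit point of the functionals $\phi\circ\Ad\lambda(p^k)$ (or of their Ces\`aro averages) that agrees with $\phi\circ E_{C_\Gamma(p)}$ on $a$.

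\textbf{Step 1: reduce to finitely supported $a$ and to states.} Since $E_{C_\Gamma(p)}$ and all conjugations are contractive, it suffices to treat $a$ in the dense span of the $\lambda(t)$. We will also need to pass from arbitrary $\phi$ to states: write $\phi$ as a combination of four states, or argue instead with the Jordan decomposition. This forces us to handle convergence for every state $\psi$ of $C_r^*(\Gamma)$ simultaneously.

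\textbf{Step 2: averaging of states via the boundary.} This is Haagerup's trick. Fix a state $\psi$ and a $\Gamma$--equivariant u.c.p.\ map into $C(X_n)$, namely the Poisson/Furstenberg boundary structure of $\SL_n(\R)/P$. Since $\mathbb P^{n-1}(\R)$ is a $\Gamma$--boundary, it is amenable as a $\Gamma$--space. The reduced crossed product $C(X_n)\rtimes_r\Gamma$ contains $C_r^*(\Gamma)$, and $\psi$ extends to a state $\tilde\psi$ on it. Let $\mu$ be the restriction of $\tilde\psi$ to $C(X_n)$. Pass along a subsequence to a weak-* limit $\tilde\psi_\infty$ of $\tilde\psi\circ\Ad\lambda(p^k)$. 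By Lemma~\ref{lem:attracting-set}, the restriction of $\tilde\psi_\infty$ to $C(X_n)$ is $\nu=\sum_i\mu(Y_i)\delta_{x_i}$, which is supported on $K(p)$.

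\textbf{Step 3: vanishing off the centralizer.} We claim that for $t\notin C_\Gamma(p)$ one has $\tilde\psi_\infty(\lambda(t))=0$. The standard argument goes as follows. By Lemma~\ref{lem:centralizer-boundary}, $tK(p)\cap K(p)=\varnothing$, so choose $f\in C(X_n)$ with $0\le f\le1$, $f=1$ on $K(p)$, and $f\cdot(t.f)=0$. Since $\nu(f)=1$, $f$ lies in the multiplicative domain of $\tilde\psi_\infty$. Then
\[
\tilde\psi_\infty(\lambda(t))=\tilde\psi_\infty(f\lambda(t)f)=\tilde\psi_\infty(f(t.f)\lambda(t))=0.
\]
The points $\lambda(t)$ for $t\in C_\Gamma(p)$ are fixed by $\Ad\lambda(p^k)$. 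Combining these gives $\tilde\psi_\infty(a)=\psi(E_{C_\Gamma(p)}(a))$ for finitely supported $a$.

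\textbf{Step 4: upgrade to norm convex hull.} This is the expected main obstacle, since a subsequence limit depends on $\psi$ and we cannot simply pass from a single limit to the whole convex hull. We avoid it as follows. For fixed $a$, the separation functional $\phi$ is fixed, so we only need one limit functional, but it must work for $\phi$ itself. Write $\phi=\sum_{j=1}^4 c_j\psi_j$ and choose a common subsequence $k_l$ along which all four functionals converge (by diagonal extraction). Step 3 applied to each $\psi_j$ then gives
\[
\phi(\lambda(p^{k_l})a\lambda(p^{-k_l}))\to\phi(E_{C_\Gamma(p)}(a)),
\]
which contradicts the strict separation. The Step 1 approximation passes to all $a$ because every map involved is contractive.

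For the consequence, a $\Gamma$--invariant $C^*$--subalgebra $A$ is closed, convex and stable under $\Ad\lambda(p^k)$. Hence $\mathcal C\subseteq A$, and so $E_{C_\Gamma(p)}(a)\in A$ for every $a\in A$.
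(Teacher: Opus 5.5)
Your proof is correct and is essentially the paper's argument: Hahn--Banach separation, extension to $C(X_n)\rtimes_r\Gamma$, splitting into four states with a common convergent subsequence, localization of the limit measure on $K(p)$, and vanishing on $\lambda(t)$ for $t\notin C_\Gamma(p)$ via Lemma~\ref{lem:centralizer-boundary}. Your multiplicative-domain step with $f\cdot(t.f)=0$ simply repackages the paper's two Cauchy--Schwarz estimates.

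There are two harmless slips. First, $\Ad\lambda(p^k)$ acts on $C(X_n)$ through $(p^{-k})_*\mu$, so Lemma~\ref{lem:attracting-set} must be applied to $p^{-1}$; the support is still $K(p^{-1})=K(p)$, but your formula for $\nu$ using the $Y_i$ of $p$ is wrong. Second, drop the asides about an equivariant u.c.p.\ map and amenability: $\Gamma\curvearrowright\mathbb P^{n-1}(\R)$ is in fact not amenable for $n\geq3$, since stabilizers of rational lines contain $\SL_{n-1}(\Z)$. Nothing in your argument uses it, because $C_r^*(\Gamma)\subseteq C(X_n)\rtimes_r\Gamma$ holds regardless.
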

\begin{proof}
Write $K=K(p)$. The element $p^{-1}$ is again $\Q$--generic and
$\R$--regular, and it has the same eigenlines as $p$. Thus
$K(p^{-1})=K$. By the Hahn--Banach separation theorem, it is enough to prove
the following claim. For every bounded linear functional
$\varphi$ on $C_r^*(\Gamma)$, there are a subsequence $(k_j)$ of $\mathbb N$
and a bounded linear functional $\psi$ on $C_r^*(\Gamma)$ such that
\begin{equation}\label{eq:limit-functional}
 \psi(x)=\lim_{j\to\infty}
 \varphi\bigl(\lambda(p^{k_j})x\lambda(p^{-k_j})\bigr)
 \qquad\text{for every }x\in C_r^*(\Gamma),
\end{equation}
and such that $\psi=\varphi\circ E_{C_\Gamma(p)}$.
Consider the reduced crossed product $C(X_n)\rtimes_r\Gamma$ and identify
$C_r^*(\Gamma)$ with the canonical subalgebra generated by the implementing
unitaries. The covariance relation is
$\lambda(t)f\lambda(t)^*=t\cdot f$, where
$(t\cdot f)(x)=f(t^{-1}x)$; see
Brown--Ozawa~\cite[Chapter~4]{BrownOzawa}. By the Hahn--Banach theorem, extend $\varphi$ to a bounded linear functional
$\eta$ on $C(X_n)\rtimes_r\Gamma$. Write
$ \eta=c_1\omega_1-c_2\omega_2+ic_3\omega_3-ic_4\omega_4$,
where $\omega_1,\ldots,\omega_4$ are states and $c_1,\ldots,c_4\ge0$.
For $k\geq1$ and $1\leq i\leq4$, put
$\omega_i^{(k)}=\omega_i\circ\Ad(\lambda(p^k))$. Passing successively to four subsequences, we may choose a single
subsequence $(k_j)$ such that $\omega_i^{(k_j)}$ converges weak-* to a state
$\omega_i'$ for every $1\leq i\leq4$. Let $\nu_i=\omega_i|_{C(X_n)}$. For $f\in C(X_n)$,
\[
 \omega_i^{(k)}(f)=\omega_i(p^k\cdot f)
 =\int_{X_n}f(p^{-k}x)\,d\nu_i(x).
\]
Lemma~\ref{lem:attracting-set}, applied to $p^{-1}$, shows that the
restriction of $\omega_i'$ to $C(X_n)$ is a probability measure supported on
$K$. Let $t\in\Gamma\setminus C_\Gamma(p)$. By
Lemma~\ref{lem:centralizer-boundary}, the finite sets $K$ and $tK$ are
disjoint. Choose $f\in C(X_n)$ such that $0\leq f\leq1$, $f=1$ on $K$, and
$f=0$ on $tK$. Since $\omega_i'(1-f)=0$, the Cauchy--Schwarz inequality for
the state $\omega_i'$ gives
$\omega_i'((1-f)\lambda(t))=0$. Hence
$\omega_i'(\lambda(t))=\omega_i'(f\lambda(t))$. A second application of
Cauchy--Schwarz gives
\[
 \bigl|\omega_i'(f\lambda(t))\bigr|^2
 \leq\omega_i'(\lambda(t)^*f\lambda(t))
 =\omega_i'(t^{-1}\cdot f)=0,
\]
because $t^{-1}\cdot f$ vanishes on $K$. Therefore
$\omega_i'(\lambda(t))=0$. If $t\in C_\Gamma(p)$, then $p^ktp^{-k}=t$ for every $k$. Hence
$\omega_i^{(k)}(\lambda(t))=\omega_i(\lambda(t))$, and therefore
$\omega_i'(\lambda(t))=\omega_i(\lambda(t))$.
Put
$\omega=c_1\omega_1'-c_2\omega_2'+ic_3\omega_3'-ic_4\omega_4'$ and let
$\psi$ be the restriction of $\omega$ to $C_r^*(\Gamma)$. Since $\eta$
restricts to $\varphi$ on $C_r^*(\Gamma)$, equation
\eqref{eq:limit-functional} holds. The preceding two paragraphs show that
$\psi(\lambda(t))=\varphi(\lambda(t))$ for $t\in C_\Gamma(p)$ and
$\psi(\lambda(t))=0$ for $t\notin C_\Gamma(p)$. This is precisely the action
of $\varphi\circ E_{C_\Gamma(p)}$ on the canonical unitaries. Since their
linear span is norm dense in $C_r^*(\Gamma)$, we obtain
$\psi=\varphi\circ E_{C_\Gamma(p)}$, proving the claim.
Now fix $a\in C_r^*(\Gamma)$ and let
 $\mathcal C=
 \overline{\operatorname{conv}}^{\,\|\cdot\|}
 \bigl\{\lambda(p^k)a\lambda(p^{-k}):k\in\mathbb N\bigr\}$.
A standard Hahn-Banach separation argument shows that $E_{C_\Gamma(p)}(a)\in\mathcal C$.
Finally, let $A$ be a $\Gamma$--invariant $C^*$--subalgebra and let
$a\in A$. Every conjugate $\lambda(p^k)a\lambda(p^{-k})$ belongs to $A$.
Since $A$ is convex and norm closed, it contains their norm-closed convex
hull. Therefore $E_{C_\Gamma(p)}(a)\in A$.
\end{proof}
\section{Proof of the main result}\label{sec:main}
We begin by proving an escape result. Apart from the terms which commute with $s$, nothing stays
inside $C_\Gamma(q)$ for infinitely many $k$. The proof is elementary because
$s$ is unipotent, so that $s^{\pm k}$ depends polynomially on $k$ and the whole
question becomes one about polynomials in one variable.
\begin{lemma}\label{lem:polynomial-escape}
Let $q\in\Gamma$ be $\Q$--generic and $\R$--regular, and fix $c\in\Gamma$ and
$t\in\Gamma\setminus C_\Gamma(s)$. Then $cs^{-k}ts^k\in C_\Gamma(q)$ for only
finitely many integers $k\geq0$.
\end{lemma}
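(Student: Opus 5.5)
The plan is to translate membership in $C_\Gamma(q)$ into a polynomial identity in $k$, and then use that $s^{\pm k}=I_n\pm kE_{12}$ depends affinely on $k$. Fix lifts $Q,C,T\in\SL_n(\Z)$ and put $S=I_n+E_{12}$, so that $S^{\pm k}=I_n\pm kE_{12}$. Then
\[
M(k):=CS^{-k}TS^k=C(I_n-kE_{12})T(I_n+kE_{12})
\]
is a matrix whose entries are polynomials in $k$ of degree at most $2$. By Lemma~\ref{lem:scalar-rigidity}(2), applied to the $\Q$--generic and $\R$--regular element $q$, the element $cs^{-k}ts^k$ lies in $C_\Gamma(q)$ if and only if the matrix $M(k)$ commutes with $Q$. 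The commutator entries $[M(k),Q]_{ab}$ are polynomials in $k$. So the set of $k$ with $cs^{-k}ts^k\in C_\Gamma(q)$ is either finite, or all these polynomials vanish identically. We argue by contradiction and assume that $[M(k),Q]=0$ as a polynomial identity in $k$, hence for every $k\in\Z$, and even for every real or complex value of the variable.

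Next we extract information from the identity. Multiplying on the left by $C^{-1}$ does not preserve commutation with $Q$, so we work directly with the coefficients. Write
\[
M(k)=CT+k\,C(TE_{12}-E_{12}T)-k^2\,CE_{12}TE_{12}.
\]
Each coefficient matrix must commute with $Q$, so each lies in $C_{M_n(\Q)}(Q)=\Q[Q]$ by Lemma~\ref{lem:scalar-rigidity}(3). The constant term gives $CT\in\Q[Q]$. Set $D_1=C(TE_{12}-E_{12}T)$ and $D_2=CE_{12}TE_{12}$, both in $\Q[Q]$. The algebra $\Q[Q]$ is a field, since $\chi_Q$ is irreducible. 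Hence every nonzero element of $\Q[Q]$ is invertible, and in particular has full rank $n$.

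We now compare ranks. The matrix $D_2=C\,E_{12}TE_{12}$ has rank at most $1$, because $E_{12}$ has rank $1$. Since $n\ge3$, the field property forces $D_2=0$, that is, $E_{12}TE_{12}=0$; concretely $T_{21}=0$. Similarly, $TE_{12}-E_{12}T$ has rank at most $2<n$ when $n\geq3$, so $D_1=0$ and $T$ commutes with $E_{12}$. Then $T$ commutes with $S$, so $t\in C_\Gamma(s)$, which contradicts the hypothesis.

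One detail needs care: projective versus matrix commutation of $M(k)$ with $Q$. Lemma~\ref{lem:scalar-rigidity}(1)--(2) already guarantees that any lift of an element centralizing $q$ commutes with $Q$ as a matrix, so no scalar $\zeta$ depending on $k$ appears. The main point I expect to need checking is this passage from ``infinitely many integer $k$'' to the coefficientwise identity, which is just the fact that a nonzero polynomial has finitely many roots, together with the rank-versus-field argument, which uses $n\ge3$ for the rank-$2$ term.
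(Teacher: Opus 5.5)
Your proof is correct, but it takes a different route from the paper's.

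\textbf{The paper's route.} The paper diagonalizes $Q$ over $\CC$ by some $U$ and sets $B(z)=U^{-1}C(I_n-zE_{12})T(I_n+zE_{12})U$. Since $B(k)$ is diagonal for infinitely many $k$, it concludes that $B(z)$ is diagonal for all $z$. It then uses that $\det B(z)=\det C\det T$ is a nonzero constant, which forces each diagonal polynomial entry $b_i(z)$ to be constant. Comparing $z=0$ and $z=1$ gives $S^{-1}TS=T$.

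\textbf{Your route.} You stay over $\Q$. You expand $CS^{-k}TS^k$ as a quadratic in $k$ with rational matrix coefficients and observe that each coefficient must lie in $C_{M_n(\Q)}(Q)=\Q[Q]$. This is a field because $\chi_Q$ is irreducible. You then kill the linear coefficient $C(TE_{12}-E_{12}T)$ by a rank count, since its rank is at most $2<n$.

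\textbf{What each buys.} The paper's argument needs only that the centralizer of $q$ in $\PGL_n(\CC)$ is the diagonal torus of an eigenbasis. That is, it uses regular semisimplicity plus the scalar rigidity of Lemma~\ref{lem:scalar-rigidity}(1), and never uses irreducibility of $\chi_Q$ or $n\geq3$. A rank argument could not work over $\CC$: the matrix centralizer of $Q$ in $M_n(\CC)$ contains nonzero singular matrices, such as diagonal projections in an eigenbasis. This is exactly why your version needs the arithmetic input of Lemma~\ref{lem:scalar-rigidity}(3) and the hypothesis $n\geq3$. In exchange, you avoid diagonalization and the determinant trick, and the argument reduces to a transparent linear-algebra check.

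A minor simplification: the constant and quadratic coefficients play no role in your argument. The vanishing of $D_1=C(TE_{12}-E_{12}T)$ alone already gives $TE_{12}=E_{12}T$.
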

\begin{proof}
Put $N=E_{12}$ and $S=I_n+N$. Since $N^2=0$, we have $S^k=I_n+kN$ and $S^{-k}=I_n-kN$ for every integer $k$.  Choose lifts $C,T,Q\in\operatorname{GL}_n(\mathbb C)$
of $c,t,q$, respectively.  Since $q$ is $\mathbb Q$--generic and $\mathbb R$--regular, a lift $Q$ of $q$
has pairwise distinct eigenvalues over $\mathbb C$, and its centralizer in
$\operatorname{PGL}_n(\mathbb C)$ is the diagonal projective torus in an
eigenbasis of $Q$.
Thus there exists
$U\in\operatorname{GL}_n(\mathbb C)$ such that $U^{-1}QU$ is diagonal and such
that an element of $\operatorname{PGL}_n(\mathbb C)$ belongs to
$C_\G(q)(\mathbb C)$ precisely when, in this basis, it is represented by a
diagonal matrix.
Suppose, towards a contradiction, that
$cs^{-k}ts^k\in C_\Gamma(q)$
for infinitely many integers $k\geq 0$.  Define
 $B(z)=U^{-1}C(I_n-zN)T(I_n+zN)U
 ~(z\in\mathbb{C}).$
Every entry of $B(z)$ is a polynomial in $z$.  For every integer $k\geq0$,
the projective class of $B(k)$ is $cs^{-k}ts^k$.  Hence, for infinitely many
such $k$, the matrix $B(k)$ is diagonal.  If $i\ne j$, then the $(i,j)$-entry
of $B(z)$ is therefore a polynomial in one variable with infinitely many
roots, hence is identically zero.  Thus $B(z)$ is diagonal for every $z\in\mathbb C$.
Write $B(z)=\operatorname{diag}(b_1(z),\ldots,b_n(z)).$
Since $N^2=0$, both $I_n-zN$ and $I_n+zN$ have determinant one.  Consequently,
 $b_1(z)\cdots b_n(z)=\det B(z)=\det(C)\det(T),$
which is a nonzero constant.  Since the $b_i(z)$ are polynomials, they are all
constant.  Hence $B(z)$, and therefore
 $C(I_n-zN)T(I_n+zN),$
are constant in $z$.  Comparing the values at $z=0$ and $z=1$ yields
$C(I_n-N)T(I_n+N)=CT.$
After cancelling $C$ and using $(I_n+N)^{-1}=I_n-N$, we obtain $S^{-1}TS=T.$
Thus $t\in C_{\Gamma}(s)$, contradicting the assumption on $t$.  Therefore,
$cs^{-k}ts^k\in C_\Gamma(q)$ for only finitely many integers $k\geq0$.
\end{proof}
\begin{remark}
    In the application $c$ will lie in $C_\Gamma(p)$, but the proof uses no
    assumption on $c$ whatsoever.
\end{remark}
We are now ready to prove the main theorem. We begin by extracting a group unitary. The idea is to approximate $\lambda(p)$ from inside $A$, to push the approximants onto the centralizers
of $p$ using Proposition~\ref{prop:generic-centralizer}, and then to
read off $\lambda(q)$ as the surviving term of the partial-matching formula. Once that is done, a Galois correspondence result completes the proof. This is Theorem~\ref{thm:main} from the introduction. Note that if $A=\mathbb{C}$, then it is $C_r^*(\{e\})$ and hence, comes from a normal subgroup.
\begin{theorem}
Let $A\subseteq C_r^*(\Gamma)$ be nontrivial, unital and $\Gamma$--invariant. Then, $A$ is of the form $C_r^*(K)$ for some normal subgroup $K$ of $\Gamma$.
\end{theorem}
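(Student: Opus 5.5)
Following the strategy announced in Section~\ref{sec:gen-strategy}, the first goal is a single nontrivial group unitary inside $A$. Since $A\neq\CC$ is $\Gamma$--invariant, $A''$ is a nontrivial $\Gamma$--invariant von Neumann subalgebra of $L(\Gamma)$. By the ISR property for $\PSL_n(\Z)$ from \cite{KalantarPanagopoulos}, $A''=L(N_0)$ for a nontrivial normal subgroup $N_0$. By Margulis' normal subgroup theorem, $N_0$ has finite index. I will also use that $A$ is $\|\cdot\|_2$--dense in $L(N_0)$ (Kaplansky density). Apply Proposition~\ref{prop:generic-pair} to $N_0$ to obtain $m\geq1$ and $\Q$--generic, $\R$--regular elements $p,q\in N_0$ with $q=ps^m$. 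By Corollary~\ref{cor:three-centralizers}, hypothesis (B) of Proposition~\ref{prop:strategy-main} holds, and by Lemma~\ref{lem:polynomial-escape} (with $c\in C_\Gamma(p)$), hypothesis (C) holds as well.

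The extraction runs as follows. Since $\lambda(p),\lambda(s^m)\in L(N_0)$ (note $s^m$ lies in $N_0$ as well, since $q,p\in N_0$), pick $a_1,a_2\in A$ with $\|a_1-\lambda(p)\|_2<\delta$ and $\|a_2-\lambda(s^m)\|_2<\delta$. Put $b=E_{C_\Gamma(p)}(a_1)$ and $x=E_{C_\Gamma(s)}(a_2)$. By Proposition~\ref{prop:generic-centralizer}, $b\in A$. Since $s$ is not $\R$--regular, $x$ need not lie in $A$, and this is exactly what Proposition~\ref{prop:strategy-main} circumvents. For each $k$, the element $b\lambda(s^{-k})a_2\lambda(s^k)$ lies in $A$. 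Because $q$ is $\Q$--generic and $\R$--regular, $E_{C_\Gamma(q)}$ maps $A$ into $A$, again by Proposition~\ref{prop:generic-centralizer}. Proposition~\ref{prop:strategy-main} then gives norm convergence to $E_{C_\Gamma(q)}(bx)$, and since $A$ is closed, $y:=E_{C_\Gamma(q)}(bx)\in A$. Expectations are $\|\cdot\|_2$--contractive and fix $\lambda(p)$ and $\lambda(s^m)$, so $\|b-\tau_0(b\lambda(p)^*)\lambda(p)\|_2\le 2\delta$, and similarly for $x$. Lemma~\ref{lem:fourier-matching}(b) therefore gives $\|y-\beta\lambda(q)\|\le4\delta^2$, with $|\beta-1|\le 2\delta+\delta^2$.

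Next I isolate $\lambda(q)$ itself. Repeat the construction for the whole family $\delta\to0$: each $y_\delta\in A$ lies within $O(\delta)$ in norm of $\lambda(q)$, and $A$ is norm closed, so $\lambda(q)\in A$. I expect this to be the delicate step, because everything must be in operator norm. Lemma~\ref{lem:fourier-matching}(b) is exactly the tool that converts the $\|\cdot\|_2$ smallness of the inputs into operator-norm smallness of the output, so this step should go through.

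Finally, with $g=q\neq e$ and $\lambda(g)\in A$, $\Gamma$--invariance puts $\lambda(hgh^{-1})$ in $A$ for every $h\in\Gamma$. Let $N$ be the normal closure of $g$. Then $C_r^*(N)\subseteq A$, and by Margulis $N$ has finite index. Write $C_r^*(\Gamma)\cong C_r^*(N)\rtimes_r\Gamma/N$ as a twisted crossed product by the finite group $\Gamma/N$. The Galois correspondence of \cite{CameronSmithGalois,Bedos1991} then gives $A=C_r^*(K)$ for some $N\le K\le\Gamma$. To see that $K$ is normal, note that $\Gamma$--invariance of $A=C_r^*(K)$ forces $\lambda(hKh^{-1})\subseteq C_r^*(K)$, and hence $hKh^{-1}\subseteq K$ by comparing supports (a group unitary lies in $C_r^*(K)$ iff its element lies in $K$, via $E_K$).
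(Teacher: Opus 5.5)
Your proposal follows the paper's proof essentially step for step. You get $A''=L(N_0)$ via Kalantar--Panagopoulos and Margulis, and take $p$ and $q=ps^m$ in $N_0$ from Proposition~\ref{prop:generic-pair}. You then take $\|\cdot\|_2$--approximants of $\lambda(p)$ and $\lambda(s^m)$ in $A$, push the first onto $C_\Gamma(p)$, conjugate the second by $s^k$, and apply $E_{C_\Gamma(q)}$ to land on $E_{C_\Gamma(q)}(bx)\in A$. Lemma~\ref{lem:fourier-matching}(b) then gives $\|y-\lambda(q)\|\le 5\delta^2+2\delta$. The extraction is correct. Your direct bound $|\beta-1|\le 2\delta+\delta^2$ also removes the need for the contractive approximants that the paper takes from Kaplansky density.

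The one place you are thinner than the paper is the final step. The Galois correspondence \cite[Theorem~4.4]{CameronSmithGalois} for the twisted crossed product $C_r^*(N)\rtimes^{v}_{\beta,r}(\Gamma/N)$ requires two hypotheses: $C_r^*(N)$ must be simple, and each $\beta_x$ with $x\neq e$ must be outer. You invoke it without checking either, and without outerness the conclusion genuinely fails (a trivial action already gives many intermediate algebras). The paper supplies the verification as follows.
\begin{enumerate}
\item $C_r^*(\Gamma)$ is simple by Bekka--Cowling--de la Harpe, so $C_r^*(N)$ is simple by \cite[Theorem~1.4]{breuillard2017c}, and in particular $N$ is i.c.c.
\item Since $N$ has finite index, $C_\Gamma(N)=\{e\}$, so for $x\neq e$ conjugation by $\rho(x)$ is an outer automorphism of the group $N$.
\item B\'edos' Lemmas~3.3--3.4 in \cite{Bedos1991} upgrade this to $\beta_x$ being freely acting, hence outer.
\end{enumerate}
You should include this check. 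Conversely, your closing argument that $K$ is normal is correct: apply $E_K$ to $\lambda(hkh^{-1})\in C_r^*(K)$. It makes explicit a point the paper leaves implicit, since the statement asks for a normal subgroup.
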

\begin{proof}
It follows from \cite{KalantarPanagopoulos} that $A''=L(N_0)$ for a nontrivial normal subgroup $N_0\triangleleft\Gamma$.\\
\noindent
\textit{Step-1:}
$A$ contains a nontrivial group element.\\
\noindent
Note that the subgroup $N_0$ has finite index, since $N_0$ is nontrivial and
Margulis's normal subgroup theorem applies~\cite[Chapter~IV]{MargulisBook}.
Choose $p,q\in N_0$ and $m\geq1$ given by
Proposition~\ref{prop:generic-pair}. Thus $q=ps^m$, and $p,q$ are
$\Q$--generic and $\R$--regular. By Corollary~\ref{cor:three-centralizers}, the three groups $C_\Gamma(p)$,
$C_\Gamma(q)$ and $C_\Gamma(s)$ have pairwise trivial intersections. It follows from Lemma~\ref{lem:polynomial-escape} that the third condition $(C)$ of Proposition~\ref{prop:strategy-main} is met and hence, all the conditions are satisfied by $p$, $q$ and $s$. Let $0<\varepsilon<3/4$.
Since $p,s^m\in N_0$, the unitaries $\lambda(p)$ and $\lambda(s^m)$ belong
to $A''=L(N_0)$. By Kaplansky's density theorem, there are sequences of contractions
$(a_j)_{j\geq1}$ and $(d_j)_{j\geq1}$ in $A$ such that
$\|a_j-\lambda(p)\|_2\to0$ and $\|d_j-\lambda(s^m)\|_2\to0$. Fix $j$ such
that
\[
 \|a_j-\lambda(p)\|_2<\frac{\varepsilon}{3}
 \qquad\text{and}\qquad
 \|d_j-\lambda(s^m)\|_2<\frac{\varepsilon}{3}.
\]
By Proposition~\ref{prop:generic-centralizer},
$E_{C_\Gamma(p)}(a_j)$ belongs to $A$. Consequently,
\begin{equation}\label{eq:b-close}
 \|E_{C_\Gamma(p)}(a_j)-\lambda(p)\|_2
 \leq\|a_j-\lambda(p)\|_2<\frac{\varepsilon}{3}.
\end{equation}
Similarly,
\begin{equation}\label{eq:x-close}
 \|E_{C_\Gamma(s)}(d_j)-\lambda(s^m)\|_2
 \leq\|d_j-\lambda(s^m)\|_2<\frac{\varepsilon}{3}.
\end{equation}
For every $k$, the element
$E_{C_\Gamma(p)}(a_j)\lambda(s^{-k})d_j\lambda(s^k)$ belongs to $A$. Since
$q$ is $\Q$--generic and $\R$--regular,
Proposition~\ref{prop:generic-centralizer} gives
$E_{C_\Gamma(q)}(E_{C_\Gamma(p)}(a_j)\lambda(s^{-k})d_j\lambda(s^k))\in A$.
By Proposition~\ref{prop:strategy-main} and the norm closedness of
$A$, we see that
\[
 \lim_{k\to\infty}
 E_{C_\Gamma(q)}\left(
 E_{C_\Gamma(p)}(a_j)\lambda(s^{-k})d_j\lambda(s^k)\right)
 =E_{C_\Gamma(q)}\left(
 E_{C_\Gamma(p)}(a_j)E_{C_\Gamma(s)}(d_j)\right)\in A.
\]
By the Cauchy--Schwarz inequality and \eqref{eq:b-close}, we see that
\begin{equation}
\label{eq:coeff-b}
\begin{aligned}
 \left|\tau_0(E_{C_\Gamma(p)}(a_j)\lambda(p)^*)-1\right|
 &=\left|\tau_0((E_{C_\Gamma(p)}(a_j)-\lambda(p))\lambda(p)^*)\right|
 \\&\leq\|E_{C_\Gamma(p)}(a_j)-\lambda(p)\|_2
 <\frac{\varepsilon}{3}.
 \end{aligned}
\end{equation}
Similarly, we see that
\begin{equation}\label{eq:coeff-x}
 \left|\tau_0(E_{C_\Gamma(s)}(d_j)\lambda(s^m)^*)-1\right|
 \leq\|E_{C_\Gamma(s)}(d_j)-\lambda(s^m)\|_2
 <\frac{\varepsilon}{3}.
\end{equation}
Moreover, using equations~\eqref{eq:b-close} and~ \eqref{eq:coeff-b}, we see that
\begin{equation}\label{eq:tail-b}
\begin{aligned}
&\left\|E_{C_\Gamma(p)}(a_j)-
 \tau_0(E_{C_\Gamma(p)}(a_j)\lambda(p)^*)\lambda(p)\right\|_2\\
&\quad\leq
 \|E_{C_\Gamma(p)}(a_j)-\lambda(p)\|_2
 +\left|1-\tau_0(E_{C_\Gamma(p)}(a_j)\lambda(p)^*)\right|
 <
 \frac{2\varepsilon}{3},
\end{aligned}
\end{equation}
and, similarly, by virtue of equations~\eqref{eq:x-close} and \eqref{eq:coeff-x}, we also have that
\begin{equation}\label{eq:tail-x}
 \left\|E_{C_\Gamma(s)}(d_j)-
 \tau_0(E_{C_\Gamma(s)}(d_j)\lambda(s^m)^*)\lambda(s^m)\right\|_2
 <
 \frac{2\varepsilon}{3}.
\end{equation}
Since $d_j$ is a contraction, one has
$|\tau_0(E_{C_\Gamma(s)}(d_j)\lambda(s^m)^*)|\leq1$, and therefore
\begin{align*}
 \left|\tau_0(E_{C_\Gamma(p)}(a_j)\lambda(p)^*)
 \tau_0(E_{C_\Gamma(s)}(d_j)\lambda(s^m)^*)-1\right|
 &\leq
 \left|\tau_0(E_{C_\Gamma(p)}(a_j)\lambda(p)^*)-1\right|
 \\&+\left|\tau_0(E_{C_\Gamma(s)}(d_j)\lambda(s^m)^*)-1\right|.
\end{align*}
Applying Lemma~\ref{lem:fourier-matching} with
$b=E_{C_\Gamma(p)}(a_j)$ and
$x=E_{C_\Gamma(s)}(d_j)$, along with equations~\eqref{eq:coeff-b},\,\eqref{eq:coeff-x},\,
 \eqref{eq:tail-b}, and \eqref{eq:tail-x}, we see that
 \[
\begin{gathered}
 \Bigl\|E_{C_\Gamma(q)}\bigl(E_{C_\Gamma(p)}(a_j)E_{C_\Gamma(s)}(d_j)\bigr)-\lambda(q)\Bigr\|\\[2pt]
 \begin{aligned}
 &\leq\Bigl\|E_{C_\Gamma(p)}(a_j)-\tau_0\bigl(E_{C_\Gamma(p)}(a_j)\lambda(p)^*\bigr)\lambda(p)\Bigr\|_2\\
 &\qquad\times
 \Bigl\|E_{C_\Gamma(s)}(d_j)-\tau_0\bigl(E_{C_\Gamma(s)}(d_j)\lambda(s^m)^*\bigr)\lambda(s^m)\Bigr\|_2\\
 &\quad+\bigl|\tau_0\bigl(E_{C_\Gamma(p)}(a_j)\lambda(p)^*\bigr)-1\bigr|
 +\bigl|\tau_0\bigl(E_{C_\Gamma(s)}(d_j)\lambda(s^m)^*\bigr)-1\bigr|\\
 &<\frac{4\varepsilon^2}{9}+\frac{\varepsilon}{3}+\frac{\varepsilon}{3}<\varepsilon .
 \end{aligned}
\end{gathered}
\]
where the last inequality uses $\varepsilon<3/4$. Every element on the
left belongs to $A$, and $\varepsilon$ was arbitrary, so
$\lambda(q)\in A$. The element $q$ is nontrivial because it is
$\R$--regular. This finishes the proof of the first step.
It follows from \textit{Step-1} that there exists $q\neq e$ such that
$\lambda(q)\in A$.  Let $N=\langle\!\langle q\rangle\!\rangle_\Gamma$ be the normal closure
of $q$ in $\Gamma$. Therefore
$C_r^*(N)\subseteq A\subseteq C_r^*(\Gamma)$. Again, by Margulis's normal subgroup theorem, $N$ has finite index in $\Gamma$.  
Let $F=\Gamma/N$, and let $\pi:\Gamma\to F$ be the quotient map. Choose
a section $\rho:F\to\Gamma$ such that $\rho(e)=e$. For $x\in F$ and
$l\in N$, let
$\beta_x(\lambda(l))
 =\lambda(\rho(x)l\rho(x)^{-1})$ and
$v(x,y)=\lambda(\rho(x)\rho(y)\rho(xy)^{-1})$.
Using~\cite[Theorem~2.1]{Bedos1991}, we see that $(\beta,v)$ is a
cocycle action of $F$ on $C_r^*(N)$ and gives an isomorphism
 $\Phi:C_r^*(N)\rtimes_{\beta,r}^{\,v}F\longrightarrow C_r^*(\Gamma)$.
This isomorphism is identity on $C_r^*(N)$ and sends the canonical
unitary associated with $x\in F$ to $\lambda(\rho(x))$.
We verify the two hypotheses needed to apply the twisted Galois
correspondence. Since $C_r^*(\Gamma)$ is simple (see~\cite{BekkaCowlingHarpe}), using \cite[Theorem~1.4]{breuillard2017c}, we see that $C_r^*(N)$ is simple.  Second, $\beta_x$ is outer for every $x\neq e$. Since $N$ has finite index in $\Gamma$, it is plump in $\Gamma$ in the sense of \cite{amrutam2021intermediate} and hence, $C_{\Gamma}(N)=\{e\}$.  Fix $x\neq e$ and put $g=\rho(x)$. Conjugation by $g$ defines an
automorphism $\theta_g$ of $N$. This automorphism is outer. Indeed, if
there were $u\in N$ such that $glg^{-1}=ulu^{-1}$ for every $l\in N$,
then $u^{-1}g\in C_\Gamma(N)=\{e\}$, and hence $g=u\in N$. This would
contradict $x\neq e$.
Since $C_r^*(N)$ is simple, the group $N$ is i.c.c. It follows from
\cite[Lemma~3.4]{Bedos1991} that for every $u\in N$,
the set
 $\{\theta_g(h)uh^{-1}:h\in N\}$
is infinite. Applying~\cite[Lemma~3.3]{Bedos1991}, we get that the induced
automorphism $\beta_x$ of $C_r^*(N)$ is freely acting. This means that the
only element $b\in C_r^*(N)$ satisfying $\beta_x(a)b=ba$ for every
$a\in C_r^*(N)$ is $b=0$. In particular, $\beta_x$ is outer. Let $\widetilde A=\Phi^{-1}(A)$.
\cite[Theorem~4.4]{CameronSmithGalois} gives a subgroup $L\leq F$ such
that
$ \widetilde A=C_r^*(N)\rtimes_{\beta,r}^{\,v}L$.
Letting $K=\pi^{-1}(L)$, it follows that
$A=C_r^*(K)$.
\end{proof}
\section*{Acknowledgements}
This paper grew out of joint work with Yongle Jiang, Yair Glasner, and Yair Hartman on boomerang
subalgebras, which will appear separately. I thank all three of them for many discussions, and Jiang in particular, who suggested running the argument through the transvection $s=I_n+E_{12}$. Sometimes one needs a friend's perspective on purpose and why we must do what we do. The author thanks Jagoda for that. 

An earlier version of this paper used a different extraction argument. Its
general strategy revolved around three conditions. First, $\mathrm{(F1)}$, requiring
the relevant centralizers to have trivial intersection; Second, $\mathrm{(F2)}$,
requiring a conjugate product to remain $\Q$--generic and $\R$--regular; and
$\mathrm{(F3)}$, requiring a certain intersection of a product of centralizers
with the centralizer of the product to be finite. The purpose of
$\mathrm{(F3)}$ was to produce an element of $A$ with finite Fourier support,
and a second turn, formulated as $\mathrm{(F4)}$, then isolated a single group
unitary. That route is recorded separately in \cite{AmrTwoTurn} and will hopefully be used elsewhere.

That earlier draft was discussed with OpenAI's ChatGPT, using GPT--5.6
Thinking, and the model proposed bypassing the finite-support argument and
$\mathrm{(F4)}$ altogether. With Jiang's transvection $s$ held fixed,
it suggested to construct $\Q$--generic $\R$--regular elements $p,q$ with $q=ps^m$, and
replace $\mathrm{(F3)}$ by an escape property together with a matched
Fourier-series argument. This suggestion led to
Proposition~\ref{prop:strategy-main} and Lemma~\ref{lem:fourier-matching},
which form the analytic core of the present proof. I used Anthropic's Claude
during the writing of the paper, for discussion of exposition and assistance
with language and \LaTeX. I developed the construction, supplied, and verified
all proofs, and checked the mathematical arguments and references, and I take
full responsibility for the contents of the paper.
\bibliographystyle{alpha}
\bibliography{ISR}
\end{document}